\newtheorem{lem}{Lemma}
\newtheorem{clm}{Claim}
\newtheorem{thm}{Theorem}
\newtheorem*{mainthm}{Main Theorem}
\newtheorem*{maincor}{Main Corollary}
\newtheorem*{thmcor}{Theorem~1}
\newtheorem{conj}{Conjecture}
\theoremstyle{definition}
\newtheorem*{remark}{Remark}
\def\vph{\varphi}
\newcommand{\aside}[1]{\marginnote{\scriptsize{#1}}[0cm]}
\newcommand{\aaside}[2]{\marginnote{\scriptsize{#1}}[#2]}
\newcommand\Emph[1]{\emph{#1}\aside{#1}}
\newcommand\EmphE[2]{\emph{#1}\aaside{#1}{#2}}
\definecolor{mypurple}{RGB}{208,134,255}
\definecolor{myblue}{RGB}{10,120,253}
\definecolor{myredred}{RGB}{163,0,0}
\colorlet{myred}{myredred!70}
\definecolor{mygreen}{RGB}{126,198,52}
\definecolor{myorange}{RGB}{244,154,33}
\title{In Most 6-regular Toroidal Graphs \\ All 5-colorings are Kempe Equivalent}
\author{Daniel W. Cranston\thanks{%
Department of Computer Science, Virginia Commonwealth
University, Richmond, VA, USA;
\texttt{dcranston@vcu.edu}
} 
\and Reem Mahmoud\thanks{%
Department of Mathematics and Applied Mathematics, Virginia Commonwealth
University, Richmond, VA, USA; 
\texttt{mahmoudr@vcu.edu}
}
}
\begin{document}
\maketitle

\begin{abstract}
A Kempe swap in a proper coloring interchanges the colors on some maximal
connected 2-colored subgraph.  Two $k$-colorings are $k$-equivalent if we can
transform one into the other using Kempe swaps.  The triangulated toroidal grid,
$T[m\times n]$, is formed from (a toroidal embedding of) the Cartesian product
of $C_m$ and $C_n$ by adding parallel diagonals inside all 4-faces.  
Mohar and Salas showed that not all 4-colorings of $T[m\times n]$ are 4-equivalent.
In contrast, Bonamy, Bousquet, Feghali, and Johnson showed that all 6-colorings of
$T[m\times n]$ are 6-equivalent.  They asked whether the same is true for 5-colorings.
We answer their question affirmatively when $m,n\ge 6$.
Further, we show that if $G$ is 6-regular with a toroidal embedding where every
non-contractible cycle has length at least 7, then all 5-colorings of $G$ are 5-equivalent.  
Our results relate to the antiferromagnetic Pott's model in
statistical mechanics.
\end{abstract}

\section{Introduction}
Given a $k$-coloring $\vph$ of a graph $G$ and colors $\alpha, \beta
\in\{1,\ldots,k\}$, a \Emph{Kempe swap} recolors a component of the subgraph
induced by colors $\alpha$ and $\beta$, interchanging those colors on that component,  
which is called a \EmphE{Kempe component}{-2mm}.
Two $k$-colorings, $\vph_1$ and $\vph_2$ of a graph $G$ are (Kempe)
\EmphE{$k$-equivalent}{4mm}, if there exists a sequence
of $k$-colorings of $G$, beginning with $\vph_1$ and ending with $\vph_2$, such
that each two successive colorings 
differ by only a single Kempe swap.  
Problems regarding the $k$-equivalence of $k$-colorings often go by the name
\emph{reconfiguration}.  We form an auxilliary graph $H$ which has as its
vertices all $k$-colorings of $G$, and two vertices of $H$ are adjacent if the
corresponding colorings differ by only a single Kempe swap. Previous work has
focused on deciding whether $H$ is connected~\cite{mohar,FJP,BBFJ,CvdHJ,CvdHJ2} 
and (when $H$ is connected) on
determining or bounding the diameter of $H$~\cite{BJLPP, BP, feghali-paths, feghali-JCTB}.

The \Emph{$k$-state Pott's model} is among those most widely studied in
statistical mechanics, offering insight ino many areas of solid-state physics.
The model assigns to each vertex  $v$ of an underlying graph $G$ a spin
$\sigma(v)\in\{1,\ldots,k\}$.  So each spin of $G$ corresponds to a $k$-coloring
of $G$, not necessarily proper.
In the \emph{ferromagnetic model}\aside{(anti)ferro-magnetic model}, the energy of a spin is low (meaning the spin is
more likely) when most edges have endpoints  with the same color.  In the
\emph{antiferromagnetic model}, which we are concerned with, the energy of a spin is
low when most edges have endpoints with distinct colors.  When the temperature
is 0, every edge has endpoints with distinct colors, so the possible spins of
$G$ are precisely its proper $k$-colorings.

\begin{figure}[!t]
\centering
\begin{tikzpicture}[scale=1.1]
\clip(0,0) rectangle (8.0,5.75);  
\begin{scope}
\clip(0,0) rectangle (8.0,5.5);  
\tikzstyle{uStyle}=[shape = circle, minimum size = 8pt, inner sep =
2.5pt, outer sep = 0pt, draw, fill=white]

\foreach \i in {1,...,5}
\draw[thick] (.5,\i) -- (7.5,\i);

\foreach \i in {1,...,7}
\draw[thick] (\i,.5) -- (\i,5.5);

\foreach \b in {.5, 1.5, ..., 5.5}
\draw[thick] (.5,\b) -- (6-\b, 5.5);

\foreach \b in {.5, 1.5, ..., 7.5}
\draw[thick] (\b,.5) -- (7.5, 8-\b);

\foreach \x in {1, ..., 7}
{
\foreach \y in {1, ..., 5}
{
\draw (\x,\y) node[uStyle] {};
}
}

\draw (.65,5.25) node[draw=none] {\scriptsize{(1,1)}};
\draw (7.35,.75) node[draw=none] {\scriptsize{$(5,7)$}};

\end{scope}
\end{tikzpicture}
\caption{A triangulated toroidal grid $T[5\times 7]$.%
\label{tri-toroidal-fig}}
\end{figure}
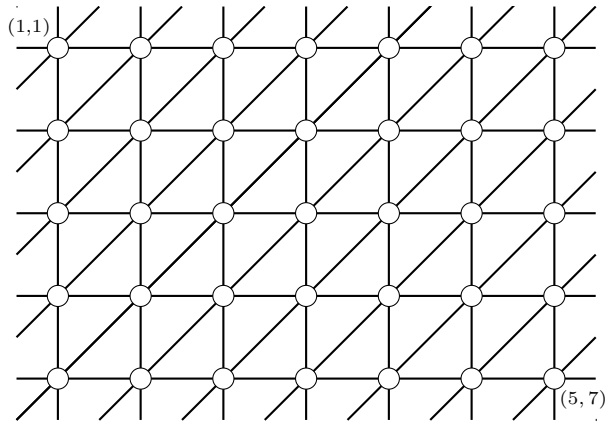

A popular way to simulate the evolution of a graph's spin over time is using the
Wang--Swendsen--Koteck\'{y} (WSK) non-local cluster dynamics.  This is a Markov
Chain Monte Carlo algorithm for the antiferromagnetic Pott's model.  For the WSK
algorithm to function properly, we need that each spin of the graph has positive
probability of eventually transforming to each other spin, i.e., the Markov
chain must be ergodic (connected).  Two states in the Markov chain will be
adjacent exactly when they differ by a single Kempe swap.  Thus, in the language
of graph coloring, ergodicity requires that every two proper $k$-colorings of
$G$ are $k$-equivalent.  This is not true in general, so physicists typically
study highly structured graphs, often with a grid-like structure on the plane
or torus.  (More details of the Pott's model are given in~\cite{MS}.)

The $a\times b$
\EmphE{toroidal grid}{0mm} is the cartesian product of cycles of lengths $a$ and $b$.
The $a\times b$ \EmphE{triangulated toroidal grid}{3mm}, $T[a\times b]$, is
formed from the toroidal grid by adding a diagonal inside each 4-face, so that
all diagonals are parallel; see Figure~\ref{tri-toroidal-fig}.  Clearly each
$T[a\times b]$ is a 6-regular toroidal graph.  And when $a\ge 3$ and $b\ge 3$,
the graph $T[a\times b]$ is 4-colorable (see Lemma~\ref{6reg-4col}).  
Mohar and Salas~\cite{MS} showed that not all 4-colorings are 4-equivalent (when
$a$ and $b$ are divisible by 3).  In contrast, Bonamy, Bousquet, Feghali, and
Johnson~\cite{BBFJ} showed that all 6-colorings of $T[a\times b]$ are
6-equivalent. Further, they asked whether all 5-colorings of $T[a\times b]$ are
5-equivalent.  This question motivates the present paper.
We answer the question affirmatively when $a\ge
6$ and $b\ge 6$.  

\begin{thmcor}
\label{main-cor}
If $G$ is a triangulated toroidal grid $T[a\times b]$ with $a\ge 6$ and $b\ge
6$, then all 5-colorings of $G$ are 5-equivalent.
\end{thmcor}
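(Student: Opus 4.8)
The plan is to split on the sizes of $a$ and $b$. When $a,b\ge 7$, the shortest non-contractible cycle of $T[a\times b]$ is a row or a column and so has length $\min(a,b)\ge 7$; thus this case is subsumed by our general theorem on $6$-regular toroidal graphs in which every non-contractible cycle has length at least $7$. The remaining cases have $6\in\{a,b\}$, and since $T[a\times b]\cong T[b\times a]$ we may assume $a=6$ and $b\ge 6$ and argue separately. In every case the strategy is to fix a highly structured canonical $5$-coloring $\psi$ (obtained by viewing a $4$-coloring of $G$, which exists by Lemma~\ref{6reg-4col}, as a $5$-coloring, choosing one whose color classes are as regular as possible) and to show that every $5$-coloring $\vph$ of $G$ is $5$-equivalent to $\psi$; since $5$-equivalence is transitive, the theorem then follows. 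If, because of divisibility of $a$ and $b$, there is no single perfectly regular $4$-coloring, one instead reduces to a small explicit family of canonical colorings and checks separately, by a short finite argument, that any two of them are $5$-equivalent.

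To transform $\vph$ into $\psi$ I will proceed in three stages. \textbf{Stage 1: anchor one non-contractible cycle.} Let $Z$ be the row $R_0$, a non-contractible $C_a$. Using Kempe swaps I recolor $\vph$ so that it agrees with $\psi$ on $Z$. \textbf{Stage 2: peel the resulting cylinder.} Cutting $G$ open along $Z$ gives a triangulated cylinder with rows $R_1,\dots,R_{b-1}$; assuming inductively that $\vph$ already agrees with $\psi$ on $R_0,\dots,R_{t-1}$, I make it agree on $R_t$ as well. The only constraints on $R_t\cong C_a$ are its two internal cycle-edges at each vertex and the two edges from each vertex down to the already-fixed row $R_{t-1}$ (one vertical, one diagonal); fixing $R_t$ is therefore a one-dimensional reconfiguration problem — transform one proper $5$-coloring of the cycle $C_a$ into another when each vertex additionally forbids up to two colors. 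Because $5-1-2=2$ there is always slack: a color free along a whole arc, which lets one slide the discrepancies around $C_a$ and off. This is where $a\ge 6$ enters, to guarantee enough room on the cycle. \textbf{Stage 3: close up.} After Stage 2 only $R_{b-1}$ may still disagree with $\psi$, and it is constrained both by the standardized $R_{b-2}$ below it and, through the identification of the torus, by $R_0$ above it; a residual discrepancy is a genuinely global one, carrying nontrivial homology in the direction transverse to $Z$. I will remove it with a single Kempe swap whose component is essentially a non-contractible cycle crossing $Z$ exactly once, arranged so as not to disturb $R_0,\dots,R_{b-2}$ (or to shift them only within the canonical family). Here the fifth color is exactly what makes such a swap available.

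I expect Stage 3 to be the main obstacle. For $4$-colorings this closing step provably cannot be done — that is the Mohar–Salas obstruction — so a correct proof must use the fifth color essentially, and one must simultaneously control both homology directions to be sure the closing swap does not undo Stages 1 and 2; pinning this down is the delicate heart of the argument. A secondary difficulty is the boundary case $a=6$ (equivalently $b=6$): then the relevant non-contractible cycles have length exactly $6$, the ``enough room'' estimates in Stage 2 and the closing argument in Stage 3 become tight, and a more careful hands-on treatment is needed than in the slack-rich regime $a,b\ge 7$.
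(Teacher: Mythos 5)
Your proposal is a genuinely different strategy from the paper's, and it has serious gaps that you have (partly) acknowledged but not resolved. The paper does not peel the torus row by row at all. Instead, its proof of Theorem~1 reuses the entire good-template machinery (Lemmas~\ref{triple-lem}--\ref{main-lem}) essentially unchanged and observes that the only place edge-width~$7$ is used is in Lemma~\ref{loc-help-lem}, to verify that certain small subgraphs $H$ are locally connected. For $a=6$ the offending short non-contractible cycles are explicitly enumerated (the $18$ row/column/diagonal $6$-cycles of $T[6\times 6]$, or the $b$ column $6$-cycles of $T[6\times b]$), and one checks in each lemma whether the subgraph $H$ meets such a cycle in exactly five vertices; in the two lemmas where this can happen, the fix is to add the sixth vertex of the cycle to both $H$ and the template $T$ as its own color and re-verify connectivity of $G-H$. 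So the paper's route is a local, case-by-case bookkeeping refinement of the edge-width-$7$ argument, not a global unwinding of the torus.

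Your Stage~2 and Stage~3 contain the real gaps. In Stage~2 you treat ``fix $R_t$ given fixed $R_{t-1}$'' as a self-contained one-dimensional reconfiguration problem, but Kempe components in $G$ are not confined to $R_t$: a swap meant to fix $R_t$ can propagate into the rows you have already standardized, and you give no mechanism for preventing this. Your slack count ($5-1-2=2$) also ignores the two cycle-neighbors of a vertex within $R_t$; a vertex of $R_t$ has up to four distinctly colored neighbors before any recoloring freedom remains, so the ``color free along a whole arc'' claim is unjustified. Stage~3 is the acknowledged crux and is left entirely as a hope: you need a single swap whose component crosses $Z$ exactly once and disturbs nothing else, but you offer no construction, no reason such a swap exists, and no argument that it is compatible with the identifications already made. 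Since the Mohar--Salas obstruction lives exactly at this closing step, this is precisely the part that cannot be waved away. As written, the proposal is not a proof of Theorem~1; it is an outline of a plausible but unexecuted alternative strategy, whereas the paper's actual argument sidesteps the entire closing-up problem by never cutting the torus open in the first place.
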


Our proof holds in more generality, but
to state our main result we need one more definition.
For an embedding of a graph $G$ in the torus, the \emph{edge-width} is
the length of the shortest non-contractible cycle.  Negami showed that if a
toroidal graph is 6-regular, then it has a unique embedding\footnote{We omit a
formal definition of unique embedding; informally, it means that we can
transform any embedding to any other by ``sliding'' the graph around the torus,
keeping it embedded throughout this sliding process.} in the torus.  So, for
each 6-regular toroidal graph $G$, by the \Emph{edge-width of $G$}
we mean the edge-width of the unique toroidal embedding of $G$.
The purpose of this paper is to prove the following.  

\begin{mainthm}
If $G$ is a 6-regular toroidal graph with edge-width at least 7, then all
5-colorings of $G$ are 5-equivalent.
\end{mainthm}

Our Main Theorem implies the following corollary, the proof of which can
also be found in Section~\ref{outline-sec}.

\begin{maincor}
If $G$ is a 6-regular toroidal graph on $n$ vertices chosen uniformly at
random, then asymptotically almost surely all 5-colorings of $G$ are
5-equivalent.
\end{maincor}

Kempe swaps were introduced in a failed attempt to prove the 4 Color
Theorem.  But the idea was salvaged by Heawood, who used them to prove
the 5 Color Theorem.  They also play a central role in proving most
results on edge coloring, since in a line graph each Kempe component is
either a cycle~or~a~path. 

We now mention another direction of research where the present work is
relevant.
A graph is \Emph{$d$-degenerate} if each of its subgraphs has minimum degree at
most $d$.
Las Vergnas and Meyniel proved~\cite{LvM} (see Lemma~\ref{easy-lem}) that if $G$
is $d$-degenerate, then all $k$-colorings of $G$ are $k$-equivalent, whenever
$k>d$.  Thus, every planar graph $G$ has all $k$-colorings equivalent whenever
$k>5$.  Meyniel~\cite{meyniel} extended this result to the case $k=5$.  In contrast,
Mohar~\cite{mohar} constructed planar graphs with arbitrarily many
4-colorings no two of which are 4-equivalent.\footnote{For example, begin with 
the cartesian product of $K_3$ and $K_2$, drawn in the plane, and add a 
4-vertex inside each 4-face.  The
resulting graph is 4-chromatic, but has two Kempe non-4-eqivalent 4-colorings.
By gluing $t$ copies of this graph along copies of $K_3$, we form a graph with
$2^t$ 4-equivalence classes.  By gluing carefully, we ensure that the
resulting graph is planar.}
However, he showed that if $G$ is
planar and $\chi(G)=3$, then all 4-colorings of $G$ are 4-equivalent.  
It is easy to check that if $G$ is bipartite, then all 3-colorings of $G$ are
3-equivalent.  By combining these results, Mohar showed that if $G$ is planar,
then all $(\chi(G)+1)$-colorings of $G$ are $(\chi(G)+1)$-equivalent.  
It is natural to ask whether the same result holds for all toroidal graphs.
Our results can be viewed as a step toward answering this question
affirmatively.

We prove the Main Theorem in Sections~\ref{outline-sec} and~\ref{templates-sec}.
Since the proof of Theorem~\ref{main-cor} is very similar to that of the Main
Theorem, we only discuss it in Section~\ref{ext-sec}, where we
sketch how to adapt the proof of the Main Theorem.  It is worth noting that the
Main Theorem immediately implies the case where $a\ge 7$ and $b\ge
7$.  By symmetry, we can assume that $a\le b$.  So the discussion in
Section~\ref{ext-sec} just handles the case when $a=6$.

\section{Proof Outline and Preliminaries}
\label{outline-sec}
\subsection{An Introduction to Good Templates}
Given a coloring $\vph$ of $G$, our idea is to identify the vertices in one
or more independent sets, each of which receives a common color under $\vph$.
If the resulting graph is 4-degenerate, then all of its 5-colorings are
5-equivalent, as shown in Lemma~\ref{easy-lem}; and these 5-colorings
correspond to some of the 5-colorings
of $G$ (precisely those 5-colorings of $G$ where the vertices in each identified
independent set receive a common color).  
A \EmphE{good\\ 4-template}{-4mm} in $G$ is an independent set $T$
of size 4 such that identifying all vertices of $T$ yields a 4-degenerate graph;
see Figure~\ref{4template-ex-fig}.
We show that if $\vph_1$ and $\vph_2$ are 5-colorings that each use a common
color on some good 4-template, say $T_1$ and $T_2$, then $\vph_1$ and $\vph_2$
are 5-equivalent.  We also show that every 5-coloring is 5-equivalent to a
5-coloring that uses a common color on the vertices of some good 4-template.
Together, these two steps prove our Main Theorem.
To formalize this approach, we introduce more terminology. 

A \EmphE{template}{-4mm} $T$ in $G$ is a collection of disjoint independent sets; each
set in $T$ is a \Emph{color} of $T$.  A template with a single color is
\emph{monochromatic}.
A template $T$ \Emph{appears} in a
coloring $\vph$ of $G$ if the vertices in each color of $T$ receive a common
color under $\vph$.  If $T$ appears in $\vph$, then also $\vph$
\Emph{contains} $T$.
By \emph{contracting a template}\aaside{contracting\\ a template}{4mm} $T$, we
mean identifying the
vertices in each color of $T$.  When we contract template $T$ in $G$, the
resulting graph is denoted $G_T$.  A template $T$ is \EmphE{good}{3mm} for $G$ if
$G_T$ is 4-degenerate.  
We will see that every 6-regular toroidal graph $G$ is vertex transitive.  So if
$G$ has a single good template, then it has many of them.  Thus, given a
5-coloring $\vph$, our focus will be on finding a good 4-template contained in
$\vph$ (or some 5-coloring that is Kempe 5-equivalent to $\vph$).
Good templates play a central role in our proof of the Main Theorem.  This is
due to the following three easy lemmas.  The first was originally proved
in~\cite{LvM}; but for completeness, we include the proof.
The third, Lemma~\ref{switch-lem}, holds in more generality, which we discuss in
Section~\ref{ext-sec}.

\begin{lem}
Let $G$ be a graph with a vertex $w$ such that $d(w)<k$.  
If all $k$-colorings of $G-w$ are $k$-equivalent, then also all $k$-colorings of
$G$ are $k$-equivalent.  Thus, if $G$ is $d$-degenerate and $d<k$, then all
$k$-colorings of $G$ are $k$-equivalent.
\label{easy-lem}
\end{lem}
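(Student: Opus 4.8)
The plan is to prove the first sentence directly, since the second sentence (the $d$-degenerate case) then follows by a routine induction: if $G$ is $d$-degenerate with $d<k$, pick a vertex $w$ of degree at most $d<k$, note that $G-w$ is also $d$-degenerate, apply induction to conclude all $k$-colorings of $G-w$ are $k$-equivalent, and finish with the first sentence. So I focus on the first claim.

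Let $\vph_1$ and $\vph_2$ be two $k$-colorings of $G$, and let $w$ be the vertex with $d(w)<k$. First I would restrict both colorings to $G-w$; these restrictions $\vph_1',\vph_2'$ are proper $k$-colorings of $G-w$, so by hypothesis there is a sequence of Kempe swaps in $G-w$ taking $\vph_1'$ to $\vph_2'$. The key step is to lift this sequence to $G$. Given any intermediate coloring $\psi'$ of $G-w$ in the sequence, I want a coloring $\psi$ of $G$ that agrees with $\psi'$ on $G-w$; since $w$ has fewer than $k$ neighbors, the neighbors of $w$ use at most $d(w)<k$ colors, so some color is free for $w$, and $\psi$ is a proper $k$-coloring of $G$. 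The subtlety is that a Kempe swap taking $\psi_i'$ to $\psi_{i+1}'$ in $G-w$ need not be a single Kempe swap in $G$, because the Kempe component in $G-w$ may attach to $w$, changing whether $w$'s color stays valid or whether the component extends through $w$. I would handle this with the following maneuver: before performing the $i$-th swap (interchanging colors $\alpha,\beta$ on some component $C$ of $G-w$), if $w$ is currently colored $\alpha$ or $\beta$, first perform a Kempe swap in $G$ on the component of the $\{\gamma,\delta\}$-subgraph containing $w$, where $\gamma\notin\{\alpha,\beta\}$ is a color and $\delta$ is a color not appearing on $N(w)$ (such $\delta$ exists since $d(w)<k$; and if $w$ already avoids $\{\alpha,\beta\}$ we skip this). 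After this preliminary swap $w$ is colored with something outside $\{\alpha,\beta\}$, so the $\{\alpha,\beta\}$-subgraph of $G$ restricted away from $w$ is exactly the $\{\alpha,\beta\}$-subgraph of $G-w$, and performing the swap on $C$ in $G$ is a legitimate single Kempe swap that realizes the desired change and leaves $w$'s color proper. Concatenating these (at most doubled) sequences gives a Kempe path in $G$ from a lift of $\vph_1'$ to a lift of $\vph_2'$.

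Finally I would close the gap between the lifts and $\vph_1,\vph_2$ themselves. The lift of $\vph_1'$ agrees with $\vph_1$ on $G-w$ but may differ at $w$; since both color $w$ with some color avoiding $N(w)$, a single Kempe swap on the relevant two-color class through $w$ (one color being $\vph_1(w)$, the other a color absent from $N(w)$) converts one to the other, and likewise for $\vph_2$. Chaining everything shows $\vph_1$ and $\vph_2$ are $k$-equivalent.

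I expect the main obstacle to be exactly the bookkeeping around $w$: ensuring that each lifted swap is a \emph{single} Kempe swap in $G$ rather than several, which is why the preliminary recoloring of $w$ (to dodge the two active colors) is the crux. Everything else is straightforward, and the degenerate consequence is immediate by induction on $|V(G)|$.
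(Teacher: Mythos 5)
Your overall plan — lift the Kempe sequence from $G-w$ to $G$ by occasionally recoloring $w$ out of the way — is the same as the paper's, but the preliminary recoloring step has a genuine gap. For the preliminary swap to both move $w$ out of $\{\alpha,\beta\}$ and leave the coloring of $G-w$ unchanged (so it still matches $\psi_i'$), you need a color $\mu\notin\{\alpha,\beta\}$ that is also absent from $N(w)$; only then is the two-color component containing $w$ just $\{w\}$. Such a $\mu$ need not exist. For example, if $d(w)=k-1$ and the $k-1$ neighbors of $w$ carry $k-1$ distinct colors, the only color missing from $N(w)$ is $w$'s own color, which by hypothesis lies in $\{\alpha,\beta\}$. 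Under your stated choice ($\gamma\notin\{\alpha,\beta\}$, $\delta$ absent from $N(w)$, hence $\delta$ equal to $w$'s current color), the $\gamma/\delta$ component of $w$ then contains at least one neighbor of $w$, so the swap recolors vertices of $G-w$ and breaks the correspondence with the sequence $\psi_1',\ldots,\psi_s'$.

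The paper sidesteps this by recoloring $w$ only when it is actually necessary: namely, when $w$ lies in the $\alpha/\beta$ component of $v_i$ \emph{and} has at least two neighbors in that component. In that case two neighbors of $w$ share a color, so by Pigeonhole some color is absent from the closed neighborhood $N[w]$; that color is automatically different from $\alpha$ and $\beta$, and recoloring $w$ with it is a valid single-vertex Kempe swap. In every other case — $w$ not in the component, or $w$ in the component with only one neighbor there — performing the $\alpha/\beta$ swap directly in $G$ is already safe: $w$'s color may flip between $\alpha$ and $\beta$, but the resulting coloring of $G$ is proper and its restriction to $G-w$ is exactly $\psi_{i+1}'$. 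Your condition ``whenever $w$ is colored $\alpha$ or $\beta$'' is too eager, and it is precisely in the cases where the paper's condition does \emph{not} trigger that the color you need may fail to exist. (A smaller point: the paper also avoids your end-of-argument patching by choosing the lift so that $\psi_1=\vph_1$ from the outset, so only the final coloring may need one extra swap at $w$.)
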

\begin{proof}
The second statement follows from the first by induction on $|V(G)|$, where the
base case $|V(G)|=1$ holds trivially.  Now we prove the first.
Let $w$ be a vertex with $d(w)<k$. 
Let $G':=G-w$.  Let $\vph_1$ and $\vph_2$ denote $k$-colorings of $G$ and let
$\vph_1'$ and $\vph_2'$ denote their restrictions to $G'$.
By hypothesis, all $k$-colorings of $G'$ are $k$-equivalent.
So there exists a sequence $\psi'_1,\ldots,\psi'_s$ of $k$-colorings of $G'$
such that $\psi'_1=\vph_1'$, $\psi'_s=\vph_2'$ and each $\psi'_{i+1}$ differs
from $\psi'_i$ by only a single Kempe swap.  We extend each $\psi'_i$ to a
$k$-coloring $\psi_i$ of $G$ as follows.  Let $\psi_1=\vph_1$.  Suppose that
$\psi'_{i+1}$ differs from $\psi'_i$ by an $\alpha/\beta$ swap at a vertex
$v_i$.  To construct $\psi_{i+1}$ from $\psi_i$ we use the same $\alpha/\beta$
swap at $v_i$, unless $w$ lies in the same $\alpha/\beta$ component as $v_i$ and
has at least 2 neighbors in that component.  In that case, some color 
$\gamma\in \{1,\ldots,k\}$ is not used on the closed neighborhood of $w$.  Now
we first recolor $w$ with $\gamma$, and then use the $\alpha/\beta$ swap at $v_i$.
We call the resulting coloring $\psi_{i+1}$.  By induction on $i$, each
$\psi_i$ restricts to $\psi'_i$ on $G'$.  Thus, $\psi_s$ agrees with $\vph_2$ on
all vertices except for possibly $w$.  If needed, recolor $w$ with its color in
$\vph_2$.  
\end{proof}

\begin{lem}
If $T$ is a good template in a graph $G$, then all 5-colorings of $G$ containing
$T$ are 5-equivalent.
\label{template-lem}
\end{lem}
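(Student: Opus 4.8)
The plan is to reduce Lemma~\ref{template-lem} to Lemma~\ref{easy-lem} by passing to the contracted graph $G_T$. Since $T$ is good, $G_T$ is 4-degenerate, so by Lemma~\ref{easy-lem} (with $k=5$, $d=4$) all 5-colorings of $G_T$ are 5-equivalent. The key observation is that there is a natural correspondence between 5-colorings of $G_T$ and 5-colorings of $G$ that contain $T$: a 5-coloring $\psi$ of $G$ contains $T$ precisely when it is constant on each color class of $T$, and such a $\psi$ descends to a well-defined 5-coloring $\widehat{\psi}$ of $G_T$ (assigning to the vertex obtained by identifying a color class the common value of $\psi$ on that class); conversely every 5-coloring of $G_T$ arises this way from a unique 5-coloring of $G$ containing $T$. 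Both directions are proper: an edge of $G_T$ comes from an edge of $G$ (no color class of $T$ contains both endpoints of an edge, since each class is independent), so properness transfers. Thus $\psi\mapsto\widehat{\psi}$ is a bijection between $\{5\text{-colorings of }G\text{ containing }T\}$ and $\{5\text{-colorings of }G_T\}$.

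The main step is then to check that this bijection is compatible with Kempe swaps, so that a Kempe-swap sequence in $G_T$ lifts to one in $G$. First I would observe that a single Kempe swap in $G_T$ lifts directly: if $\widehat{\psi}$ and $\widehat{\psi}'$ differ by an $\alpha/\beta$ swap on a Kempe component $C$ of $G_T$, then the preimage of $C$ in $G$ (the union of the vertices mapping into $C$) is exactly an $\alpha/\beta$ Kempe component of $\psi$ in $G$ — here one uses that for each color class $X$ of $T$, either all of $X$ lies in $C$ or none of it does, which holds because the vertex of $G_T$ representing $X$ either lies in $C$ or not, together with the fact that $G_T$-adjacency of that vertex mirrors $G$-adjacency of $X$. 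Performing that $\alpha/\beta$ swap in $G$ produces a coloring that still contains $T$ (each color class of $T$ is recolored uniformly) and descends to $\widehat{\psi}'$. So a Kempe-swap path from $\widehat{\psi_1}$ to $\widehat{\psi_2}$ in $G_T$ lifts to a Kempe-swap path from $\psi_1$ to $\psi_2$ in $G$, all of whose colorings contain $T$. That gives the conclusion.

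The point requiring the most care is the claim that preimages of Kempe components are Kempe components — specifically that a color class of $T$ is never ``split'' by an $\alpha/\beta$ component of $G$, and that no extra vertices sneak into the component. This is really just bookkeeping about the quotient map $G\to G_T$: adjacency and colors are both preserved and reflected by the identification (independence of the classes of $T$ is what guarantees no self-loops and no loss of edges), so the $\alpha/\beta$-subgraph of $G$ is exactly the preimage of the $\alpha/\beta$-subgraph of $G_T$, and connected components correspond. I would state this as a short explicit claim and verify it in a sentence or two rather than belaboring it. Note that, unlike Lemma~\ref{easy-lem}, we do not even need to worry about a leftover vertex to recolor at the end, because the bijection is exact on the nose.
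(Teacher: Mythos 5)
Your overall plan — pass to $G_T$, invoke Lemma~\ref{easy-lem}, and lift the resulting Kempe-swap sequence back to $G$ — is exactly the approach the paper takes. But there is a genuine error in the lifting step. You claim that the preimage in $G$ of an $\alpha/\beta$ Kempe component $C$ of $G_T$ is ``exactly an $\alpha/\beta$ Kempe component'' of $G$, so that one swap in $G_T$ lifts to one swap in $G$. This is false. The quotient map can \emph{merge} several $\alpha/\beta$-components of $G$ into a single component of $G_T$: if a color class $X=\{x_1,x_2\}$ of $T$ is colored $\alpha$ and $x_1,x_2$ lie in distinct $\alpha/\beta$-components of $G$, then identifying $x_1$ and $x_2$ fuses those two components in $G_T$. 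The two facts you cite (each class is entirely in or entirely out of the preimage, and $G_T$-adjacency mirrors $G$-adjacency) do show that the preimage is a \emph{union} of $\alpha/\beta$-components of $G$ and that it respects the template classes, but they do not show the preimage is connected, and in general it is not.

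The argument is easily repaired, and the repaired version is what the paper actually does: to simulate one $\alpha/\beta$ swap at $v$ in $G_T$, perform an $\alpha/\beta$ swap on \emph{each} $\alpha/\beta$-component of $G$ contained in the preimage of the corresponding component $C$ of $G_T$ — in general several Kempe swaps, not one. After performing all of them, every vertex whose image lies in $C$ (and no other) has had its color swapped, so the resulting coloring of $G$ again contains $T$ and descends to the post-swap coloring of $G_T$; the intermediate colorings between these swaps need not contain $T$, but that is irrelevant for $5$-equivalence, which only requires a sequence of swaps. So the corrected correspondence is a one-to-many lift of swaps, not a bijection of swaps; this is the one place where your write-up, as stated, would not survive a careful referee. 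The rest (the bijection on colorings, properness transferring, the reduction to Lemma~\ref{easy-lem}) is fine and matches the paper.
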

\begin{proof}
Form $G_T$ from $G$ by contracting $T$.  Note that each 5-coloring $\vph$ of
$G$ containing $T$ corresponds to a 5-coloring $\vph_T$ of $G_T$ (formed by
contracting $T$ in $\vph$).  Since $T$ is good, $G_T$ is 4-degenerate.  By
Lemma~\ref{easy-lem}, all 5-colorings of $G_T$ are 5-equivalent.  If $\eta$ and
$\zeta$ are 5-colorings of $G$ containing $T$, then $\eta_T$ and $\zeta_T$
are 5-equivalent colorings of $G_T$.  Further, this is witnessed by a sequence of
Kempe swaps.  This same sequence of Kempe swaps witnesses the 5-equivalence of
$\eta$ and $\zeta$.  To simulate in $G$ an $\alpha$/$\beta$ swap at a vertex $v$ in
$G_T$, we simply perform an $\alpha$/$\beta$ swap at each vertex in $G$ that
was identified to form $v$.  (If $v\in V(G)$, then this is a single swap; but
if $v$ represents some non-singleton color of $T$, then this may be multiple swaps.)
\end{proof}

\begin{lem}
\label{switch-lem}
Let $G$ be a 4-colorable graph.  If $\vph_1$ and $\vph_2$ are 5-colorings of $G$
that contain monochromatic good templates $T_1$ and $T_2$, then $\vph_1$ and
$\vph_2$ are 5-equivalent.
\end{lem}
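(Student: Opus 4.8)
The plan is to reduce the general case to the case where $T_1 = T_2$, since once the two templates coincide, Lemma~\ref{template-lem} finishes the job immediately. Concretely, suppose $\vph_1$ contains the monochromatic good template $T_1$ (an independent set colored, say, with color $1$ under $\vph_1$) and $\vph_2$ contains the monochromatic good template $T_2$ (colored, say, with color $1$ under $\vph_2$, after a harmless renaming). It suffices to exhibit a 5-coloring $\psi$ that is 5-equivalent to $\vph_1$ and that contains $T_2$: then $\psi$ and $\vph_2$ are 5-equivalent by Lemma~\ref{template-lem} (both contain $T_2$), and hence so are $\vph_1$ and $\vph_2$ by transitivity. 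So the whole task is: starting from $\vph_1$, use Kempe swaps to reach a coloring in which all vertices of $T_2$ share a common color.

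The key tool is that $G$ is 4-colorable. Fix a proper 4-coloring $c$ of $G$ using colors $\{1,2,3,4\}$; then color $5$ is entirely unused, which gives us a great deal of freedom. I would argue that any 5-coloring containing a monochromatic good template is 5-equivalent to the fixed 4-coloring $c$ (viewed as a 5-coloring): indeed, $\vph_1$ contains $T_1$, so by Lemma~\ref{template-lem} $\vph_1$ is 5-equivalent to \emph{every} 5-coloring containing $T_1$; and among those is some 5-coloring closely related to $c$ — for instance, take $c$ and recolor the vertices of $T_1$ to a common new color (color $5$ if $T_1$ is not already monochromatic under $c$), which is still proper since $T_1$ is independent, and which contains $T_1$ by construction. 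Running the same argument from the $\vph_2$/$T_2$ side, $\vph_2$ is 5-equivalent to a 5-coloring closely related to $c$ via $T_2$. The remaining gap is to connect these two "$c$-like" colorings to each other (or directly to $c$) by Kempe swaps — but here the unused-color slack does the work: starting from $c$, one can peel the vertices of a template onto color $5$ one Kempe component at a time (each such swap is legal because color $5$ is a free color off the template), or more cleanly, observe that $c$ itself contains \emph{some} good template and invoke Lemma~\ref{template-lem} once more. Chaining these equivalences gives $\vph_1 \sim c \sim \vph_2$.

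I would organize the write-up as: (i) reduce to showing each $\vph_i$ is 5-equivalent to a fixed benchmark coloring; (ii) choose the benchmark to be a proper 4-coloring $c$ of $G$ (exists by hypothesis); (iii) show $c$, possibly after one Kempe swap to free color $5$ on the relevant independent set, contains $T_i$, so Lemma~\ref{template-lem} applies; (iv) conclude by transitivity of 5-equivalence. The main obstacle — and the step deserving the most care — is (iii): making sure that the 4-coloring $c$ really is 5-equivalent to a 5-coloring containing the \emph{prescribed} template $T_i$, not merely some good template. The subtlety is that $T_i$ is an arbitrary good independent set, and under $c$ its vertices may already carry several different colors from $\{1,2,3,4\}$; recoloring all of them to color $5$ in one stroke is proper (independence of $T_i$) but is \emph{not} a single Kempe swap, so one must either argue that this bulk recoloring is itself reachable by a sequence of Kempe swaps (e.g.\ because color $5$ is unused and the vertices of $T_i$ can be captured one color-class at a time as $\{j,5\}$-components) or, more efficiently, route through Lemma~\ref{template-lem} applied to a good template that $c$ visibly contains. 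Everything else is bookkeeping with the transitivity of the 5-equivalence relation.
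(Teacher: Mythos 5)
Your proposal is correct and matches the paper's proof: both route through a fixed proper 4-coloring $c$ (with the fifth color free), form from $c$ a coloring containing $T_i$ by recoloring $T_i$ to the unused color, justify that this bulk recoloring decomposes into one Kempe swap per vertex (each vertex of $T_i$ is a singleton $\{j,5\}$-Kempe component since the fifth color is absent from its neighborhood), and then chain $\vph_1 \sim$ (that coloring) $\sim c \sim \cdots \sim \vph_2$ via Lemma~\ref{template-lem} and transitivity. The side remark that one might instead ``invoke Lemma~\ref{template-lem} on a good template that $c$ visibly contains'' would not by itself connect $c$ to a coloring containing the \emph{prescribed} $T_i$, but since you keep the peeling argument as your primary route this does not affect the correctness of the proposal.
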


\begin{proof}
Let $\vph_0$ be a 4-coloring of $G$; for concreteness, assume that $\vph_0$ does
not use green.  Form $\vph_1'$ and $\vph_2'$ from $\vph_0$ by recoloring the
vertices of $T_1$ and $T_2$, respectively, with green.  Now $\vph_1$ and
$\vph_1'$ are 5-equivalent, by Lemma~\ref{template-lem}, since they both contain
the good template $T_1$.  Similarly, $\vph_2$ and $\vph_2'$ are 5-equivalent.
Finally, $\vph_1'$ and $\vph_2'$ are both 5-equivalent to $\vph_0$, since each
is formed from $\vph_0$ by recoloring an independent set with green (and each
such recoloring step is a valid Kempe swap).
\end{proof}

We will see that all 6-regular toroidal graphs with edge-width at least 7 are
4-colorable (with one exception, which we handle separately).  This was proved
by Yeh and Zhu~\cite{YZ}, building on work of Collins and Hutchinson~\cite{CH}.
 The latter also proved that $T[a\times b]$ is 4-colorable whenever $a\ge 6$
and $b\ge 6$.  So most of the work in proving our Main Theorem (as well as
Theorem~\ref{main-cor}) goes to showing that if $G$ is a 6-regular toroidal
graph with edge-width at least 7, then every 5-coloring of $G$ is 5-equivalent
to a coloring that contains a monochromatic good template.
That is the content of Section~\ref{templates-sec}.

In view of Lemmas~\ref{template-lem} and \ref{switch-lem}, and ideas
in the previous paragraph, we need a tool to prove that certain templates are
good.  
A \emph{4-degeneracy order}\aaside{4-degener-acy~order}{-4mm} of a graph $G$ is an order $\sigma$ of $V(G)$ such
that each vertex in $\sigma$ has at least $d(v)-4$ neighbors that appear earlier
in $\sigma$.
A \emph{4-degeneracy prefix}\aaside{4-degener-acy~prefix}{-1mm} of a graph $G$ is an order
$\sigma$ of some subset of $V(G)$ such that each vertex in $\sigma$ has at least
$d(v)-4$ neighbors that appear earlier in $\sigma$.
A subgraph $H$ of $G$ is \EmphE{locally connected}{-2mm} if each pair of vertices in
$H$ that is at distance two in $G$ is also at distance two in $H$.  A subgraph
$H$ of $G$ is \EmphE{well-behaved}{0mm} if (i) it is locally connected and (ii)
$G-H$ is connected.

\begin{lem}
Let $H$ be a well-behaved subgraph of $G$ and let $V_H$ denote its vertex set.
Let $T$ be a template with all vertices in $V_H$; denote the new vertices in
$G_T$ by $V_T$.  Suppose there exist $v_1,v_2\in V_H\setminus V_T$ and there
exist $v_3,v_4\in V(G_T)\setminus V_H$ such that $v_3$ is a common neighbor of
$v_1,v_2$ and that $v_4$ is a common neighbor of $v_3,v_i$ for some $i\in\{1,2\}$.  
If there exists a 4-degeneracy prefix $\sigma$ for $V_H\setminus V_T$,
then $G_T$ has a 4-degeneracy order, with all vertices of $V_T$ coming last in
the degeneracy order.
\label{good-template-lem}
\end{lem}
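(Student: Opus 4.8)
The plan is to build a $4$-degeneracy order of $G_T$ explicitly, in three consecutive blocks: (I) the vertices of $V_H\setminus V_T$, in the order $\sigma$; (II) the vertices of $V(G_T)\setminus V_H$, in an order described below; and (III) the vertices of $V_T$. Recall that our convention asks each vertex to have at most four \emph{later} neighbors in the order, so equivalently we strip vertices off the front one at a time, each having at most four surviving neighbors when it is stripped.

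Block~(I) is forced: when we reach a vertex $v$ of $\sigma$, all of its $\sigma$-predecessors have already been stripped, so the surviving neighbors of $v$ are exactly its neighbors in $G_T$ that are not $\sigma$-predecessors of $v$, and by the defining property of a $4$-degeneracy prefix there are at most four of these. After block~(I) the surviving graph is $G_T$ restricted to $\bigl(V(G_T)\setminus V_H\bigr)\cup V_T$; since every vertex of $T$ lies in $V_H$, contracting $T$ leaves $V(G_T)\setminus V_H=V(G)\setminus V(H)$ untouched, so deleting $V_T$ from this surviving graph gives exactly $G-H$, which is connected because $H$ is well-behaved.

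Block~(II) is the heart of the argument. We place $v_3$ first and $v_4$ second. Since $v_1,v_2\in V_H\setminus V_T$ were stripped in block~(I), when $v_3$ is stripped it has already lost two neighbors, so it has at most $d_{G_T}(v_3)-2\le 4$ survivors (using that $v_3$ is an original, non-contracted vertex, so $d_{G_T}(v_3)\le d_G(v_3)$, which is $6$ in our setting); similarly $v_4$ has already lost $v_1$ and $v_3$, leaving at most $d_{G_T}(v_4)-2\le 4$ survivors. The remaining vertices of $V(G_T)\setminus V_H$ are then stripped along a search order of the connected graph $G-H$ that begins at the part already stripped (the vertices of $G-H$ with a neighbor in $V_H\setminus V_T$, together with $v_3$ and $v_4$) and works inward; here local connectedness of $H$ is what pins down the interface between $H$ and $G-H$ tightly enough that every such vertex has accumulated at least $d-4$ stripped neighbors by the time its turn comes. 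Block~(III) is then automatic: the vertices of $V_T$ come last, so each has at most four later neighbors, namely the other vertices of $V_T$ (a single vertex in the monochromatic case, and in any case harmless once $G_T[V_T]$ is $4$-degenerate).

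I expect block~(II) to be the main obstacle, specifically making the search order on $G-H$ precise and verifying the ``at least $d-4$ stripped neighbors'' bound for every vertex of $G-H$ other than $v_3$ and $v_4$. The three hypotheses pull their weight exactly there: connectedness of $G-H$ supplies a root-to-leaves stripping order so that boundary vertices of $G-H$ go before interior ones; the prescribed adjacencies among $v_1,v_2,v_3,v_4$ give the two seed vertices $v_3,v_4$ whose early neighbors were already consumed in block~(I); and local connectedness of $H$ bounds how $G-H$ can attach to $H$, which is what lets the bookkeeping of stripped neighbors close along the search order. I would leave that interface bookkeeping for last, since it is the most delicate step.
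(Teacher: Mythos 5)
Your three-block skeleton matches the paper's proof exactly, and blocks (I) and (III) are handled correctly, as is the seeding of block (II) with $v_3$ then $v_4$ (the bookkeeping of already-stripped neighbors $v_1,v_2$ and $v_1,v_3$ is right). But the proposal stops short of the only non-routine part of the lemma, and you say so yourself: you never actually show that the remaining vertices of $V(G)\setminus V_H$ can be ordered so that each has at least two already-stripped neighbors. The hint you offer---``a search order of the connected graph $G-H$ that begins at the part already stripped \dots and works inward''---does not close the gap. A plain BFS/DFS from an already-processed set guarantees only \emph{one} earlier neighbor per newly visited vertex, and ``vertices of $G-H$ with a neighbor in $V_H\setminus V_T$'' are not automatically safe to strip either, since a single neighbor in $\sigma$ only accounts for one of the two needed earlier neighbors. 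So the mechanism as written would fail.

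The missing idea is where all three hypotheses are actually spent, and it is more delicate than a search order. The paper grows a set $R\subseteq V(G)\setminus V_H$ one vertex at a time, maintaining that $G[R]$ is connected, and at each step picks a vertex $v\in R$ with some neighbor still unprocessed (such a $v$ exists because $G-H$ is connected). It then runs a case analysis on the cyclic order $w_1,\dots,w_6$ of $v$'s six neighbors: arranging $w_1\in R$, $w_6\notin R$, and repeatedly invoking local connectedness of $H$ to rule out ``isolated'' intrusions of $V_H$ into the cyclic neighborhood, it shows that some $w_j\in S$ has both $v$ and one of $w_{j-1},w_{j+1}$ already processed. That consecutive-pair argument around a degree-$6$ vertex is the substance of the lemma; without it the proof is incomplete. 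Your write-up correctly identifies which hypothesis is ``for what,'' but identifying the roles is not the same as supplying the argument, and as it stands the key step is asserted rather than proved.

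Two smaller points worth flagging. First, be careful with the set $V(G_T)\setminus V_H$: under the paper's convention this equals $V(G)\setminus V_H$ (the contracted vertices $V_T$ are regarded as sitting inside $V_H$), so block (II) should be $V(G)\setminus V_H$, not $V(G_T)\setminus V_H$, to keep blocks (II) and (III) disjoint. Second, block (III) silently needs $|V_T|\le 5$ so that the last vertices have at most four later neighbors among themselves; the paper shares this implicit assumption, and in all applications $|V_T|$ is small, but it deserves a sentence.
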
 
\begin{proof}
We will extend $\sigma$ to all of $V(G_T)\setminus V_H$.  Let $R:=\emptyset$.  We
iteratively add vertices of $V(G_T)\setminus V_H$ to $R$, so that at each step
(i) $G[R]$ is connected and (ii) 
$\sigma$ can be extended to $(V_H\setminus V_T)\cup R$.
Since $v_1,v_2\in V_H\setminus V_T$ and $v_3$ is their common neighbor, we can
add $v_3$ to $R$ and append it to $\sigma$; similarly, we can add $v_4$.
Let $S:=V(G_T)\setminus (V_H\cup R)$.  Now suppose that $S \ne\emptyset$. 
Choose $v\in R$ such that $v$ has a neighbor in $S$.  (This is possible since
$G-H$ is connected.) Denote the neighbors of $v$ in cyclic order by
$w_1,\ldots,w_6$.  Since $G[R]$ is connected, and $|R|\ge 2$, we can assume
that $w_1\in R$ and $w_6\notin R$.  If $w_6\in S$, then we add $w_6$ to $R$
and append it to the prefix, since its neighbors $v$ and $w_1$ are
already in $R$.  So assume that $w_6\in V_H$.  Now $w_2\notin V_H$, since
$w_1,v\notin V_H$ and $H$ is locally connected.
If $w_2\in S$, then we add $w_2$ to
$R$, since its neighbors $w_1$ and $v$ are in $R$.  So assume that $w_2\in R$.
Note that $w_3\notin V_H$, since $H$ is locally connected and $w_6\in V_H$ but
$v\notin V_H$.
Again, if $w_3\in S$, then we add $w_3$ to $R$; so assume that $w_3\in R$.  If
$w_4\in S$, then we add $w_4$ to $R$, so assume that $w_4\notin S$.  If $w_4\in
V_H$, then also $w_5\in V_H$, since $H$ is locally connected and $w_4,w_6\in V_H$.
This contradicts our choice of $v$ as having a neighbor in $S$.
So $w_4\notin V_H$, which implies that $w_4\in R$.  Finally, $w_5\in S$ since
$v$ has some neighbor in $S$, by our choice of $v$.  Thus, we can always grow $R$
and $\sigma$, as desired.
\end{proof}

\begin{figure}
\centering
\begin{tikzpicture}

\begin{scope}
\clip(-1.73,-1.02) rectangle (3.46,2.02);  
\foreach \ang in {0, 60, 120}  
{
\begin{scope}[rotate=\ang]
\foreach \i in {-5,...,5} 
\draw[thick] (-5.5, \i) -- (6,\i);
\end{scope}
}
\end{scope}
\tikzstyle{uStyle}=[shape = circle, minimum size = 15pt, inner sep =
1.5pt, outer sep = 0pt, draw, fill=white]

\begin{scope}[xscale=.577] 
\foreach \i/\j/\col in {
-2/0/orange, 1/1/orange, 1/-1/orange, 4/2/orange}
\draw[very thick] (\i,\j) node[uStyle, draw=my\col, fill=my\col!60] {};  

\foreach \i/\j/\lab in {
0/0/1, 2/0/2, 3/1/3}
\draw[very thick] (\i,\j) node[uStyle, draw=black!60!white, fill=white] {\small{\lab}};
\end{scope}

\begin{scope}[xshift = -2.5in]
\begin{scope}
\clip(-1.73,-1.02) rectangle (3.46,2.02);  
\foreach \ang in {0, 60, 120}  
{
\begin{scope}[rotate=\ang]
\foreach \i in {-5,...,5} 
\draw[thick] (-5.5, \i) -- (6,\i);
\end{scope}
}
\end{scope}
\tikzstyle{uStyle}=[shape = circle, minimum size = 15pt, inner sep =
1.5pt, outer sep = 0pt, draw, fill=white]

\begin{scope}[xscale=.577] 
\foreach \i/\j/\col in {
-2/0/orange, 1/1/orange, 1/-1/orange, 5/1/orange}
\draw[very thick] (\i,\j) node[uStyle, draw=my\col, fill=my\col!60] {};  

\foreach \i/\j/\lab in {
0/0/1, 2/0/2, 3/1/3}
\draw[very thick] (\i,\j) node[uStyle, draw=black!60!white, fill=white] {\small{\lab}};
\end{scope}
\end{scope}

\end{tikzpicture}
\caption{Two examples of good 4-templates in 6-regular toroidal graphs; each
includes a triple centered at 1.  In both examples, 2 and 3 serve as $v_1$
and $v_2$ in Lemma~\ref{good-template-lem}.
It is easy to check that the subgraph induced by the four orange vertices and
the three numbered vertices is locally connected; thus, it is well-behaved.
\label{4template-ex-fig}}
\end{figure}
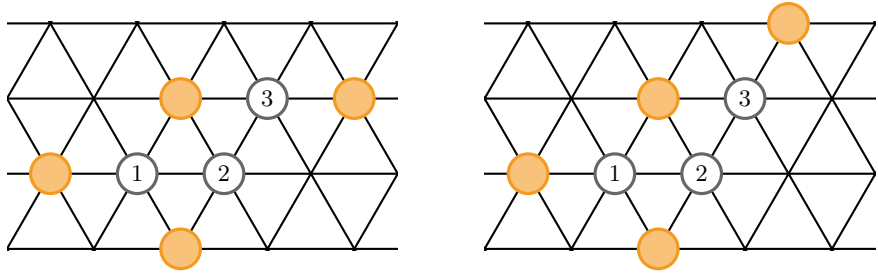

To apply Lemma~\ref{good-template-lem}, we will want to verify that some
subgraphs $H$ are locally connected.  This is fairly easy, but is complicated
slightly by the presence of non-contractible cycles.  So the following lemma is
useful.

\begin{lem}
Let $G$ be a 6-regular toroidal graph with edge-width at least 7.  If $H$ is a
subgraph of $G$ with diameter at most 4, then $H$ is locally connected unless
there exists a vertex $w$ such that $H$ contains at least four neighbors of $w$
but excludes $w$.
\label{loc-help-lem}
\end{lem}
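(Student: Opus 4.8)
\textbf{Proof plan for Lemma~\ref{loc-help-lem}.}

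The plan is to argue contrapositively on the failure of local connectivity. Suppose $H$ is not locally connected, so there exist vertices $u,u'\in V(H)$ that are at distance two in $G$ but whose distance in $H$ is larger (or infinite). Fix a common neighbor $w$ of $u$ and $u'$ in $G$; by assumption $w\notin V(H)$, since otherwise $u{-}w{-}u'$ would be a path of length two in $H$. The goal is to show that $H$ must then contain at least four neighbors of $w$. Since $d(w)=6$ and $G$ is 6-regular, list the neighbors of $w$ in cyclic order (in the unique toroidal embedding) as $x_1,\ldots,x_6$, with $u=x_1$ and $u'=x_j$ for some $j$. Because consecutive $x_i$'s are adjacent (each pair $x_i x_{i+1}$ bounds a triangular face with $w$), the neighbors of $w$ in $H$ split the cyclic sequence $x_1,\ldots,x_6$ into arcs; $u$ and $u'$ lie in different arcs, for otherwise walking along consecutive neighbors of $w$ inside $H$ would give a short $u$--$u'$ path in $H$. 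So the neighbors of $w$ lying in $H$ form at least two ``blocks'' separating $x_1$ from $x_j$ on the 6-cycle around $w$.

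The first step is to rule out $H$ containing exactly two neighbors of $w$: if $x_1=u$ and $x_j=u'$ were the only neighbors of $w$ in $H$, I would produce a $u$--$u'$ path in $H$ of length at most four that avoids $w$, contradicting the choice of $u,u'$. To do this I use that $H$ has diameter at most $4$: there is \emph{some} $u$--$u'$ path $P$ in $H$ of length $\le 4$; I need this path to exist \emph{and} to show that whenever $H$ contains at most three neighbors of $w$, local connectivity is actually not violated at this pair. Concretely, I will show that the second neighborhood structure around $w$ forces a short alternate route. The clean way: observe that the ``link'' of $w$ — the subgraph on $x_1,\ldots,x_6$ — is a 6-cycle $C_w$ (here edge-width $\ge 7$ is used to guarantee this 6-cycle is an induced, genuine cycle and that $w$ together with $N(w)$ is the expected patch of the triangulation, i.e.\ there are no identifications or chords arising from short non-contractible cycles). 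On $C_w$, the two vertices $u$ and $u'$ are joined by two arcs, one of length $a$ and one of length $6-a$ with $1\le a\le 3$. Each such arc is a path in $G$, hence a path between $u$ and $u'$ of length $\le 3$ avoiding $w$; such a path lies in $H$ provided all of its internal vertices lie in $H$. If $H$ omits at most one of $x_2,\ldots,x_6$, then at least one of the two arcs has all internal vertices in $H$ (the omitted vertex can kill at most one arc), giving a $u$--$u'$ path in $H$ of length $\le 3$ avoiding $w$ — contradiction. Hence $H$ must omit at least two of $x_2,\ldots,x_6$, i.e.\ $H$ contains at most $6-2-\cdots$; recomputing, $H$ contains $x_1,x_j$ plus at most $6-2=4$ of the remaining four, but we need to be careful: we have shown at least two of the \emph{remaining four} neighbors of $w$ are outside $H$, so $H$ contains $x_1$, $x_j$, and at most two more — at most four neighbors of $w$ in total. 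That is the wrong direction for what we want, so the actual claim must be sharper: I re-examine and instead show that when $H$ is \emph{not} locally connected, $H$ contains at least four neighbors of $w$. The fix is that the violating pair $u,u'$ need not be adjacent vertices of $C_w$ in the relevant sense; more importantly, $H$ being a subgraph with the induced structure, if $H$ contained $\le 3$ neighbors of $w$, then along $C_w$ those $\le 3$ vertices still leave a consecutive arc of length $\ge 3$ of missing neighbors, and I must instead route $u$--$u'$ through vertices at distance two from $w$. This is exactly where the diameter-$\le 4$ hypothesis does real work: any $u$--$u'$ path in $H$ of length $\le 4$ that is forced to leave $N[w]$ immediately returns, and a short case analysis on where it re-enters $N(w)$ shows it must pass through another neighbor of $w$ in $H$, boosting the count.

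So the key steps, in order: (1) set up the contrapositive and fix the witness pair $u,u'$ and their common non-$H$ neighbor $w$; (2) use edge-width $\ge 7$ to certify that $N[w]$ looks like a standard triangulated hexagonal patch — in particular $N(w)$ induces a 6-cycle $C_w$ with no extra identifications, so that ``distance two in $G$ via $w$'' is the only subtlety and all other distance-two pairs in $H$ near $w$ have their common neighbor determined by the local patch; (3) take a shortest $u$--$u'$ path $P$ in $H$, which has length at most $4$ by the diameter hypothesis and avoids $w$; (4) analyze $P$ according to how many of its vertices lie in $N(w)$: if $P$ stays within $N[w]\setminus\{w\}=C_w$ it uses only neighbors of $w$, and its length being $\le 4$ combined with it being a $u$--$u'$ arc of $C_w$ forces those neighbors (together with $u,u'$) to be four distinct neighbors of $w$ in $H$; if $P$ leaves $C_w$, track its first departure vertex $y\in N(w)\cap V(H)$ and first return vertex $y'\in N(w)\cap V(H)$ and argue $y\ne y'$ and both differ from $u,u'$, again yielding four neighbors of $w$ in $H$. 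The main obstacle I anticipate is step (2) and the bookkeeping in step (4): one must handle, uniformly, the possibility that the shortest path $P$ is short precisely \emph{because} it wanders through second-neighborhood vertices, and confirm in every sub-case that at least four neighbors of $w$ end up inside $H$ — this is a finite but somewhat delicate case check on paths of length $\le 4$ in the fixed hexagonal patch, and getting the edge-width bound to be exactly what certifies ``no short-cycle pathologies'' for diameter-$\le 4$ subgraphs is the crux.
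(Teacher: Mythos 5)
Your approach---contrapositive, then exploit the diameter-$\le 4$ hypothesis to find a short $u$--$u'$ path in $H$ and count which neighbors of the common neighbor(s) that path is forced through---is the same approach the paper takes, and the paper is similarly brief about the final case analysis. But there is a concrete gap in your setup that must be fixed before the argument closes.

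You fix a \emph{single} common neighbor $w$ of $u,u'$ at the outset, and your step (4) always concludes that $H$ contains four neighbors \emph{of that particular $w$}. However, in a 6-regular toroidal triangulation two vertices at distance two can share \emph{two} common neighbors, $y_1$ and $y_2$ (exactly when $u,u'$ are at arc-distance $2$ on the link $C_{y_1}$, with $y_2$ the intermediate vertex), and both must lie outside $H$. In that situation a shortest $u$--$u'$ path $P$ in $H$ (of length $4$, say) can run entirely along the ``long'' arc of $C_{y_2}$, touching $C_{y_1}$ only at its endpoints $u$ and $u'$. Then $H$ contains five neighbors of $y_2$ but only two neighbors of your chosen $w=y_1$, so your argument yields nothing for $w$ even though the lemma's conclusion is true (witnessed by $y_2$). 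Relatedly, your step-(4) sub-claim that when $P$ leaves $C_w$ its ``departure'' and ``return'' vertices in $N(w)$ differ from $u,u'$ is simply false in this configuration: $P$ departs $C_w$ at $u$ itself and re-enters only at $u'$. The fix, which is what the paper does, is to keep both common neighbors in play and conclude that $H$ contains at least four neighbors of $y_1$ \emph{or} of $y_2$: a shortest $u$--$u'$ path in $H$ has length $3$ or $4$ (it is $>2$ by assumption and $\le 4$ by the diameter hypothesis), edge-width $\ge 7$ certifies the standard hexagonal patch, and a finite check shows $P$ must run along an arc of $C_{y_1}$ or of $C_{y_2}$, forcing $\ge 4$ vertices of one of those links into $H$. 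Your earlier slip---treating a length-$\le 3$ $u$--$u'$ path as a contradiction when local connectivity demands distance exactly $2$---you noticed and corrected mid-stream, but the fact that it arose is a signal to track precisely which distance is required at each step; also note $a\ge 2$ (not $a\ge 1$) in your arc-length discussion, since $a=1$ would make $u,u'$ adjacent.
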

First, suppose instead that there exists a vertex $w$ such that $H$ contains at least
four neighbors of $w$, but excludes $w$.  By Pigeonhole, $H$ contains some pair
of non-adjacent vertices such that $w$ is their only common neighbor.  So, $H$
is \emph{not} locally connected.  Thus, the hypothesis on $w$ in
Lemma~\ref{loc-help-lem} is necessary.

\begin{proof}
Suppose there exist $w,x\in V(H)$ with $d_G(w,x)=2$ but
$d_H(w,x)>2$.  Denote the common neighbor(s) in $G$ of $w$ and $x$
by $y_1$ and $y_2$ (if it exists).
Since $d_H(w,x)>2$, we have $y_1,y_2\notin V(H)$.  Since $H$ has diameter at
most four, a short case analysis shows that $H$ contains at least four neighbors
of either $y_1$ or $y_2$.  (Since $H$ has diameter at most 4, but $G$ has
edge-width at least 7, we can essentially ignore the possibility of
non-contractible cycles creating problems.)
\end{proof}

\begin{remark}
Often when we claim, in later proofs, that a template is good, we will be
implicitly using Lemmas~\ref{good-template-lem} and~\ref{loc-help-lem}.
A \Emph{triple} is an independent set of size 3 with a
common neighbor.  For example, in each picture in Figure~\ref{4template-ex-fig}
a triple comprises the three orange neighbors of vertex 1.
A triple itself is not a good 4-template.  However, each
triple is a subset of 12 good 4-templates.  These include the two good
4-templates shown in Figure~\ref{4template-ex-fig}, along with 10 others that
arise by rotation and reflection.  Further, we show in Lemma~\ref{triple-lem}
that if $\vph$ contains a triple, then $\vph$ is 5-equivalent to some
5-coloring that contains a good 4-template.  Because triples have fewer
vertices than any good template, they are easier to work with.  Thus, we aim to
show that every 5-coloring is 5-equivalent to a 5-coloring containing a triple.
This motivates the following lemma.
\end{remark}

\begin{lem}
Let $T$ be a good template for $G$, as witnessed by a subgraph $H$ in
Lemma~\ref{good-template-lem}.  Let $v\notin V_H$ be a vertex such that
$T\cup\{v\}$ is also a good template, with $v$ as its own color.  
For each 5-coloring $\vph_0$ of $T$ and each $\alpha\in\{1,\ldots,5\}$ such that
using $\alpha$ on $v$ extends $\vph_0$ to a proper 5-coloring $\vph_0'$ of
$T+v$, there exists a proper 5-coloring of $G$ that extends $\vph_0'$.
This $v$ is called a \emph{bonus vertex}\aside{\emph{bonus vertex}} for $T$.
In particular, if there
exists some other color $C$ in $T$ such that $C\cup\{v\}$ includes a triple,
then every 5-coloring $\vph$ containing $T$ is 5-equivalent to a 5-coloring
containing a triple. 
\end{lem}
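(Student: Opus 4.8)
The plan is to prove the two assertions in turn; the first (the defining property of a bonus vertex) carries all the weight, and the second follows quickly.

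\emph{The bonus-vertex property.} Since $v\notin V_H$ we have $v\notin V(T)$, so $v$ is not contracted and survives in $G_T$; and since $\{v\}$ is a singleton color, $G_{T\cup\{v\}}=G_T$. We use the hypothesis that $T\cup\{v\}$ is good in the form supplied by Lemma~\ref{good-template-lem}: $G_T$ has a $4$-degeneracy order whose final $|V_T|+1$ vertices are exactly $V_T\cup\{v\}$. Next observe that a proper $5$-coloring $\vph_0'$ of $T+v$ is the same thing as a proper $5$-coloring of the graph $G_T[V_T\cup\{v\}]$: two colors of $T$ (or a color of $T$ and $v$) are adjacent in $G_T$ precisely when some vertex of one is $G$-adjacent to some vertex of the other. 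So I would take such a $\vph_0'$ as a precoloring of the tail $V_T\cup\{v\}$ of the degeneracy order and complete it greedily, coloring the remaining vertices in the reverse of the degeneracy order. By definition of a $4$-degeneracy order, each such vertex $u$ has at most $4$ neighbors that come later in the order; these are exactly the already-colored neighbors of $u$ when we reach it, so some color in $\{1,\dots,5\}$ is free. Since the process never recolors a vertex of $V_T\cup\{v\}$, the resulting proper $5$-coloring of $G_T$ lifts to a proper $5$-coloring of $G$ extending $\vph_0'$: give every vertex of a color $C'$ of $T$ the color of the vertex of $V_T$ representing $C'$, and keep $\vph_0'(v)$ on $v$.

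\emph{The second assertion.} Let $\vph$ be any $5$-coloring containing $T$, and let $\tau$ be a triple contained in $C\cup\{v\}$. If $\tau\subseteq C$, then $\tau$ is already monochromatic in $\vph$ because $\vph$ contains $T$, so $\vph$ itself contains a triple and we are done. Otherwise $\tau=\{a,b,v\}$ with $a,b\in C$; since $C\cup\{v\}$ is independent, $v$ is nonadjacent to $C$, so in $G_T$ the vertex $v$ is nonadjacent to the vertex $c_C\in V_T$ representing $C$. Hence $G_T[V_T\cup\{v\}]$ admits a proper $5$-coloring in which $v$ and $c_C$ receive a common color (this is immediate, as the graph has only $|V_T|+1$ vertices). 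Viewing this coloring as a coloring $\vph_0'$ of $T+v$ and extending it, via the bonus-vertex property, to a proper $5$-coloring $\psi$ of $G$, we obtain a coloring that contains $T$ --- hence is $5$-equivalent to $\vph$ by Lemma~\ref{template-lem} --- and that satisfies $\psi(a)=\psi(b)=\psi(v)$, so $\psi$ contains the triple $\tau$. (Alternatively, one can observe directly that $(T\setminus\{C\})\cup\{C\cup\{v\}\}$ is itself a good template, since identifying $v$ with $c_C$ in $G_T$ preserves the degeneracy order above; then $\psi$ may be taken to be any $5$-coloring of $G$ containing this template.)

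The one substantive point is in the bonus-vertex property: extending a precoloring of $T+v$ requires $V_T\cup\{v\}$, not merely $V_T$, to sit at the tail of a $4$-degeneracy order of $G_T$, and arranging this is precisely why the hypothesis demands that $T\cup\{v\}$ --- and not just $T$ --- be good. The remaining ingredients are routine: identifying a proper coloring of $T+v$ with a proper coloring of $G_T[V_T\cup\{v\}]$; running the greedy completion; and $5$-coloring the small graph $G_T[V_T\cup\{v\}]$ with $v$ and $c_C$ identified, which tacitly uses that the templates in play have only a few colors.
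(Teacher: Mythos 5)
Your proof is correct and follows essentially the same route as the paper's: use the $4$-degeneracy order of $G_T$ (with $V_T\cup\{v\}$ at its tail) to greedily extend any proper $5$-coloring of $T+v$ to all of $G$, then invoke Lemma~\ref{template-lem} to conclude $5$-equivalence among colorings containing $T$. You are a bit more explicit than the paper about choosing $\vph_0'$ so that $v$ and the color class $C$ share a color (and your parenthetical observation that $(T\setminus\{C\})\cup\{C\cup\{v\}\}$ is itself a good template is a tidy way to justify that this choice is available), but the substance of the argument is the same.
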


\begin{proof}
For an example, see Figure~\ref{helper-fig}, where $T$ has the two colors
$\{1,2,3,4\}$ and $\{6,9\}$, $H$ is
induced by $1,\ldots,11$ and $v=12$.
The hypothesis that $v$ is its own color implies that $v$ is not identified with
any other vertex of $T$.  Thus, $v$ can appear last in the degeneracy order, and
is not required to have the same color as any other vertex in $T$.
Since $T\cup\{v\}$ is a good template, every proper 5-coloring of $T\cup\{v\}$,
that respects its colors, extends to a 5-coloring of $G$ (we simply color the
vertices outside $T\cup\{v\}$ in the reverse of the 4-degeneracy order).  
Fix a proper 5-coloring $\vph_0'$ of 
$T\cup \{v\}$,
and let $\vph$ be an extension
of this coloring of $T\cup\{v\}$ to a 5-coloring of $G$.  Let $\vph_1$ be any
other 5-coloring of $G$ that contains $T$.  Since $\vph_1$ and $\vph$ both
contain $T$, they are 5-equivalent.  If $C\cup\{v\}$ includes a triple, then
$\vph$ contains this triple, so $\vph_1$ is
indeed 5-equivalent to a 5-coloring that contains a triple.
\end{proof}

\subsection{Shifted Triangulated Toroidal Grids}

We denote each vertex in a triangulated toroidal grid $T[a\times b]$ by an
ordered pair $(i,j)$ with $i\in\{1,\ldots,a\}$ and $j\in \{1,\ldots,b\}$;
vertices are numbered as entries of a matrix, with $(1,1)$ in the top left and
$(a,b)$ in the bottom right; see Figure~\ref{tri-toroidal-fig}.
So $(i,j)$ is adjacent to $(i,j-1),(i-1,j), (i-1,j+1), (i,j+1), (i+1,j),
(i+1,j-1)$ with arithmetic modulo $a$ and $b$, as appropriate. 
Let $T[a\times b,c]$ denote a \Emph{triangulated
$a\times b$ toroidal grid with shift $c$}.  The vertex
set is the same as that for $T[a\times b]$, and the edge set is the same except
that edges from column $b$ to column 1 are ``shifted'' by $c$.  More precisely,
each vertex $(i,b)$ is adjacent to $(i+c-2,1), (i+c-1,1), (i+1,b), (i+1, b-1),
(i,b-1), (i-1,b)$.  So $T[a\times b]=T[a\times b,1]$.  
The following useful result\footnote{This
characterization was stated by Altshuler~\cite{altshuler}, but his paper did not
give a complete proof. About 10 years later, Negami published a
proof~\cite{negami}, apparently unaware of the work of Altshuler.}
characterizes all 6-regular toroidal graphs.

\addtocounter{thm}{1}
\begin{thm}[\cite{altshuler, negami}]
Every 6-regular toroidal graph has the form $T[a\times b,c]$ for some positive
integers $a,b,c$ with $c\le a$.
\label{6reg-lem}
\end{thm}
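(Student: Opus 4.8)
The plan is to reconstruct the embedding from the structure of a 6-regular toroidal graph, working with the universal cover. First I would observe that since $G$ is 6-regular and embedded in the torus, Euler's formula forces every face to be a triangle (a 6-regular graph on $n$ vertices has $3n$ edges, so $2n$ faces by $n - 3n + f = 0$, and $2 \cdot 3n = \sum \text{face lengths} \ge 3f = 6n$ with equality, so $G$ is a triangulation). Thus the universal cover $\widetilde{G}$ is an infinite 6-regular triangulation of the plane, which (by a standard and elementary argument — each vertex link is a 6-cycle, so the triangles fit together exactly as in the regular triangular lattice) must be isomorphic to the standard triangular lattice $\Lambda$ on $\mathbb{Z}^2$, where $(i,j) \sim (i\pm 1, j), (i, j\pm 1), (i+1,j-1), (i-1,j+1)$. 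Fix such an isomorphism, identifying the vertex set of $\widetilde G$ with $\mathbb{Z}^2$.

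Next I would analyze the deck transformation group $\Gamma \cong \pi_1(\text{torus}) \cong \mathbb{Z}^2$. Each deck transformation is an automorphism of $\Lambda$, and the automorphism group of $\Lambda$ is the semidirect product of translations with the dihedral group of order $12$ (the symmetries of the hexagonal lattice fixing the origin). Since $\Gamma$ is abelian, torsion-free, and acts freely, its image in the point-group quotient must be trivial or generated by small-order elements that commute; the key point is that $\Gamma$ must consist of translations of $\mathbb{Z}^2$ — any nontrivial rotational or reflectional part would either have a fixed point (violating freeness) or fail to generate a rank-2 free abelian group acting cocompactly with quotient a torus rather than a Klein bottle. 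So $\Gamma$ is a rank-2 lattice of translations $\langle (a', b'), (c', d') \rangle \le \mathbb{Z}^2$ of finite index, and $G = \Lambda / \Gamma$. It then remains to put the basis of $\Gamma$ into a normal form: using the column operations available (change of basis in $\Gamma$) together with the lattice symmetries of $\Lambda$ itself, I would reduce the sublattice to one generated by $(a, 0)$ and $(c, b)$ with $0 < c \le a$ — essentially the Hermite normal form of the index-$ab$ sublattice, adjusted by the point-group symmetries to guarantee $c \le a$. Unwinding the identification $\Lambda/\langle (a,0),(c,b)\rangle$ against the adjacency rule then gives exactly the description of $T[a\times b, c]$ from the paragraph preceding the theorem.

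The main obstacle is the second step: showing that $\Gamma$ contains no orientation-reversing or rotational deck transformations, i.e. that it acts purely by lattice translations. This is where one must use that the quotient surface is the torus (orientable, with $\pi_1 = \mathbb{Z}^2$) rather than the Klein bottle, and rule out the sporadic possibility of a glide-type symmetry sneaking into a finite-index subgroup. I would handle this by a careful case check on the possible images of $\Gamma$ under the quotient $\mathrm{Aut}(\Lambda) \to D_6$: a free $\mathbb{Z}^2$-action with torus quotient forces the image to be trivial, since any nontrivial finite-order element of the point group, combined with a translation, is either conjugate to a genuine rotation/reflection (hence has a fixed point on the torus, contradicting freeness) or yields the wrong quotient. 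Once translation-only is established, the normal-form reduction is just elementary lattice arithmetic and bookkeeping against the six-neighbor adjacency, matching the shifted-grid formula; I would present that part briefly and refer to Negami~\cite{negami} for the exhaustive details rather than grinding through every column operation.
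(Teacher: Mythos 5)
The paper does not give its own proof of this theorem: it simply cites it as Theorem~3.4 of Negami~\cite{negami} (and credits Altshuler~\cite{altshuler}), with the refinement $c\le a$ drawn from a remark in Negami's paper. So there is no in-paper argument to compare against word for word. Negami's actual proof --- which the paper briefly summarizes later, in the discussion of Claim~1 inside the Main Corollary --- is combinatorial: starting at any vertex, one follows a \emph{straight path} (at each step taking the neighbor three positions later in the rotation order than the one just arrived from) until it closes up, and the three families of such closed straight walks through a vertex determine the parameters $a$, $b$, and the shift $c$. Your proposal takes a genuinely different, topological route: lift to the universal cover, identify it combinatorially with the triangular lattice $\Lambda\subseteq\mathbb{Z}^2$, argue that the deck group $\pi_1\cong\mathbb{Z}^2$ acts on $\Lambda$ by pure translations (rotational parts would have fixed points, contradicting freeness; glide-reflection parts would make the quotient a Klein bottle), and then put the resulting rank-2 translation sublattice into Hermite normal form $\langle(a,0),(c,b)\rangle$ with $0< c\le a$. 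Both routes are correct. Negami's is self-contained and entirely graph-theoretic, which is why a combinatorics paper prefers it; yours invokes covering-space machinery but is conceptually transparent and makes the normal form of the answer appear inevitable rather than discovered. Two steps in your sketch deserve a bit more care if you were to write this out fully: the uniqueness of the simply-connected 6-regular triangulation (the ``development'' argument propagating the 6-cycle link condition outward needs to be stated, not just asserted), and the dichotomy nontrivial point-group part $\Rightarrow$ fixed point or Klein bottle (a one-line verification on the five conjugacy classes of nontrivial elements of $D_6$). With those filled in, your argument is sound and $c\le a$ does indeed drop out of the normal-form reduction together with the available lattice symmetries, exactly as the paper's citation asserts.
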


This is Theorem~3.4~in~\cite{negami}; the condition $c\le a$ draws on the
comment in~\cite{negami} at the bottom of page 169.

A \Emph{circulant} \EmphE{\mbox{$C_n[1,r,r+1]$}}{0mm} is a 6-regular graph\footnote{When
$n\in\{2r,2r+1,2r+2\}$ it is actually a 6-regular multigraph.} with vertex set
$\{1,\ldots,n\}$ and with $i$ and $j$ adjacent if $i-j\in\{\pm 1,\pm r,\pm
(r+1)\}$.  Each circulant $C_n[1,r,r+1]$ has a natural embedding as a
triangulation of the torus: begin with the hamiltonian cycle consisting of edges
of ``length'' 1, wrapping around the torus in one direction, and now embed the
remaining edges, each wrapping around the torus in the other direction.
Building on work of Collins and Hutchinson~\cite{CH}, Yeh and Zhu~\cite{YZ}
proved the following.

\begin{thm}[\cite{YZ,CH}]
\label{6reg-4col}
All 6-regular toroidal graphs are 4-colorable, with the following exceptions:
\begin{enumerate}
\item $G\in \{T[3\times 3,2]$, $T[3\times 3,3]$, $T[5\times 3,2]$, $T[5\times
3,3]$, $T[5\times 5,3]$, $T[5\times 5,4]\}$.
\item $G=T[m\times 2,1]$ with $m$ odd.
\item $G=C_n[1,r,r+1]$ and $n\in\{2r+2,2r+3,3r+1,3r+2\}$ and $n$ is not divisible by 4.
\item $G=C_n[1,2,3]$ and $n$ is not divisible by 4.
\item $G = C_n[1,r,r + 1]$ and
$(r,n)\in\{(3,13),(3,17),(3,18),(3,25),(4,17),(6,17),(6,25), (6, 33), (7, 19)$,
$(7, 25)$, $(7, 26), (9, 25), (10, 25), (10, 26), (10, 37), (14, 33)\}$.
\end{enumerate}
\end{thm}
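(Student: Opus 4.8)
The plan is to combine the structural classification of Theorem~\ref{6reg-lem} with two complementary ingredients: explicit periodic colorings for the generic parameters, and period-forcing arguments (together with a bounded amount of finite checking) for the exceptional ones. By Theorem~\ref{6reg-lem} it suffices to analyze $G=T[a\times b,c]$, and I would do this on the universal cover: lift $G$ to the infinite triangular lattice $\Lambda$, with vertex set $\mathbb{Z}^2$ and $(i,j)$ adjacent to $(i,j\pm1)$, $(i\pm1,j)$, $(i-1,j+1)$, $(i+1,j-1)$, so that $T[a\times b,c]$ is the quotient of $\Lambda$ by the rank-$2$ sublattice $L=\langle(a,0),\,(1-c,b)\rangle$. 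A proper coloring of $G$ is then exactly an $L$-periodic proper coloring of $\Lambda$, and the whole question becomes: which periodic patterns are compatible with $L$?

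For the generic case I would begin with $3$-colorings. The triangular lattice is uniquely $3$-colorable up to permuting colors, the pattern being $(i,j)\mapsto(i-j)\bmod 3$; hence $G$ is $3$-colorable exactly when this single pattern is $L$-periodic, which is one congruence condition on $a,b,c$ and already disposes of a positive-density set of parameters. When no $3$-coloring descends I would fall back on a short catalogue of periodic $4$-colorings---for example, colorings obtained from the mod-$3$ pattern by overwriting a sparse periodic set with the fourth color, or colorings determined by $(i\bmod p,\,j\bmod q)$ for small $p,q$---and check that for every residue class of $(a,b,c)$ outside a bounded set, at least one such pattern is proper and $L$-periodic. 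The circulant reformulation $C_n[1,r,r+1]$ is useful here: a $4$-coloring of it is just a cyclic word $c_1\cdots c_n$ over $\{1,2,3,4\}$ with $c_i\ne c_{i+1},c_{i+2},c_{i+3}$, and such a word is easy to produce whenever $n$ is not too small relative to $r$ and avoids a few residues.

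For the lower bounds I would show that in each exceptional family a hypothetical proper $4$-coloring is forced to be periodic with a period incompatible with the torus. The cleanest case is $C_n[1,2,3]$: every four consecutive vertices form a $K_4$, so each window of four vertices uses all four colors, which forces $c_{i+4}=c_i$ and hence $4\mid n$. The same style of argument handles $T[m\times 2,1]$ with $m$ odd---consecutive color pairs $\{c(i,1),c(i,2)\}$ must be complementary $2$-subsets of $\{1,2,3,4\}$, forcing even period---and the families $C_n[1,r,r+1]$ with $n\in\{2r+2,2r+3,3r+1,3r+2\}$ and $4\nmid n$, where a parallel periodicity obstruction rules out any $4$-coloring. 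The finitely many sporadic graphs in items~(1) and~(5) are small enough to settle by a direct (or computer-assisted) exhaustive search; in several of them a dense small subgraph already yields $\chi(G)\ge 5$.

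I expect the main obstacle to be the bookkeeping in the positive direction: proving that the finite menu of periodic patterns genuinely covers every $(a,b,c)$ outside the stated list, with no spurious extra exceptions and nothing missed. The difficulty is that the shift $c$ couples the two torus directions, so the congruences under which a given pattern is $L$-periodic interact, and pinning the exceptional set down to exactly items~(1)--(5) requires analyzing these congruences simultaneously with the small-$a$, small-$b$, and $n$-close-to-$r$ boundary cases. A secondary subtlety is reconciling the $T[a\times b,c]$ and $C_n[1,r,r+1]$ parametrizations---including graphs that occur in more than one item---so that the patterns (stated in grid coordinates) and the obstructions (stated in circulant coordinates) line up on the overlap.
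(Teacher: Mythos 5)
The paper does not prove Theorem~\ref{6reg-4col}; it is cited verbatim from Yeh and Zhu~\cite{YZ} (building on Collins and Hutchinson~\cite{CH}), and the only proof-like material in the paper is a one-paragraph remark that the exceptional graphs in parts~(1)--(4) satisfy $\alpha(G) < |G|/4$ (hence $\chi(G)>4$), while part~(5) is handled ``ad hoc'' in~\cite{YZ}. So there is no in-paper argument to compare against, and the bar for your proposal is whether it would actually reconstitute the cited theorem.

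It would not, as it stands, and you essentially say so yourself. The positive direction --- that every $T[a\times b,c]$ outside the five listed families \emph{is} 4-colorable --- is the entire content of the Yeh--Zhu and Collins--Hutchinson papers, and your plan reduces it to ``produce a finite menu of periodic patterns and check that it covers every residue class of $(a,b,c)$,'' which you then flag as the unresolved bookkeeping. That is a gap, not a sketch: the theorem's exception list is delicate (note that items~(1), (3), and~(5) are not defined by clean congruence conditions alone), and nothing in the proposal shows that your menu exists, is finite, or terminates in exactly this list rather than a different one. On the negative direction, your period-forcing argument for $C_n[1,2,3]$ is correct (four consecutive vertices form a $K_4$, forcing $c_{i+4}=c_i$ and hence $4\mid n$), but this is a different mechanism from the one the paper points to for parts~(1)--(4), namely the elementary bound $\alpha(G)<|G|/4$. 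Your sketch for $T[m\times 2,1]$ is also shakier than you suggest: with $b=2$ and $c=1$ the adjacency rule gives parallel edges between $(i,1)$ and $(i,2)$, so the ``complementary pairs'' picture needs care, and the independence-number count ($2m$ vertices, $\alpha\le (m-1)/2\cdot\text{something}$) is both shorter and what the cited source uses. Your ``parallel periodicity obstruction'' for the $n\in\{2r+2,2r+3,3r+1,3r+2\}$ families is only asserted, and for part~(5) you fall back to exhaustive search, which is consistent with~\cite{YZ} but not an argument. In short: the universal-cover framing is a reasonable way to \emph{think} about these graphs, but the proposal does not close either direction of the theorem, and the paper itself deliberately does not attempt to --- it imports the result.
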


It is easy to prove that the graphs in Theorem~\ref{6reg-4col} are not
4-colorable.  More specifically, if $\alpha(G)$ denotes the indpendence number
of $G$, then simple arguments show that each of the graphs in parts
(1--4) have $\alpha(G)<|G|/4$, and thus, $\chi(G)\ge |G|/\alpha(G) >
4$; details are given in Section~3 of~\cite{YZ}.  
(The proof for graphs in (5) is ad hoc.)
Thus, the result in Theorem~\ref{6reg-4col} is best possible.
\begin{lem}
If $G$ is a 6-regular toroidal graph with edge-width at least 6, 
and $G\notin \{C_{26}[1,10,11]$, $C_{37}[1,10,11]\}$, then $G$ is 4-colorable.
\label{4colorable-lem}
\end{lem}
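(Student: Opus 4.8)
The plan is to derive Lemma~\ref{4colorable-lem} directly from Theorem~\ref{6reg-4col} by checking that each of the exceptional families listed there either has edge-width less than $6$ or else appears on the small explicit list $\{C_{26}[1,10,11], C_{37}[1,10,11]\}$. Since the hypothesis of Lemma~\ref{4colorable-lem} is that $G$ is a $6$-regular toroidal graph of edge-width at least $6$ that is not one of those two circulants, it suffices to show that every graph in parts (1)--(5) of Theorem~\ref{6reg-4col} other than $C_{26}[1,10,11]$ and $C_{37}[1,10,11]$ has edge-width at most $5$. Then $G$ cannot be any of the exceptions, so $G$ is $4$-colorable.

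I would organize the verification part by part. For part (1): each graph $T[a\times b, c]$ with $a,b\le 5$ has a non-contractible cycle of length $\min(a,b)\le 5$ coming from a column (the cyclic sequence of vertices $(i,j)$ for fixed $j$, which is non-contractible), so edge-width at most $5$. For part (2): $T[m\times 2,1]$ contains a non-contractible $2$-cycle (or, since it is really a multigraph here, a very short non-contractible cycle) from the two columns, so edge-width at most $2$. For parts (3), (4): a circulant $C_n[1,r,r+1]$ with $n\in\{2r+2,2r+3,3r+1,3r+2\}$ or with $(r,n)$ as in (4) has edge-width bounded by a short combination of steps of size $r$ and $r+1$ that sum (modulo $n$) to $0$ — concretely, using the identity that a few $\pm(r+1)$ and $\mp r$ steps telescope, one gets a non-contractible cycle of length at most $5$ (e.g.\ when $n=2r+2$, two steps of $r+1$ close up; when $n=3r+1$ or $3r+2$, three steps of $r$ or $r+1$ together with one or two unit steps close up). For part (5): each listed pair $(r,n)$ must be checked individually; in every case except $(10,26)$ and $(10,37)$ one finds a short non-contractible cycle of length at most $5$, while $C_{26}[1,10,11]$ and $C_{37}[1,10,11]$ are exactly the two graphs excluded in the statement, and these do have edge-width at least $6$ (which is why they must be listed as genuine exceptions rather than discarded). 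Throughout, I would use the fact (implicit in the circulant embedding described before Theorem~\ref{6reg-4col}) that a closed walk in $C_n[1,r,r+1]$ using $a$ steps of $\pm 1$, $b$ steps of $\pm r$, and $c$ steps of $\pm(r+1)$ is non-contractible precisely when the corresponding element of the first homology — captured by the pair of winding numbers around the two generators of the torus — is nonzero, and the shortest such walk has length equal to the edge-width.

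The main obstacle is the case analysis for part (5): the sixteen pairs $(r,n)$ are somewhat irregular, and one must be careful to certify, for the two retained circulants, that there is genuinely no non-contractible cycle of length $\le 5$ (an existence-of-short-cycle claim is easy to verify, but a nonexistence claim requires checking all short combinations of the three generator lengths and their negatives modulo $n$). A clean way to handle this uniformly is to observe that edge-width at most $5$ for a circulant $C_n[1,r,r+1]$ follows whenever $n$ divides one of the finitely many values $\{2, 2r, 2r+2, 2r+1, 2r+3, 3r, 3r+3, 3r+1, 3r+2, 3r-1, \ldots\}$ arising from signed sums of at most five generators with nonzero winding — but here one must also confirm the winding number is nonzero, not merely that the displacement along the Hamiltonian cycle vanishes. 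Since this is a finite check and the paper only needs the conclusion, I would present the generic families (1)--(4) with one-line edge-width arguments and then simply remark that a direct check of the sixteen pairs in (5) leaves precisely $C_{26}[1,10,11]$ and $C_{37}[1,10,11]$ with edge-width $\ge 6$, which is why they are the two graphs named in the lemma.
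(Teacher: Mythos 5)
Your proposal is correct and follows essentially the same route as the paper: invoke Theorem~\ref{6reg-4col} and verify that every exceptional graph other than $C_{26}[1,10,11]$ and $C_{37}[1,10,11]$ has edge-width at most $5$ (the paper handles parts (1)--(4) by the same short observations, dispatches the $r\le 4$ cases of part (5) via the remark that $C_n[1,r,r+1]$ always has edge-width at most $r+1$, and then exhibits one explicit short non-contractible cycle for each remaining pair $(r,n)$). One small note: the nonexistence check you flag as an obstacle --- certifying that the two retained circulants genuinely have edge-width $\ge 6$ --- is not needed for the lemma, since its hypothesis simply excludes those two graphs; only the ``every other exception is short'' direction is required.
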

\begin{proof}
This is an easy consequence of Theorem~\ref{6reg-4col}.
Each graph in (1), (2), (3), and (4) has edge width at most 5, 3, 3, and 3
(respectively).  So we only consider graphs in (5).  Note that $C_n[1,r,r+1]$
always has edge-width at most $r+1$.  So we assume that $r\ge 6$.  When
$(r,n)=(6,17)$, we have the non-contractible cycle with successive edge lengths
$6,6,6,-1$. When $(r,n)=(6,25)$, we have $6,6,6,7$.  When $(r,n)=(6,33)$, we
have $6,6,7,7,7$.  When $(r,n)=(7,19)$, we have $7,7,7,-1,-1$.  
When $(r,n)=(7,25)$, we have $8,8,8,1$.  
When $(r,n)=(7,26)$, we have $8,8,8,1,1$.  
When $(r,n)=(9,25)$, we have $9,9,9,-1,-1$.  
When $(r,n)=(10,25)$, we have $11,11,1,1,1$.  
When $(r,n)=(14,33)$, we have $15,15,1,1,1$.  
\end{proof}

\begin{lem}
Let $G$ be a 6-regular toroidal graph with edge-width at least 7.  If $\vph_1$
and $\vph_2$ are 5-colorings of $G$ and each contains some good monochromatic
template (possibly different templates in $\vph_1$ and $\vph_2$), then $\vph_1$
and $\vph_2$ are 5-equivalent.
\label{main-helper}
\end{lem}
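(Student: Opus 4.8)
The statement of Lemma~\ref{main-helper} is almost identical to Lemma~\ref{switch-lem}, except that Lemma~\ref{switch-lem} requires $G$ to be 4-colorable, whereas here we only assume edge-width at least 7. So the plan is to reduce to Lemma~\ref{switch-lem} by invoking the 4-colorability results, handling the two exceptional circulants separately.

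\begin{proof}[Proof plan]
The plan is to split into two cases according to whether $G$ is 4-colorable. First, suppose $G$ is 4-colorable. Then since $G$ has edge-width at least $7\ge 7$, and $\vph_1,\vph_2$ each contain a good monochromatic template, Lemma~\ref{switch-lem} applies directly and gives that $\vph_1$ and $\vph_2$ are 5-equivalent. So the only work is the non-4-colorable case.

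By Lemma~\ref{4colorable-lem}, the only 6-regular toroidal graphs with edge-width at least 7 (hence at least 6) that fail to be 4-colorable are $G=C_{26}[1,10,11]$ and $G=C_{37}[1,10,11]$. So it suffices to prove the lemma for these two specific graphs. For each of them, I would exhibit a concrete good monochromatic template $T^*$ together with the witnessing well-behaved subgraph $H$ of Lemma~\ref{good-template-lem} (using Lemma~\ref{loc-help-lem} to check that $H$ is locally connected, and using vertex-transitivity of $G$ to place $T^*$ wherever convenient). Since $C_n[1,10,11]$ is edge-transitive under the rotation $i\mapsto i+1$, a single such template shows there are many translates of it. Then I want to argue that $\vph_1$ is 5-equivalent to a coloring containing one fixed translate of $T^*$, and likewise $\vph_2$, after which Lemma~\ref{template-lem} finishes.

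Here is how to make the middle step work. Given $\vph_1$, which contains some good monochromatic template $S_1$ using some color, say red: I want to show $\vph_1$ is 5-equivalent to a 5-coloring that contains $T^*$ monochromatically. One clean way: recolor so as to free up a color on the vertices of $T^*$. More concretely, since $G-T^*$ is $4$-colorable when we contract $T^*$ (that is what ``good'' means — $G_{T^*}$ is 4-degenerate, hence 5-colorable, in fact we get a 5-coloring of $G$ monochromatic on $T^*$ using only 5 colors), there is at least one 5-coloring $\psi^*$ of $G$ containing $T^*$. By Lemma~\ref{template-lem}, all 5-colorings of $G$ containing $T^*$ are 5-equivalent, so it is enough to show $\vph_1$ is 5-equivalent to \emph{some} 5-coloring containing $T^*$. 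For this I would use the good template $S_1$ in $\vph_1$: by Lemma~\ref{template-lem} again, $\vph_1$ is 5-equivalent to every 5-coloring containing $S_1$; pick the one that agrees with a fixed 4-coloring-free-of-green on $G-S_1$ and is green on $S_1$ — but wait, $G$ is not 4-colorable here, so this shortcut (the one used in Lemma~\ref{switch-lem}) is unavailable.

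So for the two circulants the argument must instead connect the template in $\vph_1$ to the template $T^*$ by Kempe swaps directly, not via a global 4-coloring. The cleanest route: show that for each of $C_{26}[1,10,11]$ and $C_{37}[1,10,11]$, the auxiliary graph whose vertices are the translates of the fixed good monochromatic template $T^*$, with two translates adjacent when there is a 5-coloring containing both, is connected; equivalently, show any two overlapping or nearby translates can be simultaneously 5-colored, so by Lemma~\ref{template-lem} any 5-coloring containing one translate is 5-equivalent to one containing an adjacent translate. Combined with the fact (to be verified by the small-case analysis in Section~\ref{templates-sec}, or directly here) that every 5-coloring of these two circulants is 5-equivalent to one containing \emph{some} translate of $T^*$, this yields that $\vph_1$ and $\vph_2$ are both 5-equivalent to a coloring containing one fixed translate, and hence to each other. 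The main obstacle is the finite but genuine bookkeeping for these two circulants: identifying a good monochromatic template, verifying its well-behavedness via Lemmas~\ref{good-template-lem} and~\ref{loc-help-lem}, and checking the local 5-colorability needed to walk between translates. Everything else is a direct appeal to Lemmas~\ref{switch-lem}, \ref{template-lem}, and~\ref{4colorable-lem}.
\end{proof}
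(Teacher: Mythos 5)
Your reduction — split on 4-colorability via Lemma~\ref{4colorable-lem}, apply Lemma~\ref{switch-lem} in the 4-colorable case, handle the exceptional circulants separately, and note that the global-4-coloring shortcut fails there — matches the paper's opening moves. You did overlook that $C_{26}[1,10,11]$ has a non-contractible 6-cycle (edge lengths $11,11,1,1,1,1$), so the edge-width hypothesis already excludes it and only $C_{37}[1,10,11]$ remains.

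The genuine gap is in your plan for this remaining circulant. You fix a good monochromatic template $T^*$ and propose to prove the translate graph of $T^*$ is connected and finish via Lemma~\ref{template-lem}. But the hypothesis only supplies \emph{some} good monochromatic template $S_1$ inside $\vph_1$, and $S_1$ need not be a translate of $T^*$ — Figure~\ref{4template-ex-fig} already exhibits two nonisomorphic good 4-templates. Your fallback claim, that ``every 5-coloring of these circulants is 5-equivalent to one containing some translate of $T^*$,'' is exactly the sort of statement that would need proving here: it is not what Lemma~\ref{main-lem} gives (that yields a good 4-template, not a translate of a fixed $T^*$), and once you had it the hypothesis of the present lemma would be superfluous, so you would essentially be re-proving the Main Theorem for $C_{37}[1,10,11]$ from scratch. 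The paper instead uses rotations of a fixed near-4-coloring $\vph_0$ (color $i\bmod 4$ on vertex $i$ for $i<37$ and a fifth color, green, on vertex $37$) as a common hub: because $4\cdot 7=28<37$, for any good 4-template $T_1$ there is a rotation $\vph_0^s$ whose unique green vertex avoids $\bigcup_{v\in T_1}N[v]$, so one can recolor $T_1$ green — landing on a coloring that contains $T_1$, hence is 5-equivalent to $\vph_1$ by Lemma~\ref{template-lem} — and then undo that recoloring vertex by vertex to reach $\vph_0^s$. This connects every 5-coloring containing a good 4-template to the rotations of $\vph_0$ and sidesteps the need to relate $S_1$ to your fixed $T^*$.
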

\begin{proof}
When $G$ is 4-colorable, the result follows from Lemma~\ref{switch-lem}.
So, by Lemma~\ref{4colorable-lem}, we only need to consider
$G\in\{C_{26}[1,10,11],C_{37}[1,10,11]\}$.  Further, $C_{26}[1,10,11]$ has a
non-contractible cycle of length 6; it has edge lengths $11,11,1,1,1,1$.  So
assume that $G=C_{37}[1,10,11]$.  Let $\vph_0$ denote the 5-coloring of $G$ that
uses color $i\bmod 4$ on vertex $i$, for each $i<37$, and uses a fifth color on
vertex 37, call it green.  An \emph{$s$-rotation} of $\vph_0$ uses green on vertex
$s$ and uses color $i\bmod 4$ on vertex $i+s$, for each $i<37$; a
\Emph{rotation} is an $s$-rotation for some value of $s$.  If $T_1$ is a
4-template, then some rotation $\vph_0^s$ of $\vph_0$ uses green only on a
vertex that is not in $\cup_{v\in T_1}N[v]$, since $|T_1|*7=28<37$; in fact,
there exist at least $37-28=9$ of these.  So there exists a 5-coloring
$\vph_{1,s}$ of $G$ that agrees with $\vph_0^s$ outside of $T_1$
and uses green on $T_1$; again, there are at least 9 of these.  
By Lemma~\ref{template-lem}, we know that all of these 5-colorings containing
$T_1$ are 5-equivalent; further, from each of them, we can recolor the vertices
of $T_1$ to reach $\vph_0^s$.  By the transitivity of
equivalence, and the fact that $G$ is vertex transitive, every 5-coloring
containing $T_1$ is 5-equivalent to every rotation of $\vph_0$.  The same is
true of 5-colorings containing $T_2$.  Thus, $\vph_1$ and $\vph_2$ are
5-equivalent.
\end{proof}

Now we prove the Main Theorem, assuming results in
Section~\ref{templates-sec}.  For reference, we restate it.

\begin{mainthm}
If $G$ is a 6-regular toroidal graph with edge-width at least 7, then all
5-colorings of $G$ are 5-equivalent.
\end{mainthm}
\begin{proof}
Let $G$ be a 6-regular toroidal graph with edge-width at least 7.  In
Lemma~\ref{main-lem} of Section~\ref{templates-sec} we prove that every
5-coloring of $G$ is
5-equivalent to a 5-coloring that contains a good 4-template.  Now
Lemma~\ref{main-helper} proves that all 5-colorings of $G$ are 5-equivalent.
\end{proof}

An event $E(n)$ happens \emph{asymptotically almost surely} if $\Pr(E(n))\to
1$ as $n\to \infty$.

\begin{lem}
Fix a positive integer $g$.  If $G$ is a 6-regular toroidal graph on $n$
vertices chosen uniformly at random, then asymptotically almost surely the
edge-width of $G$ is at least $g$.
\end{lem}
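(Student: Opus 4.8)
The plan is to count, by Theorem~\ref{6reg-lem}, the 6-regular toroidal graphs on $n$ vertices: each such graph is of the form $T[a\times b,c]$ with $ab=n$ and $1\le c\le a$. So the total number of labeled parameter triples $(a,b,c)$ with $ab=n$ is $\sum_{a\mid n}a = \sigma(n)$ (writing $b=n/a$), which is $n^{1+o(1)}$; more crudely, it is at most $n\cdot d(n) = n^{1+o(1)}$ where $d(n)$ is the number of divisors. (One should be slightly careful that distinct triples can give isomorphic graphs and that the uniform distribution is over isomorphism classes, but this only helps: it can at most divide counts by a bounded-in-spirit factor, and in any case we will bound the bad fraction using crude upper bounds on the number of bad triples and a crude lower bound on the number of good triples, so the exact counting convention is immaterial.) The strategy is then: show the number of parameter triples giving a graph of edge-width $<g$ is $O(\sqrt n)$ (or at least $n^{1/2+o(1)}$), while the total is $\Omega(n/\log\log n)$ or so, so the bad fraction tends to $0$.

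First I would characterize, for fixed small $g$, which triples $(a,b,c)$ yield $T[a\times b,c]$ with a non-contractible cycle of length less than $g$. The point is that any short non-contractible cycle, being short (length $\le g-1$, a constant), can use only a bounded number of "vertical" steps and "horizontal" steps and "diagonal" steps; its homology class $(p,q)\in\mathbb Z^2$ (winding numbers around the two toroidal directions) therefore has $|p|,|q|\le g$. A non-contractible closed walk of length $\ell<g$ with winding numbers $(p,q)$ imposes an arithmetic constraint: roughly, moving $\ell$ unit steps in the grid of the allowed six directions must net a displacement that is $0\pmod a$ in the "row" coordinate and a prescribed value $\pmod b$ in the "column" coordinate, with the shift $c$ entering linearly when the walk crosses the seam between column $b$ and column $1$. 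Concretely: if $a\ge g$ and $b\ge g$, then any closed walk of length $<g$ whose column-winding $q$ is nonzero must cross the seam, and crossing the seam $q$ times contributes $qc$ to the net row displacement; for the walk to close up we need $a\mid(\text{net row displacement})$, i.e. $a\mid (qc + s)$ for some integer $s$ with $|s|<g$ and some nonzero $q$ with $|q|<g$. For fixed $a$ this pins $c$ down to at most $(2g)^2$ residues modulo $a/\gcd(a,q)$, hence to $O_g(1)\cdot \gcd(a,q)$ values of $c$ in $\{1,\dots,a\}$; summing the (bounded) number of $(q,s)$ pairs, the number of bad $c$ for a given $a$ is $O_g(\sqrt a)$ or better once $a$ is large. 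The remaining cases — where $a<g$ or $b<g$ — contribute at most $O_g(\sqrt n)$ triples, since then one of the two parameters is bounded by the constant $g$ and the other is $n$ divided by a bounded quantity, while the shift ranges over at most $a<g$ values. Also the circulant-type embeddings $C_n[1,r,r+1]$ with small $r$ (which have small edge-width) correspond to shifts $c$ near $a$ in the $T[a\times b,c]$ description and are again a bounded-per-$a$ phenomenon. Summing $\sum_{a\mid n}O_g(\sqrt a) = O_g(\sqrt n\, d(n)) = n^{1/2+o(1)}$.

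The second half is the easy lower bound: the total number of triples is $\sum_{a\mid n} a$, which is at least $n$ (taking $a=n$, $b=1$ — or, if one insists $b\ge 3$ for the construction to be a genuine simple graph, take $a=n/2$, $b=2$ or restrict to $n$ with a divisor in $[\sqrt n, 2\sqrt n]$ and note $\sum_{a\mid n}a \ge n^{1-o(1)}$ always, since $a=n$ is always a divisor and $\sigma(n)\ge n$). So the fraction of bad triples is at most $n^{1/2+o(1)}/n = n^{-1/2+o(1)}\to 0$. Passing from "fraction of triples" to "probability under the uniform distribution on isomorphism classes" is harmless because each isomorphism class is represented by at least one and at most $O(1)$-ish (in fact at most $4ab\le 4n$, but even $O(1)$ is false — still, at most polynomially many, and the same crude bound applies to good and bad alike, so the ratio is preserved up to the counting).

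The main obstacle is the arithmetic-geometric step: showing cleanly that, for fixed $g$ and a given large $a$, only $O_g(\sqrt a)$ (or even just $o(a)$, which already suffices after summing against $d(n)=n^{o(1)}$) values of the shift $c$ produce a non-contractible cycle of length $<g$ in $T[a\times b,c]$. This requires carefully translating "short non-contractible cycle" into a bounded system of linear congruences in $c$ with bounded coefficients, handling the interplay of the three edge-types and the seam, and checking the degenerate small-$a$, small-$b$, and large-shift (circulant) regimes separately. Everything after that is routine divisor-sum bookkeeping.
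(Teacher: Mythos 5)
Your overall strategy matches the paper's: parametrize 6-regular toroidal graphs on $n$ vertices as $T[a\times b,c]$ with $ab=n$ and $1\le c\le a$ via the Altshuler--Negami classification, bound the number of ``bad'' triples for which a short non-contractible cycle exists, lower-bound the total, and take the quotient. Your sketch of the arithmetic step is also essentially the paper's Claim~2: the closure condition for a short non-contractible cycle gives a linear relation in $c$ (the paper writes $s+ct_1+(c+1)t_2=ra$ with $r,s,t_1,t_2$ all $O_g(1)$), pinning $c$ down to boundedly many values per divisor pair. Your stated bound of $O_g(\sqrt a)$ bad shifts per $(a,b)$ is needlessly weak --- your own reasoning, at most $O_g(1)\cdot\gcd(a,q)$ values with $\gcd(a,q)<g$, already gives $O_g(1)$, which is what the paper uses --- but a weaker bound there does no harm.

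The genuine gap is the one you explicitly try to wave away: the passage from counting triples to counting isomorphism classes. You write that distinct triples giving isomorphic graphs ``only helps'' and that ``the same crude bound applies to good and bad alike, so the ratio is preserved,'' while simultaneously conceding that the multiplicity might be as large as $4ab\le 4n$. Those two claims are in tension, and the second one kills the argument. If some isomorphism classes arise from $K=\Theta(n)$ triples while others arise from one, then $(\text{bad graphs})/(\text{total graphs})$ can exceed $(\text{bad triples})/(\text{total triples})$ by a factor $\Theta(n)$. Concretely, your estimates give bad triples $\le n^{1/2+o(1)}$ and total triples $\ge n$, hence total graphs $\ge n/K$; with $K=4n$ this gives only total graphs $\ge 1$, and the bad-fraction bound collapses. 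What the paper invokes as Claim~1 --- quoting Corollary~3.7 of Negami --- is exactly the missing ingredient: each 6-regular toroidal graph arises as $T[a\times b,c]$ (with $c\le a$) for at most $6$ triples. A uniformly bounded multiplicity is what makes the triple count a faithful proxy for the graph count. You flagged the arithmetic-geometric step as ``the main obstacle,'' but that part is routine and correct in outline; the real obstruction is the $O(1)$ multiplicity bound, which you neither cite nor establish.
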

\begin{proof}
Fix a positive integer $n$.  We prove an upper bound on the fraction of
6-regular toroidal $n$-vertex graphs that have edge-width less than $g$,
and we show that this fraction tend to 0 as $n$ tends to infinity.
\begin{clm}
Each 6-regular toroidal graph can be written as $T[a\times b, c]$ for at most
6 choices of positive integers $(a,b,c)$, such that $c\le a$.
\end{clm}
Claim~1 follows immediately from Corollary~3.7 of~\cite{negami}.
The idea of that proof is to start from a vertex and follow edges along a
``straight line'' until returning to that vertex.  More formally, to follow a
straight line, the successor of each vertex $v$ is 3 later than its predecessor
in the cyclic order of neighbors of $v$.  The factor 6~in Claim~1 arises from the 6 choices
of neighbors to leave our initial vertex.  (Since each graph $T[a\times b,c]$ is
vertex-transitive, our choice of initial vertex has no effect.)

\begin{clm}
There exists a constant $C_g$ such that, for all positive integers $a$ and $b$,
at most $C_g$ graphs of the form $T[a\times b,c]$ (with $c\le a$) have edge-width less than $g$.
\end{clm}
Fix positive integers $a$ and $b$.  We bound (independently of $a$ and $b$) the
number of integers $c$ with $1\le c\le a$ such that $T[a\times b,c]$ has
edge-width less than $g$.  We make no effort to optimize this bound, only to
show that it is independent of $a$ and $b$.  Suppose that $T[a\times b,c]$ has
a non-contractible cycle $C$ of length less than $g$. 
Let $r$ denote the number of number of times that $C$ wraps around the torus in
the direction of the dimension of length $a$.  Let $s$ denote the number of
edges on $C$ of length 1 in this direction, and let $t_1$ and $t_2$ denote the
numbers of edges on $C$ from column $b$ to column 1 of lengths $c$ and $c+1$,
respectively.  More precisely, let $s$, $t_1$, $t_2$ denote the net numbers of
these edges traversed in the positive direction as we traverse $C$ (for some
arbitrary orientation).  So $s+ct_1+(c+1)t_2 = ra$.  Thus, $c = (ra - s -
t_2)/(t_1+t_2)$.  Since $s$, $t_1$, $t_2$, and $r$ are all bounded in terms of
$g$ (e.g.  $g>s+t_1+t_2>-g$ and $g+1\ge r\ge -(g+1)$), so is the
number of choices for $c$.  This proves the claim.

Let $d(n)$ denote the number of divisors of $n$.  To build a 6-regular toroidal
graph $T[a\times b,c]$ on $n$ vertices, we have $d(n)$ choices for the pair
$(a,b)$.  So the number of 6-regular toroidal graphs with edge-width less than
$g$ is at most $d(n)C_g$, by Claim~2.  The number of 6-regular toroidal graphs
for the pair $(a,b)$ is precisely $a$, since $1\le c\le a$.  So the number for
the pairs $(a,b)$ and $(b,a)$ is $a+b\ge \max(a,b)\ge \sqrt{n}$.  By Claim~1,
this overcounts the total number of 6-regular toroidal graphs on $n$ vertices
by a factor of at most 6.  Thus, the fraction of 6-regular toroidal graphs on
$n$ vertices with edge-width less than $g$ is at most
$d(n)C_g/(((d(n)/2)\sqrt{n})/6)=12C_g/\sqrt{n}$, which tends to 0 as $n$ tends
to infinity.
\end{proof}

This theorem and lemma immediately imply the following corollary.

\begin{maincor}
If $G$ is a 6-regular toroidal graph on $n$ vertices chosen uniformly at
random, then asymptotically almost surely all 5-colorings of $G$ are
5-equivalent.
\end{maincor}

\section{Good Templates}
\label{templates-sec}
Throughout this section we assume that $G$ is a 6-regular toroidal graph with
edge-width at least 7.
A coloring $\vph$ of $G$ is \Emph{nice} if it is 5-equivalent to some
coloring that contains a good 4-template.  In this section we prove that
every 5-coloring $\vph$ of $G$ is nice.  Our proof uses a sequence of lemmas,
with increasingly weaker hypotheses, culminating in the desired result.
We adopt conventions common to all figures in this section.  We
generally number vertices in the order that we determine their colors.  
Sometimes, we do not determine the color of a vertex, but rather determine a
color not used there.  For such vertices, we restart the numbering,
using higher numbers.
To denote that a vertex is not colored green, we lightly shade it green.  

We often argue by symmetry.  For example, we assume that any triple centered at
30 in Figure~\ref{triple-fig} is $\{1,2,3\}$, rather than $\{28,31,32\}$.  It is
possible that the only automorphism of $G$ that fixes vertex 30 is the identity
map.  So we do not quite mean that we can apply the same argument to $G$ under
some automorphism.  But we do mean something close.  If we consider the
6-regular triangulation of the plane, the set of automorphisms that fix a given
vertex $v$ has size 12; it has 6-fold rotational symmetry, together with
reflection through a line containing $v$.  The essential point is that our
arguments do not make use of the global structure of $G$, but only of subgraphs
of $G$ of diameter at most 6.  Each time that we invoke symmetry, we appeal to a
map from one such subgraph to another, typically with many vertices in common.
And we always mean one of these 12 symmetries mentioned above.
To highlight these symmetries, we draw our pictures as subgraphs of the
6-regular triangulation of the plane.  However, a moment's reflection shows that
these are isomorphic to subgraphs of a triangulated toroidal grid 
(as it is drawn in Figure~\ref{tri-toroidal-fig}).

Along similar lines, it is possible that two vertices drawn as distinct in some
figure are in
fact the same vertex, when $G$ has a short non-contractible cycle.  Such a pair
must be drawn at distance at least 7.  Similarly, vertices may be joined by an
edge that is not shown; but such vertices must be drawn at distance at least 6.
In some figures, the subgraph shown may have diameter 7 or more.  However, the
attentive reader will note that whenever we conclude that $\vph$ contains a good
template $T$, the vertices of $T$ are contained in some subgraph $H$ with
diameter at most 4.  Thus, we avoid these potential complications.

\begin{lem}
If a 5-coloring $\vph$ of $G$ contains a triple, then $\vph$ is nice.
\label{triple-lem}
\end{lem}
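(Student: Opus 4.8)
The plan is to start from a 5-coloring $\vph$ that contains a triple, say the independent set $\{v_1,v_2,v_3\}$ with common neighbor $w$, and show that $\vph$ is 5-equivalent to a 5-coloring containing one of the twelve good 4-templates that extend this triple (these are the templates discussed just before Lemma~\ref{triple-lem}, obtained from the two shown in Figure~\ref{4template-ex-fig} by the 12 local symmetries). If $\vph$ already contains such a good 4-template we are done, so the work is to perform a bounded sequence of Kempe swaps that forces this. The key observation is that the twelve templates are built by adding a fourth vertex $u$ to the triple, where $u$ ranges over a small prescribed set of vertices near $\{v_1,v_2,v_3\}$ (at distance at most a few from $w$), and the template is good precisely because of Lemmas~\ref{good-template-lem} and~\ref{loc-help-lem}: the numbered vertices together with the four template vertices form a well-behaved subgraph $H$ (locally connected, with connected complement), and $v_1,v_2$ play the roles required by Lemma~\ref{good-template-lem}. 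So I just need the template to \emph{appear} in some 5-coloring reachable from $\vph$, i.e., I need $u$ to receive the same color as $v_1,v_2,v_3$.

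First I would fix a candidate fourth vertex $u$ and ask whether $\vph(u)$ already equals the common color $\gamma:=\vph(v_1)=\vph(v_2)=\vph(v_3)$; if so, done. Otherwise, since $u$ is not adjacent to any of $v_1,v_2,v_3$, the only obstruction to recoloring $u$ with $\gamma$ by a single Kempe swap is that $u$ lies in the same $\gamma/\vph(u)$ Kempe component as one of the $v_i$. So I would analyze this two-colored component locally: because $G$ has edge-width at least $7$ and we only care about a subgraph of bounded diameter around $w$, a $\gamma/\vph(u)$ path from $u$ to a $v_i$ within that small region can be ruled out, or else rerouted, by choosing a different fourth vertex $u'$ from the list of twelve (the twelve choices of $u$ surround the triple, so no single short bichromatic path can block all of them). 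If $u$ is in a \emph{different} $\gamma/\vph(u)$ component than all the $v_i$, then swapping that component recolors $u$ to $\gamma$ while leaving $v_1,v_2,v_3$ (and the rest of the relevant local picture) with color $\gamma$, producing a 5-coloring containing a good 4-template, and then Lemma~\ref{template-lem} finishes.

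The main obstacle I expect is exactly the case where every one of the twelve candidate fourth vertices is "blocked," i.e., shares a $\gamma$-alternating component with one of $v_1,v_2,v_3$. Here I would argue that this situation itself produces useful structure: a long two-colored path through the neighborhood of $w$ forces many nearby vertices to use only colors $\gamma$ and $\vph(u)$, which (by 6-regularity and the edge-width bound) in turn forces a new triple, or even a good template directly, somewhere along or adjacent to that path — because a $\{\gamma,\delta\}$-colored path in a 6-regular triangulation pins down the colors of its neighbors enough to create an independent set of size three with a common neighbor in a color other than $\gamma,\delta$. Carrying this out is a finite case analysis on the local configuration of the bichromatic component near $w$; the figures referenced (Figure~\ref{triple-fig} and the helper figures) presumably organize these cases, and in each case one exhibits either the desired swap or a new triple closer to the target configuration, with a potential-function or "progress" argument to guarantee termination. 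The edge-width hypothesis is what lets me treat all of this purely locally, invoking Lemma~\ref{loc-help-lem} to certify local connectivity of the relevant $H$ without worrying about short non-contractible cycles.
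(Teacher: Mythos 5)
Your high-level plan—find a fourth vertex $u$ from the twelve candidates that would complete the triple to a good $4$-template, and use Kempe swaps to force $u$ to the triple's color—captures the spirit of the argument, and the easy half of the paper's proof does exactly this: if any of the ``far'' second-neighbors of the triple's center (the hatched vertices 6, 7, 9, 10, 12, 13, 15, 16, 18, 19, 21, 22 in Figure~\ref{triple-fig}) already carries the triple's color, then a good $4$-template appears and we are done. However, the proposal has a genuine gap at exactly the place where the paper does its real work: what to do when none of the twelve candidates works. Your claim that a $\{\gamma,\delta\}$-colored path through the neighborhood of $w$ ``pins down the colors of its neighbors enough to create an independent set of size three with a common neighbor in a color other than $\gamma,\delta$'' is not substantiated and is not true in general in a $6$-regular triangulation; a short bichromatic path leaves considerable freedom on the remaining three colors and does not by itself force a new triple. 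You concede this yourself (``the figures referenced\ldots presumably organize these cases''), so what you have is an outline, not a proof.

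The paper's actual argument is also structured differently from what you imagine, and the difference matters. Rather than trying to recolor a fourth vertex directly, the paper argues by contradiction and color-forcing: after ruling out the ``far'' second-neighbors, it shows by an intricate sequence of local swaps that the ``near'' second-neighbors 8, 14, 20 (which would \emph{not} immediately yield a good template even if colored orange) can also be assumed non-orange, which then forces vertex 4 to be orange. The final step is not ``find a new triple'' but a green/orange Kempe swap applied to the components through 1, 2, 3 (after first arranging, by symmetry, that green avoids certain key vertices), which yields a coloring where 1, 2, 3 are green while 4 remains orange—and then one of 7, 8, 9 becomes green (or can be made green), reducing to an earlier case. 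None of this is captured by your potential-function sketch, and the hardest subcase (ruling out that 8 is orange) involves multiple nested Kempe swaps and color eliminations at vertices 27--32 that have no analogue in your proposal.
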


\noindent
\textit{Proof.}
Assume the lemma is false, and let $\vph$ be a counterexample with a triple
centered at 30, as in Figure~\ref{triple-fig}.

(1-3) By symmetry, assume 1, 2, 3 are orange.  Neither 6 nor 7 is orange; 
otherwise $\{1,2,3,6\}$ or $\{1,2,3,7\}$ is a good template.  By symmetry,
none of the following are orange: 9, 10, 12, 13, 15, 16, 18, 19, 21, 22.

(4) We will show that 8 is not orange; suppose instead that it is.  If 23 or 24
is orange,
then $\vph$ has a good
orange template at $\{1,2,8,w\}$, where $w\in\{23,24\}$.  So neither 23 nor 24
is orange;  similarly, neither 25 nor 26 is orange.  Let $\alpha$ denote the
color of 7.  If neither 28 nor 29 is $\alpha$, then an $\alpha$/orange swap at
7,8 makes 7
orange (and does not change the color anywhere else except 8, and possibly 9).  
This gives a good template at $\{1,2,3,7\}$, which shows that 8 is not orange. 
So it suffices to ensure that neither 28 nor 29 is $\alpha$. 

First suppose that 28 is $\alpha$.  If 27, 29, and 30 do not have distinct
colors, then some color $\beta$ is absent from the closed neighborhood of 28,
so we use an $\alpha$/$\beta$ swap at 28, ensuring that neither 28 nor 29 is
$\alpha$.  Thus, we assume that 27, 29, and 30 have distinct colors.
If 28, 31, 32 have distinct colors, then let $\gamma$ denote the color on 30.
Now an $\alpha$/$\gamma$ swap at 28,30 reduces to the case where 28 (and also
29) is not color
$\alpha$, handled above.  So assume that 28, 31, 32 do not have distinct colors.
Thus, some color is absent from the closed neighborhood of 30.  Now recoloring
30 ensures that 30 has the same color as 27 or 29, which we have handled above.
Thus, we assume that 28 is not $\alpha$.

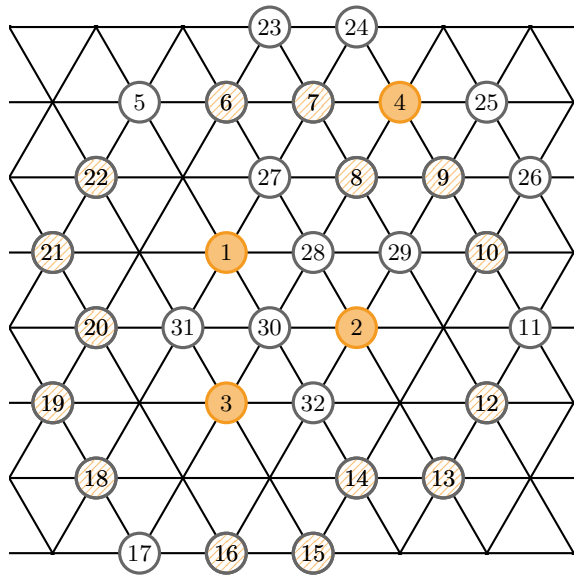
\begin{wrapfigure}{l}{0.55\textwidth}
\centering
\begin{tikzpicture}
\begin{scope}
\clip(-3.46,-3.02) rectangle (4.04,4.02);  
\foreach \ang in {0, 60, 120}  
{
\begin{scope}[rotate=\ang]
\foreach \i in {-6,...,5} 
\draw[thick] (-6.5, \i) -- (6.5,\i);
\end{scope}
}
\end{scope}
\tikzstyle{uStyle}=[shape = circle, minimum size = 15pt, inner sep =
1.5pt, outer sep = 0pt, draw, fill=white]

\begin{scope}[xscale=.577] 
\foreach \i/\j/\lab/\col in {
-1/1/1/orange, 
2/0/2/orange, 
-1/-1/3/orange, 
3/3/4/orange}
\draw[very thick] (\i,\j) node[uStyle, draw=my\col, fill=my\col!60] {\small{\lab}};  

\foreach \i/\j/\lab/\col in {
5/3/25/gray, 6/2/26/gray, 0/4/23/gray, 2/4/24/gray, 
0/2/27/gray, 1/1/28/gray, 3/1/29/gray, 
0/0/30/gray, -2/0/31/gray, 1/-1/32/gray, 
-3/3/5/gray, 6/0/11/gray, -3/-3/17/gray}
\draw[very thick] (\i,\j) node[uStyle, draw=black!60!white, fill=white] {\small{\lab}};

\foreach \i/\j/\lab/\col in {
-1/3/6/orange, 1/3/7/orange, 2/2/8/orange, 4/2/9/orange,
5/1/10/orange, 5/-1/12/orange, 4/-2/13/orange, 2/-2/14/orange,
1/-3/15/orange, -1/-3/16/orange, -4/-2/18/orange, -5/-1/19/orange,
-4/0/20/orange, -5/1/21/orange, -4/2/22/orange}
\draw[very thick] (\i,\j) node[uStyle, draw=black!60!white, fill=white] {\small{\lab}}
(\i,\j) node[uStyle, draw=black!60!white, pattern=north east
lines, pattern color=my\col!60!white] {\small{\lab}};
\end{scope}
\end{tikzpicture}
\caption{The proof of
Lemma~\ref{triple-lem}.\label{triple-fig}}

\end{wrapfigure}

Suppose instead that 29 is $\alpha$.  By symmetry, we can assume that 9 and 27
are both some color $\beta$; otherwise, we reflect across the line through 8 and
28.  Now we will recolor 8 so that it is not orange.
If 28 and 4 have a common color, then some color is unused on the closed
neighborhood of 8, so we can recolor 8.  So assume that 28 and 4 use distinct
colors; let $\gamma$ be the color on 28.
If 30 is $\alpha$ or $\beta$, then some color is unused on the closed
neighborhood of 28, so we can recolor 28 using the color on 4. This allows us
to recolor 8, which ensures that neither 8 nor 4 is orange.  Now we can recolor
7 orange, reducing to a case above.
So assume that 30 is neither $\alpha$ nor $\beta$.
If $\gamma$ does not appear on $\{31,32\}$, then we swap colors
on 28 and 30 and recolor 8 with $\gamma$.
So assume that $\gamma$ appears on $\{31,32\}$.  Now some color is unused on the
closed neighborhood of 30, so we can recolor 30, ensuring that it uses either
$\alpha$ or $\beta$; this reduces to a case above.
Thus, we assume that 29 is not $\alpha$.
All of this shows that we can assume that 8 is not orange.
By symmetry, neither 14 nor 20 is orange.  So 4 is orange; otherwise we 
recolor 8 with orange and reduce to a previous case.

By symmetry, assume that green does not appear on the unique
common neighbor of any of the sets of vertices $\{1,5\}$, $\{2,11\}$, $\{3,17\}$.
We use green/orange swaps at each green/orange component containing
vertices 1, 2, and 3.  In the resulting coloring, 1, 2, and 3 are green but
4 is orange.  So either one of 7, 8, or 9 is green or we can recolor 8 with
green.  For each possibility, we reduce to an earlier case. \hfill$\qed$

\bigskip

Let \Emph{parallel pairs} denote the template, with two colors and two vertices in
each color, induced by vertices $\{1, 2,3,4\}$ in Figure~\ref{parallel-pairs-fig}.

\begin{lem}
If a 5-coloring $\vph$ of $G$ contains parallel pairs, then $\vph$ is nice. 
\label{parallel-pairs-lem}
\end{lem}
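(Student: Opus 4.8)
The plan is to mimic the structure of the proof of Lemma~\ref{triple-lem}: assume $\vph$ is a counterexample containing parallel pairs, and push colors around until we either exhibit a good $4$-template directly or produce a triple (and then invoke Lemma~\ref{triple-lem}, since a coloring containing a triple is nice). So the goal of essentially every branch of the argument will be to reach one of those two outcomes. I would set up coordinates matching Figure~\ref{parallel-pairs-fig}: the parallel pairs template consists of two colors, say $\{1,2\}$ orange and $\{3,4\}$ blue (two colors of size $2$), with $1,2$ and $3,4$ each a "parallel" pair in the $6$-regular triangulation — i.e. second-neighbors along the same direction. The first observation is that parallel pairs is \emph{not} already a good template on its own (otherwise there is nothing to prove), so I need to genuinely enlarge it.

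First I would look at the small neighborhood around the pair $\{1,2\}$. By Lemma~\ref{good-template-lem} together with Lemma~\ref{loc-help-lem}, if any one of a short list of nearby vertices (the common neighbor of $1,2$, the vertices completing the two triangles on $1,2$, etc.) is also orange, then $\{1,2\}$ together with that vertex and one more forms a good $4$-template containing a triple, and we are done. So I can assume none of those vertices is orange. The same reasoning applied to $\{3,4\}$ forbids blue on a parallel list of vertices. This is the bookkeeping core of the proof: list the roughly ten-to-fifteen vertices in a diameter-$4$ well-behaved subgraph $H$ around the two pairs, and use local connectivity plus Lemma~\ref{good-template-lem} to rule out orange (resp.\ blue) at each. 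In each such "forbidden" configuration one should point to the explicit pair of vertices playing the roles of $v_1,v_2$ (and the bonus vertex giving the triple, via the last lemma of Section~\ref{outline-sec}).

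Once orange and blue are excluded from those locations, I would run the Kempe-swap phase. The key move, exactly as in Lemma~\ref{triple-lem}, is: pick a vertex $x$ adjacent to $1$ (or to $2$) that we \emph{want} to make orange; let $\alpha$ be its current color; if the two vertices that would block an $\alpha$/orange swap from reaching $x$ (the other two vertices of $x$'s two triangles toward $1$) are not colored $\alpha$, perform the swap and enlarge the orange template. If they are colored $\alpha$, then some color is missing from a closed neighborhood of one of those blocking vertices, recolor it, and reduce. Iterating this — and symmetrically for blue, and using a green/orange (or green/blue) swap at the very end if the independent pairs can be flooded with a fifth color à la Lemma~\ref{switch-lem} — should in every case either create an extra orange (or blue) vertex adjacent to the pair, yielding a good $4$-template, or force a triple somewhere in $H$. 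The main obstacle, as in the previous lemma, is the case bookkeeping: ensuring the finitely many sub-cases (which of the blocking vertices carry the swap color, which of $H$'s vertices are already saturated with orange/blue, and the boundary cases where a drawn vertex might coincide with another because of a non-contractible cycle — ruled out since $H$ has diameter at most $4$ and the edge-width is at least $7$) are genuinely exhaustive and each terminates in a good template or a triple. I expect no new idea beyond those in Lemmas~\ref{good-template-lem}, \ref{loc-help-lem}, and \ref{triple-lem}; the work is in carefully walking the figure.
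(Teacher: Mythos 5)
Your proposal correctly identifies the overall strategy, which matches the paper's: assume a counterexample $\vph$ containing parallel pairs, and use local good-template arguments (via Lemmas~\ref{good-template-lem} and~\ref{loc-help-lem}) plus carefully chosen Kempe swaps to reduce to a coloring containing a triple, then invoke Lemma~\ref{triple-lem}. You also correctly observe that parallel pairs is not itself a good template and must be enlarged, and that the bookkeeping happens in a diameter-$4$ well-behaved subgraph.

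However, the proposal leaves the entire case analysis as a black box, and there are two genuine gaps in the plan as described. First, the core of the paper's proof is not merely to \emph{forbid} the pair colors from nearby vertices, but to \emph{force} specific colors onto them: it shows that the far vertex $5$ aligned with the green pair $\{1,2\}$ must itself be green, the analogous vertex $6$ must be blue, and then that two further vertices ($8$ and $9$ in Figure~\ref{parallel-pairs-fig}) must be green. These forced identifications drive the whole argument; your phase of "rule out orange (resp.\ blue) at each nearby vertex" is only the easy half and does not yield them. Second, and more importantly, your plan assumes every branch ends by exhibiting a triple or good $4$-template, but the paper's proof does not terminate that way. After pinning down the forced colors it performs a $\beta$/blue swap at $3$ and $4$ (for a suitable fifth color $\beta$), observes that the new coloring still satisfies the parallel-pairs hypothesis, and \emph{re-runs the earlier argument} that forced vertex $6$ to be blue — concluding that $6$ must now be $\beta$, contradicting that $6$ is blue. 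This self-referential contradiction is the key closing idea and there is nothing in your sketch that replaces it; without it there is no reason the "iterate the swap" loop you describe would terminate rather than cycle. So the proposal is a reasonable plan in the right spirit, but it is missing the forced-color step and the final contradiction mechanism, both of which require new observations beyond what Lemma~\ref{triple-lem} supplies.
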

\noindent\textit{Proof.}
Assume the lemma is false, and let $\vph$ be a counterexample.  By
Lemma~\ref{triple-lem}, it suffices to show that $\vph$ is equivalent to a
5-coloring that contains a triple.  We assume 1,2 are green and 3,4 are blue,
as in Figure~\ref{parallel-pairs-fig}.

(5-7) Note that 5 must be green, as follows.  If some other neighbor $v$ of 19
is green, then $\vph$ contains a good template at $\{v,1,2,3,4\}$.
The 4-degeneracy prefix begins 20, 14 and the bonus vertex is 15 (which we make
green). 
If 19 has no green neighbor, then we can recolor 19 green and get a green triple
at $\{1,2,19\}$.  So 5 is green, as claimed.  By the same argument, 6 is blue.
If 10 is green, then $\vph$ contains a good template at $\{1,2,3,4,10\}$,
with prefix 20, 14, 21 and bonus vertex 15.  So 10 is not green; by symmetry,
neither is 11.  Similarly, neither
12 nor 13 is blue.  Suppose that none of 7, 8, 16, and 25 is green or blue.
Now we use green/blue swaps at 1,3 and at 2,4.
Each of these green/blue components that we recolored contains at most 4
vertices; in particular, neither contains 5, so 5 is still green.  If 19 is
blue, then we have a blue triple centered at 14.  If a neighbor $v$ of 19 other
than 5 is blue, then we have a good template $\{v,1,2,3,4\}$, as above (with 15
as bonus vertex).  Otherwise, we can recolor 19 blue to get a blue triple
centered at 14.  Thus, we assume that at least one of 7, 8, 16, and 25 is blue
or green.  By symmetry, assume that 7 is blue.

(8) 
Note that 14 is not blue, since then we have a blue triple at $\{3,4,14\}$.
Similarly, 15 is not green.
Suppose, to reach a contradiction, that 16 is blue.  Let $\alpha$ be a color
that is not green and not blue and that does not appear on 23
or 24.  We use an $\alpha$/green swap at 1 and an $\alpha$/green swap at 2 (possibly
a single swap, if 1 and 2 are in the same $\alpha$/green component).  It is easy
to check that 1 and 2 are the only green vertices that get recolored $\alpha$.
In particular, 5 is still green.  

If a neighbor $v$ of 19 is
$\alpha$, then we have a good template at $\{v,1,2,3,4\}$.  Otherwise, either 19
is $\alpha$ or we can recolor it $\alpha$.  In each case, we get an $\alpha$
triple at $\{1,2,19\}$.  Thus, 16 is not blue.
So 8 must be green.  Otherwise, the blue/green
component containing 2,4 has at most two more vertices: 21 and 22.  After a
blue/green swap at 2,4, we can recolor 21 green.  This gives a good template
at $\{1,2,3,4,5,21\}$, with bonus vertex 15.  Thus, 8 is green.  

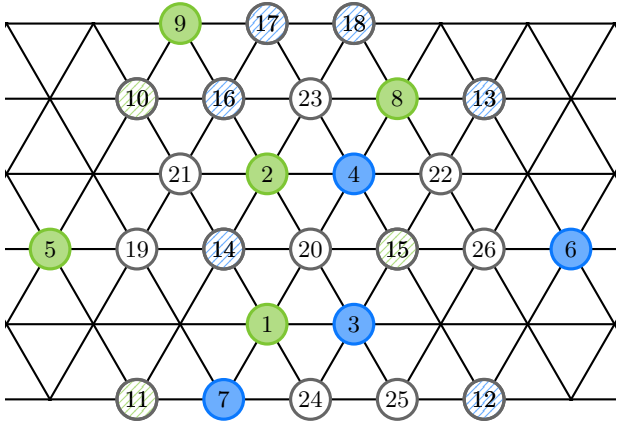
\begin{wrapfigure}{l}{0.50\textwidth}
\centering
\begin{tikzpicture}
\begin{scope}
\clip(-4.05,-2.02) rectangle (4.05,3.02);  
\foreach \ang in {0, 60, 120}  
{
\begin{scope}[rotate=\ang]
\foreach \i in {-5,...,5} 
\draw[thick] (-5.5, \i) -- (6,\i);
\end{scope}
}
\end{scope}
\tikzstyle{uStyle}=[shape = circle, minimum size = 15pt, inner sep =
1.5pt, outer sep = 0pt, draw, fill=white]

\begin{scope}[xscale=.577] 
\foreach \i/\j/\lab/\col in {
-1/-1/1/green, -1/1/2/green, 1/-1/3/blue, 1/1/4/blue,
-6/0/5/green, 6/0/6/blue, -2/-2/7/blue,
2/2/8/green, -3/3/9/green}
\draw[very thick] (\i,\j) node[uStyle, draw=my\col, fill=my\col!60] {\small{\lab}};  

\foreach \i/\j/\lab in {
-4/0/19, 
0/0/20,
-3/1/21, 
3/1/22, 
0/2/23, 
0/-2/24, 
2/-2/25, 
4/0/26} 
\draw[very thick] (\i,\j) node[uStyle, draw=black!60!white, fill=white] {\small{\lab}};

\foreach \i/\j/\lab/\col in {-4/-2/11/green, -4/2/10/green, 
2/0/15/green, 
-2/0/14/blue, 4/-2/12/blue, 4/2/13/blue, -2/2/16/blue, -1/3/17/blue, 1/3/18/blue}
\draw[very thick] (\i,\j) node[uStyle, draw=black!60!white, fill=white] {\small{\lab}}
(\i,\j) node[uStyle, draw=black!60!white, pattern=north east
lines, pattern color=my\col!60!white] {\small{\lab}};

\end{scope}
\end{tikzpicture}
\caption{The proof of
Lemma~\ref{parallel-pairs-lem}.\label{parallel-pairs-fig}}
\end{wrapfigure}

(9) Note that 17 is not green (or blue, as we will see); otherwise, it would
form a green triple with 2 and 8.  Suppose that 9 is not green.  Let $\alpha$
be a color that is not green or blue and is also not used on 23 or 24.  Now we
use a green/$\alpha$ swap at 1 and also use a green/$\alpha$ swap at 2.  It is
easy to check that the only green vertices that get uncolored are 1 and 2.  In
particular, 5 is still green.  If a neighbor $v$ of 19 is colored $\alpha$,
then we have a good template at $\{v,1,2,3,4\}$, with bonus vertex 15.  So
either 19 is $\alpha$ or we can recolor it $\alpha$.  In either case, we get a
triple colored $\alpha$.  Thus, 9 is green.  

If 17 is blue, then we have a good template at $\{1,2, 3, 4, 8,9,17\}$, with
bonus vertex 15;
the degeneracy prefix is 23, 16, 20, 14, 21.  So 17 is
not blue.  Similarly, 18 is not blue; this good template replaces 17 in the
previous one with 18, and the degeneracy prefix is 23, 16, 17, 20, 14, 21.
Let $\beta$ be a color that is not green or blue and is unused on 24 and 25. 
We use $\beta$/blue swaps at 3 and at 4.  It is easy to check that the only
blue vertices that get uncolored are 3 and 4.  This new coloring again
satisfies the hypotheses of the lemma, but 3 and 4 have a common color $\beta$
different from the color on 6.  We repeat our argument above that showed that 6
was blue.  Now it shows that 6 is $\beta$, but in fact 6 is blue.
This contradiction finishes the proof.
\hfill$\qed$%

\begin{lem}
If a 5-coloring $\vph$ of $G$ uses the same color on vertices 1, 2, 3, 4 in
Figure~\ref{helper-fig}, then $\vph$ is nice.
\label{helper-lem}
\end{lem}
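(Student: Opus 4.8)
The plan is to argue by contradiction. Suppose the lemma fails, and let $\vph$ be a counterexample; by symmetry we may assume vertices $1,2,3,4$ are all colored green. By Lemmas~\ref{triple-lem} and~\ref{parallel-pairs-lem}, it suffices to show that $\vph$ is $5$-equivalent to some $5$-coloring that contains a triple (or parallel pairs).

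Recall from Figure~\ref{helper-fig} and the discussion surrounding the bonus-vertex lemma that $T:=\{\{1,2,3,4\},\{6,9\}\}$ is a good template for $G$, witnessed by the well-behaved subgraph $H$ induced by $1,\dots,11$ (which has diameter at most $4$, so its local connectivity follows from Lemma~\ref{loc-help-lem}), and that $12$ is a bonus vertex for $T$, with $C\cup\{12\}$ containing a triple for one of the colors $C$ of $T$. Hence, if $\vph$ contains $T$ --- equivalently, if $6$ and $9$ receive a common color --- then the bonus-vertex lemma shows that $\vph$ is $5$-equivalent to a $5$-coloring containing a triple, and we are done. So it suffices to show that $\vph$ is $5$-equivalent to a $5$-coloring in which $1,2,3,4$ still receive a common color and $6$ and $9$ also receive a common color. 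As a first step I would dispose of the degenerate cases in which some vertex near $\{1,2,3,4\}$ is itself green in a way that completes a good $4$-template (checked via Lemmas~\ref{good-template-lem} and~\ref{loc-help-lem}), so that afterwards the green vertices other than $1,2,3,4$ may be assumed to lie far enough apart that various green/$\alpha$ Kempe components are small.

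The core of the argument is then a case analysis, in the style of the proof of Lemma~\ref{triple-lem}, on the colors of $6$ and $9$ and of their neighbors in Figure~\ref{helper-fig}. The basic moves are: (i) recoloring $6$ (or $9$) with a color absent from its closed neighborhood --- available, for instance, when some triple of its neighbors is monochromatic, or a neighbor of it has at most four distinctly colored neighbors --- so as to make $6$ and $9$ agree; and (ii) a green/$\alpha$ Kempe swap at $1$ and at $2$ (and/or at $3,4$) that recolors only those vertices, reaching a coloring to which an earlier subcase applies. When neither move applies directly, the local coloring is rigidly constrained; there I would either recolor an auxiliary vertex (a neighbor of $6$, of $9$, or one of $1,\dots,4$) first to free a color and then chain back to a previously handled subcase, or exhibit directly a good $4$-template contained in $\vph$, built around $\{1,2,3,4\}$ with a $4$-degeneracy prefix read off from the figure and $12$ as bonus vertex.

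The main obstacle is the bookkeeping inherent in this case analysis: after each Kempe swap one must verify exactly which vertices change color --- in particular that $\{1,2,3,4\}$ stays monochromatic and that any newly created triple is undisturbed --- and track which colors are forced on the secondary vertices near $6$, $9$, and $12$, so that every branch terminates either by producing a triple (or parallel pairs) or by reducing to a branch handled earlier. I expect this to run closely parallel to the proof of Lemma~\ref{triple-lem}, with the four-vertex independent set $\{1,2,3,4\}$ in the role there played by the central triple.
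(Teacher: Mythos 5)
Your proposal is a plan rather than a proof: after correctly identifying that $\{\{1,2,3,4\},\{6,9\}\}$ is a good template with bonus vertex $12$, you gesture at a case analysis ``in the style of Lemma~\ref{triple-lem}'' on the colors of $6$, $9$, and their neighbors, explicitly acknowledging that you have not carried out the bookkeeping. That case analysis is precisely where the content of the lemma would live, so as written there is a genuine gap.

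More importantly, you miss the observation that makes the paper's proof short and entirely \emph{static} --- no Kempe swaps are performed inside this lemma at all. The paper looks at vertex $5$, whose six neighbors are $1,2,6,7,8,9$. Since $1,2$ are orange, $5$ is not orange, and $6,7,8,9$ each see an orange vertex, the four vertices $6,7,8,9$ avoid both orange and $\vph(5)$; by pigeonhole some color, say blue, appears on two of them. Only four non-adjacent pairs are possible. If $\{6,7\}$ is blue we have parallel pairs centered at $5$ with $\{1,2\}$, done by Lemma~\ref{parallel-pairs-lem}. If $\{8,9\}$ or $\{6,9\}$ is blue, $\vph$ already contains the good template $\{1,2,3,4,8,9\}$ or $\{1,2,3,4,6,9\}$ (with prefixes and bonus vertex $12$ read off from Figure~\ref{helper-fig}). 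The only remaining pair is $\{7,9\}$; repeating the identical pigeonhole argument around vertex $10$ (whose orange neighbors are $2,3$) forces blue onto $7$ and $13$, and then $\{7,9,13\}$ is a blue triple, done by Lemma~\ref{triple-lem}. Thus a counterexample cannot exist.

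So your route --- Kempe-swapping until $6$ and $9$ agree so that the bonus-vertex lemma applies --- is in the right spirit but is unnecessary and would be substantially harder to make rigorous than the actual argument; and because you stop short of doing the case analysis, the proposal does not yet constitute a proof. The missing idea is the pigeonhole count on the neighbors of $5$ (and then of $10$), which collapses the problem to four easy static cases.
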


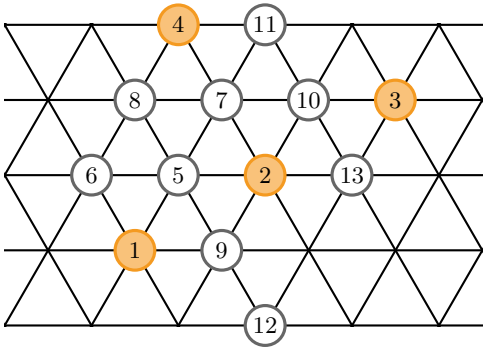
\begin{wrapfigure}{l}{0.5\textwidth}
\centering
\begin{tikzpicture}
\begin{scope}
\clip(-1.73,-1.02) rectangle (4.62,3.02);  
\foreach \ang in {0, 60, 120}  
{
\begin{scope}[rotate=\ang]
\foreach \i in {-5,...,5} 
\draw[thick] (-5.5, \i) -- (5.5,\i);
\end{scope}
}
\end{scope}
\tikzstyle{uStyle}=[shape = circle, minimum size = 15pt, inner sep =
1.5pt, outer sep = 0pt, draw, fill=white]

\begin{scope}[xscale=.577] 
\foreach \i/\j/\lab/\col in {
0/0/1/orange,
3/1/2/orange,
6/2/3/orange,
1/3/4/orange} 
\draw[very thick] (\i,\j) node[uStyle, draw=my\col, fill=my\col!60!white]
{\small{\lab}};  

\foreach \i/\j/\lab/\col in {1/1/5, -1/1/6, 2/2/7, 2/0/9,
0/2/8, 4/2/10, 3/3/11, 3/-1/12, 5/1/13}
\draw[very thick] (\i,\j) node[uStyle, draw=black!60!white, fill=white] {\small{\lab}};
\end{scope}
\end{tikzpicture}
\caption{\label{helper-fig}%
The proof of Lemma~\ref{helper-lem}.}

\end{wrapfigure}
\noindent\textit{Proof.}
By symmetry, assume that 1, 2, 3, 4 are orange.  By Pigeonhole, some color other
than orange is used on at least two neighbors of 5; call this color blue.  If
these neighbors are 6 and 7, then $\vph$ contains parallel pairs centered at 5,
so we are done by Lemma~\ref{parallel-pairs-lem}.  Thus, we assume that blue is
used on vertices 9 and $v$, where $v\in\{6,7,8\}$.  If $v=8$, then 
we have the good template $\{1,2,3,4,8,9\}$; the prefix is 5, 7, 10, 11, and the
bonus vertex is 12.  If $v=6$, then we have the good template $\{1,2,3,4,6,9\}$;
the prefix is 5, 7, 8, 10, 11 and the bonus vertex is 12.  It is easy to check
that the subgraphs induced by $\{1,\ldots,11\}$ and $\{1,\ldots,12\}$ are
well-behaved, since each of them has diameter 4, but every
non-contractible cycle has length at least 7.  So we assume that blue is used on
7 and 9.  Repeating the same argument, with 10 in place of 5, shows that blue is
used on 7 and 13.  But now $\{7,9,13\}$ is a blue triple, so $\vph$ is nice.
\hfill$\qed$%
\bigskip
\bigskip
\bigskip

Let \Emph{crossing pairs} denote the template, with two colors and two vertices in
each color, induced by vertices $\{1, 2,3,4\}$ in
Figure~\ref{crossing-pairs-fig1}.

\begin{lem}
If a 5-coloring $\vph$ of $G$ contains crossing pairs, then $\vph$ is nice.
\label{crossing-pairs-lem}
\end{lem}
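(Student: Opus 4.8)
The plan is to mimic the proofs of Lemmas~\ref{triple-lem}, \ref{parallel-pairs-lem}, and~\ref{helper-lem}. First I would assume the lemma is false and fix a counterexample $\vph$ containing crossing pairs on $\{1,2,3,4\}$, say with $1,2$ green and $3,4$ blue, labeled as in Figure~\ref{crossing-pairs-fig1}. By Lemmas~\ref{triple-lem}, \ref{parallel-pairs-lem}, and~\ref{helper-lem}, it then suffices to show that $\vph$ is $5$-equivalent to a $5$-coloring that contains a triple, or parallel pairs, or the monochromatic configuration of Figure~\ref{helper-fig} (and we are also done outright if $\vph$ already contains a good $4$-template). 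Every time the argument claims that a vertex set is a good $4$-template I would be silently appealing to Lemmas~\ref{good-template-lem} and~\ref{loc-help-lem}: each such template, together with its witnessing subgraph $H$ of diameter at most $4$, lives in a region small enough that local connectedness is automatic, since $G$ has edge-width at least $7$.

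\textbf{Step 1: pin down the neighborhoods of the two pairs.} Let $y$ be a common neighbor of $\{1,2\}$ and $z$ a common neighbor of $\{3,4\}$ in the crossing configuration. Imitating the ``$5$ must be green'' step of Lemma~\ref{parallel-pairs-lem}, I would show that a specified neighbor of $y$ is forced to be green: if some neighbor $v$ of $y$ is green, then either $\{1,2,v\}$ is a green triple or $\{v,1,2,3,4\}$ is a good $4$-template (using a short $4$-degeneracy prefix and a bonus vertex recolored green); and if $y$ has no green neighbor we recolor $y$ green to get the green triple $\{1,2,y\}$. The mirror argument pins down a blue neighbor of $z$. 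Repeating such forced-color deductions --- together with the observation that various nearby vertices cannot be green (resp.\ blue) without producing a good $4$-template or a triple --- would determine the colors on a small neighborhood of the center of the crossing configuration.

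\textbf{Step 2: Kempe swaps to manufacture a triple or parallel pairs.} For the central vertices whose colors are not forced, I would run the swap argument from Lemma~\ref{parallel-pairs-lem}: pick a color $\alpha\notin\{\text{green},\text{blue}\}$ absent from the closed neighborhood of a carefully chosen vertex, perform an $\alpha$/green swap at $1$ and at $2$ (possibly one combined swap), and check that the only green vertices recolored are $1$ and $2$ --- so the forced green anchor from Step~1 is untouched. Then either a neighbor of $y$ becomes $\alpha$, giving the good $4$-template $\{v,1,2,3,4\}$, or $y$ is (or can be recolored) $\alpha$, giving the $\alpha$-triple $\{1,2,y\}$. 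Symmetric $\alpha$/blue swaps at $3,4$ and green/blue swaps at the pairs handle the remaining branches; in the leftover cases the forced structure plus one more swap exhibits parallel pairs or the configuration of Figure~\ref{helper-fig}, reducing to an earlier lemma. Since every branch yields a contradiction, no counterexample exists.

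The hard part will be the Kempe-swap bookkeeping: after each swap I must verify that only the intended vertices change color, so that the structure forced in Step~1 survives and the new coloring still meets the hypotheses (or already contains the desired substructure). When a color I want absent from a closed neighborhood turns out to be present, I must first recolor an auxiliary vertex, and confirming that this auxiliary recoloring is itself a legal Kempe swap that reduces to an already-handled case is precisely the kind of nested case analysis carried out around vertices $27$--$32$ in the proof of Lemma~\ref{triple-lem}. Choosing the right $\alpha$ (and the right vertex witnessing its absence) in each branch, so that the swapped components stay small, is where essentially all of the effort goes.
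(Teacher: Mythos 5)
Your proposal correctly identifies the overall strategy — derive a contradiction by forcing colors around the crossing-pairs center and then using Kempe swaps to produce a triple, parallel pairs, or the configuration of Lemma~\ref{helper-lem} — and this matches the paper's approach in spirit. However, the proposal is a description of \emph{how one would go about} finding the proof, not the proof itself. You explicitly defer essentially all of the content: ``Choosing the right $\alpha$ \ldots in each branch \ldots is where essentially all of the effort goes.'' That effort is the lemma. The paper's proof is a structured case analysis (on the color of vertex 8, then 9, then further subcases 1.1, 1.2, 2.1, 2.2a, 2.2b) spanning several figures, and each branch requires selecting a \emph{specific} swap and a \emph{specific} good template or reduction to an earlier lemma. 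None of those choices appear in your write-up, so there is nothing here to check for correctness beyond the framing, which was already set by the preceding lemmas.

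There is also a small but telling sign that the crossing-pairs geometry has not yet been engaged with. You introduce ``a common neighbor $y$ of $\{1,2\}$'' and ``a common neighbor $z$ of $\{3,4\}$'' as if they were separate, and then propose to imitate the ``5 must be green'' step of Lemma~\ref{parallel-pairs-lem} at $y$. In crossing pairs, the two monochromatic pairs \emph{share} a common neighbor (vertex 5 in Figure~\ref{crossing-pairs-fig1}), and that shared center already sees all four colored vertices; the parallel-pairs argument, which exploits a vertex (19 in Figure~\ref{parallel-pairs-fig}) sitting off to one side of a single pair, does not transfer verbatim. Unlike the parallel-pairs case, the first forced deductions in the paper's proof come from the fact that the five vertices $1,\ldots,5$ plus $6,7$ already nearly exhaust the palette, so vertex $8$ is immediately restricted to two colors; this is a different mechanism from what you describe. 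To turn this sketch into a proof you would need to actually carry out the branching, and the branching structure will look rather different from the one in Lemma~\ref{parallel-pairs-lem}.
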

\noindent
\textit{Proof.} 
We assume the lemma is false, and let $\vph$ be a counterexample.
Further, we assume that 1 is blue, 2 is red, 3 is blue, and 4 is red. 
We may also assume
that 5 is green, 6 is orange, and 7 is purple.  By symmetry between red and
blue, either 8 is green or 8 is blue.

\textbf{Case 1: 8 is green.}
Note that 9 is not red, or we have red/green parallel pairs centered at 7.
Similarly, 11 is not blue. 
Neither 10 nor 12 is green, since $\vph$ has no green triple.
 So 9 and 10 are orange and blue (in some order), and 11 and 12 are red and
purple (in some order).  

\begin{wrapfigure}{l}{0.5\textwidth}
\centering
\begin{tikzpicture}
\begin{scope}
\clip(-1.73,-1.02) rectangle (4.62,3.02);  
\foreach \ang in {0, 60, 120}  
{
\begin{scope}[rotate=\ang]
\foreach \i in {-5,...,5} 
\draw[thick] (-5.5, \i) -- (5.5,\i);
\end{scope}
}
\end{scope}
\tikzstyle{uStyle}=[shape = circle, minimum size = 15pt, inner sep =
1.5pt, outer sep = 0pt, draw, fill=white]

\begin{scope}[xscale=.577] 
\foreach \i/\j/\lab/\col in {
0/0/1/blue, 
1/-1/2/red, 
3/-1/3/blue, 4/0/4/red,
2/0/5/green, 
1/1/6/orange,
3/1/7/purple, 
2/2/8/green, 
4/2/9/orange, 
5/1/10/blue, 
0/2/11/purple,
-1/1/12/red} 
\draw[very thick] (\i,\j) node[uStyle, draw=my\col, fill=my\col!60!white]
{\small{\lab}};  

\foreach \i/\j/\lab/\col in 
{-2/2/13/orange, 
-1/3/14/orange, 
1/3/15/orange,
3/3/16/purple,
5/3/17/purple, 
6/2/18/purple}
\draw[very thick] (\i,\j) node[uStyle, draw=black!60!white, fill=white] {\small{\lab}}
(\i,\j) node[uStyle, draw=black!60!white, pattern=north east
lines, pattern color=my\col!60!white] {\small{\lab}};

\foreach \i/\j/\lab/\col in {7/1/19/gray, 6/0/20/gray}
\draw[very thick] (\i,\j) node[uStyle, draw=black!60!white, fill=white] {\small{\lab}};
\end{scope}
\end{tikzpicture}
\caption{\label{crossing-pairs-fig1}%
Case 1.1: 9 is orange and 10 is blue.} 
\end{wrapfigure}
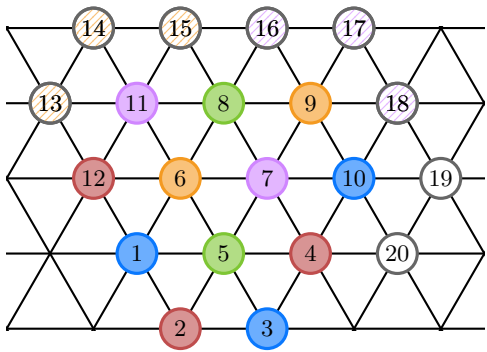

\textbf{Case 1.1: 9 is orange, 10 is blue.}
(By horizontal symmetry, and permuting colors, this also includes the
case that 9 is blue and 10 is orange, but 11 is purple and 12 is red.)
None of 13, 14, 15 is orange; otherwise we have an orange triple centered at 8
(when 15 is orange) or we have a good template.
When 13 is orange, the template is $\{1,2,3,4,6,9,13\}$,
the prefix is 5, 7, 8, 11, 12, and the bonus vertex is the
neighbor of 2 that forms a triple with 1 and 3.  When 14 is orange, the
template is $\{1,2,3,4,6,9,12,14\}$ (with 12 in its own color class) the prefix
is 5, 7, 8, 11; and the bonus vertex\footnote{We must include 12 in the first
template; otherwise, adding the bonus vertex $w$ creates a subgraph that is not
locally connected, since $d_G(11,w)=2$, but $d_{H+w}(11,w)>2$.} is the neighbor
$w$ of 12 and 1 other than 6.  After we color $w$ orange, we are done
by Lemma~\ref{helper-lem}.
If 11 is red and 12 is purple, then a red/orange swap at 11,6 gives a red triple
centered at 5.  So 11 is purple and 12 is red.  By horizontal symmetry, none of
16, 17, 18 is purple.
One of 19 and 20 is purple; otherwise a purple/blue swap at 7,10 gives a blue
triple centered at 5.  Similarly, one of 19 and 20 is orange; otherwise we use a
purple/orange swap at 11,6,7,9 followed by a blue/orange swap at 10,7, which
gives a blue triple centered at 5.  So 19 and 20 are purple and orange (in some
order).  Now either we have purple/orange parallel pairs centered at 10, or we
get them after a purple/orange swap at 11,6,7,9.
This contradicts Lemma~\ref{parallel-pairs-lem}.

\begin{wrapfigure}{l}{0.5\textwidth}
\centering
\begin{tikzpicture}
\begin{scope}
\clip(-1.73,-1.02) rectangle (4.62,3.02);  
\foreach \ang in {0, 60, 120}  
{
\begin{scope}[rotate=\ang]
\foreach \i in {-5,...,5} 
\draw[thick] (-5.5, \i) -- (5.5,\i);
\end{scope}
}
\end{scope}
\tikzstyle{uStyle}=[shape = circle, minimum size = 15pt, inner sep =
1.5pt, outer sep = 0pt, draw, fill=white]

\begin{scope}[xscale=.577] 
\foreach \i/\j/\lab/\col in {
0/0/1/blue, 
1/-1/2/red, 
3/-1/3/blue, 4/0/4/red,
2/0/5/green, 
1/1/6/orange,
3/1/7/purple, 
2/2/8/green, 
4/2/9/blue, 
5/1/10/orange, 
0/2/11/red,
-1/1/12/purple, 
1/3/13/purple,
3/3/14/orange}
\draw[very thick] (\i,\j) node[uStyle, draw=my\col, fill=my\col!60!white]
{\small{\lab}};  



\foreach \i/\j/\lab/\colA/\colB in 
{-2/2/15/blue/orange, 
-1/3/16/blue/orange, 
5/3/17/red/purple, 
6/2/18/red/purple}
{
    \draw[very thick, my\colA, fill=my\colA!60!white] (\i,\j) circle (14pt and 8pt); 
    \fill[my\colB!60!white] (\i,\j) --+ (90:8pt) arc (90:270:14pt and 8pt) -- cycle; 
    \draw[very thick, my\colB] (\i,\j) ++ (90:8pt) arc (90:270:14pt and 8pt); 
    \draw (\i,\j) node[uStyle, draw=none, fill=none] {\small{\lab}};
}

\end{scope}
\end{tikzpicture}
\caption{\label{crossing-pairs-fig2}%
Case 1.2: 9 is blue, 10 is orange, 11 is red, 12 is purple.}
\end{wrapfigure}
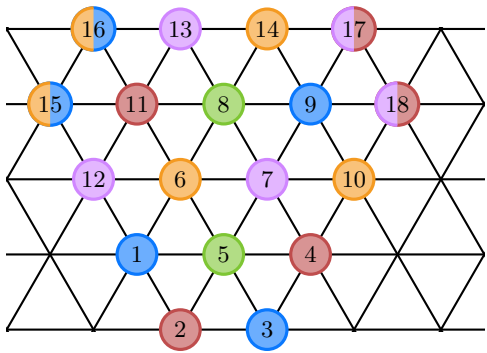

\textbf{Case 1.2: 9 is blue, 10 is orange, 11 is red, 12 is purple.}
One of 13 and 14 is orange; otherwise a green/orange swap at 5,6,8 gives an
orange triple centered at 7.  Similarly, one of 13 and 14 is purple; otherwise,
a purple/green swap at 5,7,8 gives a purple triple centered at 6.
By Lemma~\ref{parallel-pairs-lem}, $\vph$ has no parallel pairs, so 13 is
purple and 14 is orange.  If neither 15 nor
16 is orange, then an orange/red swap at 6,11 gives a red triple centered at 5.
 If neither 15 nor 16 is blue, then we recolor 11 blue, and recolor 6
red, which gives a red triple centered at 6.  So 15 and 16 are orange and blue
(in some order).  By symmetry, 17 and 18 are purple and red (in some order).
Now we use a blue/green swap at 8,9, recolor 11 green, and recolor 6 red; this
gives a red triple centered at 5.

\indent\textbf{Case 2: 8 is blue.}
Now 9 is not red; otherwise, we have a good template
at $\{1,2,3,4,9\}$, with prefix 5,7, and the bonus vertex is the neighbor of 2
that forms a triple with 1 and 3.  So 9 is either orange or green.

\textbf{Case 2.1: 9 is orange.}
(10-12) If 10 is green, then we have green/orange parallel pairs
centered at 7.  So 10 is blue.  None of 15, 16, 17 is orange; otherwise we
have an orange triple centered at 8 or a good template 
as in Case 1. (If $v=15$, then our bonus vertex gives a blue triple. But if
$v=16$, then we use Lemma~\ref{helper-lem}, as in Case~1.)  

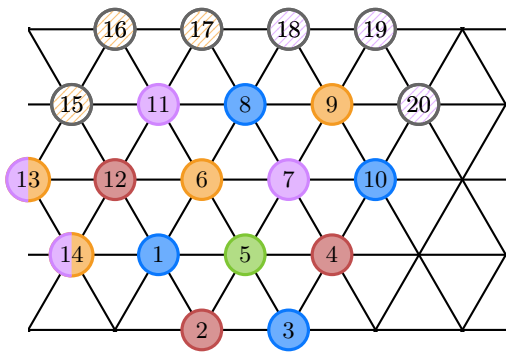
\begin{wrapfigure}{l}{0.5\textwidth}
\centering
\begin{tikzpicture}
\begin{scope}
\clip(-1.73,-1.02) rectangle (4.62,3.02);  
\foreach \ang in {0, 60, 120}  
{
\begin{scope}[rotate=\ang]
\foreach \i in {-5,...,5} 
\draw[thick] (-5.5, \i) -- (5.5,\i);
\end{scope}
}
\end{scope}
\tikzstyle{uStyle}=[shape = circle, minimum size = 15pt, inner sep =
1.5pt, outer sep = 0pt, draw, fill=white]

\begin{scope}[xscale=.577] 
\foreach \i/\j/\lab/\col in {
0/0/1/blue, 
1/-1/2/red, 
3/-1/3/blue, 4/0/4/red,
2/0/5/green, 
1/1/6/orange,
3/1/7/purple, 
2/2/8/blue, 
4/2/9/orange, 
5/1/10/blue, 
0/2/11/purple,
-1/1/12/red} 
\draw[very thick] (\i,\j) node[uStyle, draw=my\col, fill=my\col!60!white]
{\small{\lab}};  

\foreach \i/\j/\lab/\col in 
{-2/2/15/orange, 
-1/3/16/orange, 
1/3/17/orange,
3/3/18/purple,
5/3/19/purple, 
6/2/20/purple}
\draw[very thick] (\i,\j) node[uStyle, draw=black!60!white, fill=white] {\small{\lab}}
(\i,\j) node[uStyle, draw=black!60!white, pattern=north east
lines, pattern color=my\col!60!white] {\small{\lab}};

\foreach \i/\j/\lab/\colA/\colB in {
-3/1/13/orange/purple, -2/0/14/orange/purple}
{
    \draw[very thick, my\colA, fill=my\colA!60!white] (\i,\j) circle (14pt and 8pt); 
    \fill[my\colB!60!white] (\i,\j) --+ (90:8pt) arc (90:270:14pt and 8pt) -- cycle; 
    \draw[very thick, my\colB] (\i,\j) ++ (90:8pt) arc (90:270:14pt and 8pt); 
    \draw (\i,\j) node[uStyle, draw=none, fill=none] {\small{\lab}};
}
\end{scope}
\end{tikzpicture}
\caption{\label{crossing-pairs-fig3}%
Case 2.1: 9 is orange.} 
\end{wrapfigure}

If 11
is red, then a red/orange swap at 11,6 gives a red triple centered at 5.  So
12 is red; otherwise, we recolor 6 red, getting a red triple centered at
5.  And 11 is green or purple.  Assume 11 is purple; otherwise, a green/purple
swap at 5,7 reduces to this case (with green and purple interchanged).  
Similar to 15, 16, 17, none of 18, 19, 20 is purple.  
(13-14) Now 13 or 14 is
orange; otherwise an orange/red swap at 12,6 gives a red triple centered at 5. 
Also, 13 or 14 is purple; otherwise a purple/orange swap at 11,6,7,9, followed
by a red/purple swap at 12,6, again gives a red triple centered at 5.  
So 13 and 14 are orange and purple (in some
order).  But now we either have orange/purple parallel pairs centered at 12, or
else we get them after a purple/orange swap at 11,6,7,9.

\textbf{Case 2.2: 9 is green.}
(11-12) Now 11 or 12 is red; otherwise, we recolor 6 red and get a red triple centered
at  5.  And 11 or 12 is green; otherwise a green/orange swap at 5,6 reduces to
case 2.1, with green and orange interchanged.
Note that 10 is either blue or orange.  

\begin{wrapfigure}{l}{0.5\textwidth}
\centering
\begin{tikzpicture}
\begin{scope}
\clip(-1.73,-1.02) rectangle (4.62,3.02);  
\foreach \ang in {0, 60, 120}  
{
\begin{scope}[rotate=\ang]
\foreach \i in {-5,...,5} 
\draw[thick] (-5.5, \i) -- (5.5,\i);
\end{scope}
}
\end{scope}
\tikzstyle{uStyle}=[shape = circle, minimum size = 15pt, inner sep =
1.5pt, outer sep = 0pt, draw, fill=white]

\begin{scope}[xscale=.577] 
\foreach \i/\j/\lab/\col in {
0/0/1/blue, 
1/-1/2/red, 
3/-1/3/blue, 4/0/4/red,
2/0/5/green, 
1/1/6/orange,
3/1/7/purple, 
2/2/8/blue, 
4/2/9/green, 
6/2/16/red,
7/1/15/orange, 
6/0/14/purple,
5/-1/13/orange,
5/1/10/blue} 
\draw[very thick] (\i,\j) node[uStyle, draw=my\col, fill=my\col!60!white]
{\small{\lab}};  

\foreach \i/\j/\lab/\colA/\colB in {
0/2/11/green/red, 
-1/1/12/green/red,
5/3/17/orange/purple,
3/3/18/orange/purple, 
1/3/19/green/red,
-2/2/20/orange/purple, 
-1/3/21/orange/purple}
{
    \draw[very thick, my\colA, fill=my\colA!60!white] (\i,\j) circle (14pt and 8pt); 
    \fill[my\colB!60!white] (\i,\j) --+ (90:8pt) arc (90:270:14pt and 8pt) -- cycle; 
    \draw[very thick, my\colB] (\i,\j) ++ (90:8pt) arc (90:270:14pt and 8pt); 
    \draw (\i,\j) node[uStyle, draw=none, fill=none] {\small{\lab}};
}

\end{scope}
\end{tikzpicture}
\caption{\label{crossing-pairs-fig4}%
Case 2.2a: 10 is blue.} 
\end{wrapfigure}
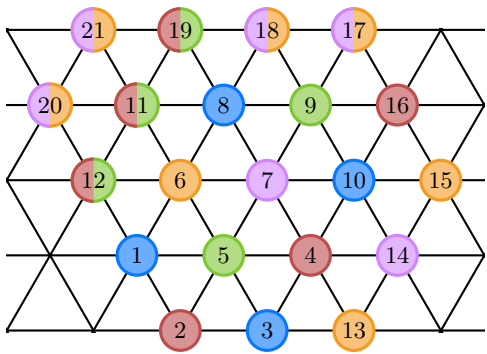

\textbf{Case 2.2a: 10 is blue.}
(13-16) One of 13 and 14 is purple; otherwise we use a purple/red swap at 7,4,
and recolor 6
purple, which gives purple/blue parallel pairs centered at 5.  Similarly, one of
13 and 14 is orange; otherwise, a purple/orange swap at 6,7 reduces to the previous
sentence.  So 13 and 14 are purple and orange in some order.  We assume 13 is
orange and 14 is purple; otherwise a purple/orange swap at 6,7 reduces to this
case.  If 15 is green, then we have green/purple parallel pairs centered at 10.
And if 15 is red, then we have the good template $\{1,2,3,4,8,15\}$; the prefix
is 5, 6, 7, 10 and the bonus vertex is the neighbor of 3 that forms a triple
with 2 and 4.
So 15 is orange. Now 16 is red; otherwise we have a purple triple~centered~at~10.  

(17-19) Since we have no blue triple, 17 is purple or orange.
If 18 is red, then we have red/blue parallel pairs centered at 9.
Thus, 18 is purple or orange.
If 19 is purple or orange, then we either have orange/purple parallel pairs
centered at 8, or we get them after an orange/purple swap at 6,7.
Thus, 19 is red or green.

(20-21) We show that 20 and 21 are both purple or orange.  Suppose 11 is red.
If neither 20 nor 21 is orange, then a red/orange swap at 11,6 gives a red
triple centered at 5.  If neither 20 nor 21 is purple, then a purple/orange
swap at 6,7 followed by a red/purple swap at 11,6 again gives a red triple
centered at 5.  Instead, assume 11 is green.  If neither 20 nor 21 is
orange, then an orange/green swap at 11,6,5 gives orange/purple parallel pairs
centered at 4.  And if neither 20 nor 21 is purple, then we use a purple/orange
swap at 6,7, followed by a green/purple swap at 11,6,5, followed by a
purple/orange swap at 5,7; this again gives orange/purple parallel pairs
centered at 4.  So 20 and 21 are orange and purple.  If 11 is green, then a
green/blue swap at 11,8,9,10 gives a green triple centered at 7.  So assume 11
is red, and use a red/blue swap at 11,8.  If 18 is purple, then a red/orange
swap at 8,6 gives a red triple centered at 5; so assume 18 is orange.  Now a
purple/orange swap at 6,7 and a purple/red swap at 8,6 again gives a
red triple~centered~at~5.

\textbf{Case 2.2b: 10 is orange.}
(13-14) Note that 13 or 18 is purple; otherwise, we use a purple/red swap at 7,4 and
recolor 6 purple, which gives purple/blue parallel pairs centered at 5.  Suppose
that 18 is purple.  Now 13 is not green; otherwise we have green/purple parallel
pairs centered at 4.  So 13 is orange.  Now we reduce to Case 1 with 13, 18, 10,
7, in place of 1, 2, 3, 4 (and orange, purple, red in place of blue, red,
green).  Thus, 13 is purple.  
If 19 is red, then we have the good template
$\{1,2,3,4,8,19\}$; the prefix is 5, 6, 7, 10 and the bonus vertex is the
neighbor of 3 that forms a triple with 2 and 4.  So 19 is not red.
Thus, 14 is red;
otherwise a red/orange swap at 4,10
gives orange/blue parallel pairs centered at 5.  

\begin{wrapfigure}{l}{0.45\textwidth}
\centering
\begin{tikzpicture}[scale=.95]
\begin{scope}
\clip(-1.73,-1.02) rectangle (4.62,3.02);  
\foreach \ang in {0, 60, 120}  
{
\begin{scope}[rotate=\ang]
\foreach \i in {-5,...,5} 
\draw[thick] (-5.5, \i) -- (5.5,\i);
\end{scope}
}
\end{scope}
\tikzstyle{uStyle}=[shape = circle, minimum size = 15pt, inner sep =
1.5pt, outer sep = 0pt, draw, fill=white]

\begin{scope}[xscale=.577] 
\foreach \i/\j/\lab/\col in {
0/0/1/blue, 
1/-1/2/red, 
3/-1/3/blue, 4/0/4/red,
2/0/5/green, 
1/1/6/orange,
3/1/7/purple, 
2/2/8/blue, 
4/2/9/green, 
0/2/11/green,
-1/1/12/red,
5/-1/13/purple,
6/2/14/red,
5/3/15/purple, 
3/3/17/red,
1/3/16/purple,
5/1/10/orange} 
\draw[very thick] (\i,\j) node[uStyle, draw=my\col, fill=my\col!60!white]
{\small{\lab}};  

\foreach \i/\j/\lab/\colA/\colB in {
0/2/11/green/red, 
-1/1/12/green/red,
7/1/19/green/blue, 
6/0/18/green/blue}
{
    \draw[very thick, my\colA, fill=my\colA!60!white] (\i,\j) circle (14pt and 8pt); 
    \fill[my\colB!60!white] (\i,\j) --+ (90:8pt) arc (90:270:14pt and 8pt) -- cycle; 
    \draw[very thick, my\colB] (\i,\j) ++ (90:8pt) arc (90:270:14pt and 8pt); 
    \draw (\i,\j) node[uStyle, draw=none, fill=none] {\small{\lab}};
}

\end{scope}
\end{tikzpicture}
\caption{\label{crossing-pairs-fig5}%
Case 2.2b: 10 is orange.} 
\end{wrapfigure}
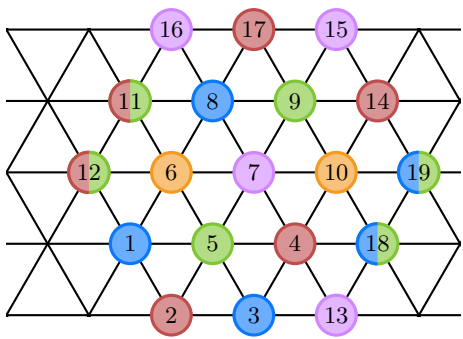

(15-17) If 15 is orange, then we have the good template $\{1,2,3,4,6,10,15\}$, with
prefix 5, 7, 9.  Our bonus vertex is 13, which allows us to finish by
Lemma~\ref{helper-lem}.
Suppose 15 is blue.  If 19 is not purple, then a purple/orange swap at 6,7,10
and a red/orange swap at 4,7 gives red/blue parallel pairs centered at 9.
So assume 19 is purple.  Now 18 is blue; otherwise we recolor 10 blue and get a
blue triple centered at 9.  Now 17 is orange; otherwise an orange/green swap at
10,9 gives parallel orange/green pairs centered at 7.  Finally, a green/purple
swap at 5,7,9 and a red/green swap at 4,7 gives red/blue parallel pairs centered
at 9.  Thus, 15 is not blue.
So 15 is purple, since it sees red and green and is not orange or blue.  Thus,
16 is purple; otherwise a purple/blue swap at 7,8 gives a blue triple centered
at 5.
If 11 is red and 12 is green,
then a green/blue swap at 8,9 gives a green triple centered at 6.  So 11 is
green and 12 is red.
If 17 is orange, then we recolor 8 red and recolor 7 blue, which gives a blue
triple centered at 5.  So 17 is red.

(18-19) Now 18 or 19 is green; otherwise a green/orange swap at 9,10 gives
green/orange parallel pairs centered at 7.  And 18 or 19 is blue; otherwise 
we recolor 10 blue, which gives blue/red parallel pairs
centered at 9.  So 18 and 19 are blue and green (in some order).  Now we use a
purple/orange swap at 6, 7, 10 followed by a red orange swap at 4, 7.  This
gives a red triple centered at 9.
\hfill$\qed$
\smallskip

Let a \Emph{pair} denote the template, with one color and two vertices, induced
by vertices $\{1,3\}$ in Figure~\ref{crossing-pairs-fig5}.

\begin{lem}
Every 5-coloring of $G$ is equivalent to a 5-coloring with a pair.
\label{get-a-pair-lem}
\end{lem}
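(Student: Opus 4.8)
The plan is to argue by contradiction. Suppose $\vph$ is a $5$-coloring of $G$ for which no $5$-coloring $5$-equivalent to $\vph$ contains parallel pairs or crossing pairs; since the desired conclusion concerns the whole $5$-equivalence class of $\vph$, I am free to replace $\vph$ by any coloring obtained from it by Kempe swaps. I will work entirely inside a subgraph of bounded diameter, so that (by edge-width at least $7$ and Lemma~\ref{loc-help-lem}) I may treat the neighbourhood of any vertex as part of the $6$-regular triangulation of the plane, argue using the $12$ local symmetries, and be sure that any Kempe swap I perform that starts inside this subgraph and uses only colours absent from a fixed set of already-coloured vertices does not recolour those vertices.

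Call two neighbours of a vertex $v$ that lie at distance $2$ around the hexagon $N(v)$ and receive a common colour a \emph{diagonal} at $v$, with \emph{midpoint} the neighbour of $v$ between them. The elementary point is that two diagonals at $v$ of \emph{distinct} colours always have distinct, non-consecutive midpoints, so those midpoints are either antipodal in $N(v)$ — giving parallel pairs — or adjacent — giving crossing pairs. Hence it suffices to produce, after Kempe swaps, a vertex carrying two diagonals of distinct colours. Note also that a triple $\{x_1,x_2,x_3\}$ at $v$ consists of three diagonals of a single colour $c$, while the three midpoints $y_1,y_2,y_3$ of $v$ all avoid $c$; if two of the $y_i$ agree we already have a second diagonal of a colour $\neq c$, and otherwise one of a small number of single Kempe swaps — recolouring some $y_i$ to the colour of another $y_j$, creating a new diagonal or a new triple — does the job, so a triple at $v$ is as good as a pair.

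Now fix any vertex $v$ with neighbours $w_1,\dots,w_6$ in cyclic order; since $G$ is $6$-regular and $\vph$ uses only five colours, two of the $w_i$ share a colour. The main dichotomy is on their distance in $N(v)$. If some $w_i=w_{i+2}$, then $\vph$ already has a diagonal of some colour $c$ at $v$, and I would determine the colours of the remaining neighbours (and of the relevant part of the second neighbourhood) one vertex at a time, in the style of Lemmas~\ref{triple-lem} and~\ref{crossing-pairs-lem}: at each step, either some colour choice creates a second diagonal of a colour $\neq c$ at $v$ (done), or creates a triple (handled above), or is forced, and once the local picture is pinned down a specific Kempe swap — typically recolouring a midpoint of the diagonal to the colour of another neighbour of $v$ — produces the second diagonal. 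If instead, for \emph{every} vertex $v$, the only coincidences among $w_1,\dots,w_6$ occur at antipodal positions $w_i=w_{i+3}$, then each of the alternating triples $\{w_1,w_3,w_5\}$ and $\{w_2,w_4,w_6\}$ is rainbow at every vertex; this ``alternating-rainbow'' condition is very rigid, and I would reach a contradiction by propagating the colour constraints across a few adjacent hexagons until a Kempe swap becomes available that destroys the rainbow structure somewhere, reducing to the previous case (or producing a pair outright).

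The step I expect to be the main obstacle is this last, rigid case: with essentially no local freedom one must chase the colour pattern through several neighbouring hexagons before any Kempe swap can be applied, so the case analysis there is the most delicate part of the lemma (comparable to Lemma~\ref{crossing-pairs-lem}). A secondary, pervasive obstacle is keeping control of the Kempe swaps used throughout, so that none of them propagates out of the bounded-diameter subgraph and back so as to recolour a vertex whose colour has already been fixed; this is handled uniformly by the edge-width hypothesis and Lemma~\ref{loc-help-lem}, together with the fact that every configuration we ultimately exhibit lies inside a subgraph of diameter at most $4$.
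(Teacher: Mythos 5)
Your proposal misidentifies the target and leaves the essential work undone.

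First, the target. The paper's notion of a ``pair'' is exactly what you call a ``diagonal'': two vertices of the same colour lying at cyclic distance~$2$ in the neighbourhood of some vertex (equivalently, a monochromatic non-adjacent pair with two common neighbours). The lemma therefore only asks you to produce, after Kempe swaps, a \emph{single} diagonal. Your proof instead sets out to produce \emph{two} diagonals of distinct colours at the same vertex, i.e.\ parallel or crossing pairs. That is strictly stronger and is not what this lemma claims --- it is the combined content of this lemma together with Lemma~\ref{pair-lem}. Correspondingly, the contradiction hypothesis you set up (``no $5$-coloring $5$-equivalent to $\vph$ contains parallel pairs or crossing pairs'') is not the negation of the lemma's conclusion; a genuine counterexample to the lemma satisfies the stronger hypothesis ``no equivalent coloring contains a pair at all,'' and this is what the paper assumes. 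As a symptom of the same confusion, your ``case (a)'' --- a vertex already carries one diagonal, try to force a second --- is vacuous for this lemma: one diagonal \emph{is} a pair, so you are already done the moment you enter that case.

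Second, and more seriously, the case that actually matters --- your ``alternating-rainbow'' case, where no vertex has any diagonal, so the only monochromatic coincidences in each neighbourhood are antipodal --- is exactly the heart of the lemma, and you dispose of it with ``I would reach a contradiction by propagating the colour constraints across a few adjacent hexagons until a Kempe swap becomes available.'' That sentence is the entire content of the result, and it is not an argument. The paper's proof of this case is a two-stage, fully explicit case analysis: it first shows (working out the colours of about fourteen vertices around a fixed vertex and applying a single well-chosen swap) that a counterexample must contain an ``alternating sets'' configuration, and then, assuming alternating sets, it pins down the colours of roughly twenty further vertices before exhibiting the swap that creates a pair. None of that concrete structure appears in your write-up. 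You also wave at ``a triple at $v$ is as good as a pair,'' but a triple trivially contains a pair, so the sentence is either vacuous for the stated lemma or, if you mean ``as good as parallel/crossing pairs,'' another unproved claim. In short: the broad strategy (pigeonhole in a neighbourhood, exploit the rigidity of the no-pair configuration, work inside a small-diameter subgraph using edge-width~$\ge 7$) is sound and matches the paper, but the proposal does not supply the case analysis that constitutes the proof, and it aims at the wrong statement.
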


\noindent
\textit{Proof.}~Six vertices in a coloring form \Emph{alternating sets} if they are
colored as vertices
1,2,3,4,5,6\ in Figure~\ref{get-a-pair-fig2} (not Figure~\ref{get-a-pair-fig1}),
ignoring all other colors in that picture.
We first show that each 5-coloring is equivalent to a 5-coloring that 
contains either a pair or alternating sets.  Assume instead that $\vph$ is
a counterexample to this statement.  

\begin{wrapfigure}{l}{0.45\textwidth}
\centering
\begin{tikzpicture}[scale=.95]
\begin{scope}
\clip(-2.88,-0.02) rectangle (2.88,3.02);  
\foreach \ang in {0, 60, 120}  
{
\begin{scope}[rotate=\ang]
\foreach \i in {-5,...,5} 
\draw[thick] (-5.5, \i) -- (5.5,\i);
\end{scope}
}
\end{scope}
\tikzstyle{uStyle}=[shape = circle, minimum size = 15pt, inner sep =
1.5pt, outer sep = 0pt, draw, fill=white]

\begin{scope}[xscale=.577] 
\foreach \i/\j/\lab/\col in {
0/0/1/red, 0/2/4/orange, 2/0/3/orange, 2/2/2/red,
4/0/5/red,
-2/2/6/red, 1/1/7/green, 3/1/8/blue, -2/0/9/blue,
-1/1/10/purple, 4/2/11/purple, 1/3/12/blue, -1/3/13/green, -3/1/14/green} 
\draw[very thick] (\i,\j) node[uStyle, draw=my\col, fill=my\col!60!white]
{\small{\lab}};  
\end{scope}
\end{tikzpicture}
\caption{\label{get-a-pair-fig1}%
A claim in the proof of Lemma~\ref{get-a-pair-lem}.}
\end{wrapfigure}
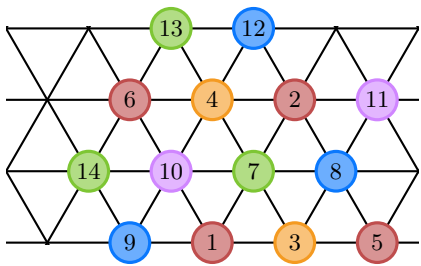

(1-6) Since we have no pair, we also have no
triple.  By Pigeonhole, two colors appear exactly twice on the neighborhood of
7.  Since we have no pair, these repeated colors appear ``across'' from
each other; that is, each color appears on two vertices with 7 as their
unique common neighbor.  By rotational symmetry, we assume 1 and 2
are red, and 3 and 4 are orange, as in Figure~\ref{get-a-pair-fig1}.
If no neighbor of 3 is red, and no neighbor of 1 is orange, then an orange/red
swap at 1,3 gives orange on both 1 and 4, a pair.  Thus, 
either 5 is red or 9 is orange; by symmetry, we assume that 5 is red.  By
repeating the same argument for 2 and 4, we assume that 6 is red; if instead 11
is orange, then we have alternating sets, as claimed.

(7-14) Now 7 and 8 are new colors, green and blue, respectively.  If 9 is
orange, then we have alternating
sets; and if 9 is green, then we have a green pair, 9 and 7.  If 9 is purple, then
we recolor 7 with purple and get a purple pair.  So 9 is blue.  This implies that
10  is purple.  Clearly, 11 is not red or blue.  It is also not green, since
$\vph$ has no pair, and it is not orange, since $\vph$ does not have alternating
sets.  So 11 is purple.
If 12 is green or purple, then $\vph$ contains a pair; so 12 is blue.  Now 13
and 14 are both green; for each vertex, three colors appear on its neighbors and a
fourth is forbidden, since $\vph$ contains no pair.  Now we use a purple/orange
swap at 10,4.  This gives an orange pair at 10 and 3.  This proves the claim
that $\vph$ is equivalent to a 5-coloring that contains either a pair or
alternating sets.

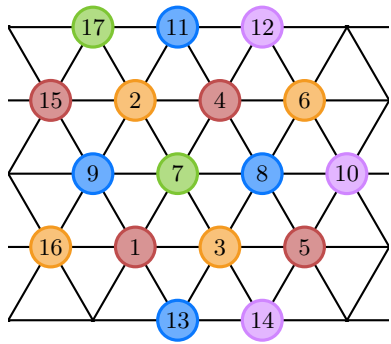
\begin{wrapfigure}{l}{0.45\textwidth}
\centering
\begin{tikzpicture}[scale=.975]
\begin{scope}
\clip(-2.31,-2.02) rectangle (2.88,2.02);  
\foreach \ang in {0, 60, 120}  
{
\begin{scope}[rotate=\ang]
\foreach \i in {-5,...,5} 
\draw[thick] (-5.5, \i) -- (5.5,\i);
\end{scope}
}
\end{scope}
\tikzstyle{uStyle}=[shape = circle, minimum size = 15pt, inner sep =
1.5pt, outer sep = 0pt, draw, fill=white]

\begin{scope}[xscale=.577] 
\foreach \i/\j/\lab/\col in {
-1/-1/1/red, -1/1/2/orange, 1/-1/3/orange, 1/1/4/red,
3/-1/5/red, 3/1/6/orange, 0/0/7/green, 2/0/8/blue,
-2/0/9/blue, 4/0/10/purple, 0/2/11/blue, 2/2/12/purple,
0/-2/13/blue, 2/-2/14/purple, -3/1/15/red, -3/-1/16/orange,
-2/2/17/green}
\draw[very thick] (\i,\j) node[uStyle, draw=my\col, fill=my\col!60] {\small{\lab}};  
\end{scope}
\end{tikzpicture}
\caption{Finishing the proof of Lemma~\ref{get-a-pair-lem}.\label{get-a-pair-fig2}}
\end{wrapfigure}

(1-10) Now we assume that $\vph$ contains alternating sets, with 1, 4, 5 red and
with 2, 3, 6 orange, as in  Figure~\ref{get-a-pair-fig2}.
We also assume 7 is green and 8 is blue. 
(a) First suppose that 9 and 10 are both purple.  At least one of 11 and 12 is blue
or green.  So either $\vph$ contains as a pair one of $\{7,11\}$ or $\{8,12\}$ or the
resulting coloring contains one of these pairs after a green/blue swap at 7,8.
So assume that 9 and 10 are not both purple. (b) Next suppose that 9 is blue and
10 is green.  If none of 11, 12, 13, 14 is purple, then we recolor both 3 and 4
purple, which gives a pair.  If instead one of 11, 12, 13, 14 is purple,
then recoloring 7 or 8 purple gives a purple pair.  Thus, we assume that 
exactly one of 9 and 10 is purple.  (c) By reflectional symmetry, we assume
that 9 is blue and 10 is purple.

(11-18) Since 7 is green, 11 is not green.  Similarly, 11 is not purple, since
then we can recolor 7 purple.  So 11 is blue.  If 12 is green, then we can recolor 7
purple and recolor 8 green.  So 12 is purple.  By reflectional symmetry, 13 is
blue and also 14 is purple.  Note that 15 is not green, since 7 is green.  And
15 is not purple, since then we could recolor 7 purple to get a purple pair.  So
15 is red.  Similarly, 16 is orange.  Next, 17 is green; otherwise, an
orange/green swap at 2,7,3 gives the orange pair $\{6,7\}$.  
Now recoloring 2 purple gives a purple pair at $\{2,12\}$.
\hfill $\qed$

\begin{lem}
If $\vph$ is a 5-coloring of $G$ with a pair, then $\vph$ is equivalent to a
coloring that contains either a triple, or parallel pairs, or crossing pairs.
In particular, $\vph$ is nice.
\label{pair-lem}
\end{lem}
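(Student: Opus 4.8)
\noindent\textit{Proof plan.}\enspace
I would mimic the proofs of Lemmas~\ref{triple-lem}, \ref{parallel-pairs-lem}, and~\ref{crossing-pairs-lem}: assume this lemma fails, fix a counterexample~$\vph$ containing a pair, and show that $\vph$ is $5$-equivalent to a coloring that contains a triple, parallel pairs, or crossing pairs. The closing assertion that $\vph$ is nice then follows at once from those three lemmas. By symmetry I may take the pair to be two orange vertices $1$ and~$2$ at distance~$2$ whose two common neighbors $3$ and~$4$ are adjacent, so that $\{1,3,4\}$ and $\{2,3,4\}$ induce triangles sharing the edge joining $3$ and~$4$. I would draw this, together with everything within distance~$2$ of $\{3,4\}$, as a subgraph of the $6$-regular planar triangulation; this is legitimate because edge-width at least~$7$ keeps all these vertices distinct and forbids any unseen edge among them.

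First I would rule out a triple. Among the six neighbors of~$3$, the two (other than~$4$) adjacent to~$1$ or to~$2$ cannot be orange, and $4$ itself is adjacent to both~$1$ and~$2$; hence the only neighbor of~$3$ that could be both orange and non-adjacent to $1$ and~$2$ is the one opposite~$4$, and if that vertex is orange it forms a triple centered at~$3$ with $1$ and~$2$. The same holds around~$4$. So I may assume those two vertices are not orange, and---invoking the familiar move of recoloring a low-impact vertex (such as $3$, $4$, or a near neighbor) whenever three colors are missing from its closed neighborhood, exactly as in the proof of Lemma~\ref{triple-lem}---that no short Kempe swap makes one of them orange.

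Next I would produce a second monochromatic pair. The four neighbors of~$3$ other than $1$ and~$2$ (one being~$4$) are all non-orange, while only three non-orange colors occur on neighbors of~$3$, so two of these four share a color. A repeat on two consecutive such neighbors is impossible by properness; a repeat in one of the three ``bent'' positions yields a pair in a new color that, together with $\{1,2\}$, has $3$ adjacent to all four vertices involved, and comparing its position with that of $\{1,2\}$ shows that two of the bent positions give crossing pairs and the third gives parallel pairs---closing those cases by Lemma~\ref{crossing-pairs-lem} or~\ref{parallel-pairs-lem}. The one remaining possibility is a repeat on two opposite neighbors of~$3$; there I would break the repeat by recoloring one of those two vertices and reapply the count, or pass to the neighborhood of~$4$, and in the last stubborn configurations apply a Kempe swap supported on a $2$-colored component of size at most four (its size bounded using edge-width at least~$7$) to rotate the picture into parallel or crossing pairs, or to expose a triple. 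A few residual cases I expect to close directly through the good-template tools (Lemmas~\ref{good-template-lem} and~\ref{loc-help-lem}) together with a bonus vertex, which is harmless since only the ``in particular'' conclusion is needed.

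The step I expect to be the main obstacle is this last one: organizing every placement of ``two monochromatic pairs,'' and every near-miss, so that each is parallel pairs, crossing pairs, reduces to one of those by a controlled Kempe swap, or exposes a triple---all while keeping each Kempe component used small enough that it disturbs neither $\{1,2\}$ nor the configuration being assembled. As throughout Section~\ref{templates-sec}, the recurring engine is recoloring a vertex whose closed neighborhood omits a color, and the recurring safety check is that edge-width at least~$7$ lets us treat the relevant radius-$2$ neighborhoods as planar.
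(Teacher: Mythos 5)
Your setup and opening reductions are sound and essentially mirror the paper's: you take a bent pair $\{1,2\}$ with adjacent common neighbors $3,4$, rule out an orange triple at the ``opposite'' positions (the paper's vertices $5$ and $6$), and then argue by pigeonhole around vertex~$3$ that two of its remaining neighbors share a color, producing a second monochromatic pair. Your observation that the three ``bent'' placements of this second pair immediately give crossing pairs (two of them) or parallel pairs (one of them) is correct and is the same forcing the paper does around vertex~$4$ when it shows ``$5$ is also purple; similarly $6$ is green.'' So far this is a valid restatement of the paper's opening paragraph.

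The genuine gap is what comes next. Once the second pair sits in the ``opposite'' position, you are in the forced configuration (in the paper's notation: $1,2$ red; $3,5$ purple; $4,6$ green; $7,8$ orange and blue), and this is where the real work lives. The paper's proof at this point splits into four cases according to the color of vertex~$9$ (a neighbor of $1$ just outside your picture), and each case runs through roughly a dozen further colored vertices, numerous Kempe swaps on carefully identified small $2$-colored components, reductions to earlier cases or to Lemmas~\ref{triple-lem}, \ref{parallel-pairs-lem}, \ref{crossing-pairs-lem}, and several applications of good templates with designated bonus vertices; this occupies roughly a page and five figures and is the overwhelming bulk of the proof. Your proposal compresses all of this to ``recolor one of those two vertices and reapply the count, or pass to the neighborhood of $4$, \ldots apply a Kempe swap supported on a $2$-colored component of size at most four \ldots a few residual cases I expect to close directly.'' You are right to flag this step as the main obstacle, but flagging it is not the same as resolving it: there is no indication of which swaps to perform, why they are well defined (why the relevant components are small), which templates arise, or how the case split terminates. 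As written, the plan asserts the conclusion of the hardest part rather than proving it, so the proposal does not constitute a proof of the lemma.
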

\textit{Proof.}
The second statement follows immediately from the first, combined with
Lemmas~\ref{triple-lem}, \ref{parallel-pairs-lem}, and~\ref{crossing-pairs-lem}.
So we now prove the first; 
instead assume that $\vph$ is a counterexample.

(1-8) By assumption, 1 and 2 have a common color, red; see
Figure~\ref{case1pair}.
Now 3 and 4 have new colors: purple and green, respectively.  Further, 5 is also
purple. Suppose instead that 5 is orange; now each of 7 and 8 is either
purple or blue.  If either is purple, then we have red/purple crossing pairs
centered at 4.  Otherwise, both 7 and 8 are blue, so we have red/blue parallel
pairs centered at 4.  Thus, we assume that 5 is purple; similarly, we assume that 6
is green.  Below we frequently use this argument implicitly.
Finally, 7 and 8 have new colors; if they are not distinct, then we have
parallel pairs centered at 4.  So assume 7 is orange and 8 is blue.
Now we consider 4 cases, depending on the color of 9.

\begin{wrapfigure}{L}{0.45\textwidth}
\centering
\begin{tikzpicture}[scale=.885]
\begin{scope}
\clip(-2.88,-2.02) rectangle (2.88,2.02);  
\foreach \ang in {0, 60, 120}  
{
\begin{scope}[rotate=\ang]
\foreach \i in {-5,...,5} 
\draw[thick] (-5.5, \i) -- (5.5,\i);
\end{scope}
}
\end{scope}
\tikzstyle{uStyle}=[shape = circle, minimum size = 15pt, inner sep =
1.5pt, outer sep = 0pt, draw, fill=white]

\begin{scope}[xscale=.577] 
\foreach \i/\j/\lab/\col in {
0/0/1/red, 0/-2/2/red, -1/-1/3/purple, 1/-1/4/green,
3/-1/5/purple, -3/-1/6/green, 2/0/7/orange, 2/-2/8/blue,
-1/1/9/orange, 2/2/10/red, 1/1/11/purple, -2/0/12/blue,
-2/-2/13/orange, -2/2/16/red, -3/1/17/purple, -4/0/18/red,
0/2/19/blue} 
\draw[very thick] (\i,\j) node[uStyle, draw=my\col, fill=my\col!60!white]
{\small{\lab}};  

\foreach \i/\j/\lab/\colA/\colB in {3/1/14/blue/green, 4/0/15/blue/green}
{
    \draw[very thick, my\colA, fill=my\colA!60!white] (\i,\j) circle (14pt and 8pt); 
    \fill[my\colB!60!white] (\i,\j) --+ (90:8pt) arc (90:270:14pt and 8pt) -- cycle; 
    \draw[very thick, my\colB] (\i,\j) ++ (90:8pt) arc (90:270:14pt and 8pt); 
    \draw (\i,\j) node[uStyle, draw=none, fill=none] {\small{\lab}};
}

\end{scope}
\end{tikzpicture}
\caption{Case 1: 9 is orange. 
\label{case1pair}}
\end{wrapfigure}
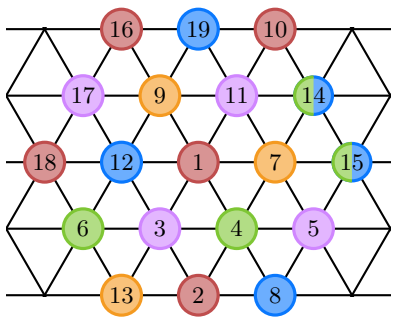

\textbf{Case 1: 9 is orange.}
(10-13) Since 7 and 9 are both orange, 10 is red, like 1, and 11 is purple, like 3.
Now 12 is blue.  And 13 is orange; otherwise, a purple/orange swap at 3 gives an
orange triple centered at 1.

(14-16) One of 14 and 15 is blue; otherwise a blue/orange swap at 7 gives
blue/red parallel pairs centered at 4.  And one of of 14 and 15 is green;
otherwise, an orange/green
swap at 7,4 gives orange/red crossing pairs centered at 3.  
Note that 17 is not red, since 1 is red and 6 and 9 have distinct colors.
So 16 is red; otherwise a red/orange swap at 9,1,7 gives a red triple centered at 11.

(17-19) Note that 18 cannot be purple, since 3 is purple, but 6 and 9 have
distinct colors.
So 17 is purple; otherwise, a blue/purple swap at 12,3 gives blue/red crossing
pairs centered at 4.  Further, 18 cannot be orange, since then 
a red/blue swap at 12,1 gives red/orange crossing pairs centered at 17.
So 18 is red.
If 19 is green, then an orange/blue swap at 9,12 gives orange/red parallel pairs
centered at 3.  So 19 is blue. But now we recolor 9 with green, and recolor
12 with orange.  This also gives orange/red parallel pairs centered at 3.

\begin{wrapfigure}{L}{0.45\textwidth}
\centering
\begin{tikzpicture}[scale=.885]
\begin{scope}
\clip(-2.88,-3.015) rectangle (2.88,2.015);
\foreach \ang in {0, 60, 120}
{
\begin{scope}[rotate=\ang]
\foreach \i in {-5,...,5}
\draw[thick] (-5.5, \i) -- (5.5,\i);
\end{scope}
}
\end{scope}
\tikzstyle{uStyle}=[shape = circle, minimum size = 15pt, inner sep =
1.5pt, outer sep = 0pt, draw, fill=white]

\begin{scope}[xscale=.577] 
\foreach \i/\j/\lab/\col in {
0/0/1/red, 0/-2/2/red, -1/-1/3/purple, 1/-1/4/green,
3/-1/5/purple, -3/-1/6/green, 2/0/7/orange, 2/-2/8/blue,
-1/1/9/purple, 1/1/10/blue, -2/0/11/orange, -4/0/12/red,
-3/1/13/blue, -2/-2/14/blue, 3/1/15/green, 2/2/16/red,
-2/2/17/red, 0/2/18/orange} 
\draw[very thick] (\i,\j) node[uStyle, draw=my\col, fill=my\col!60!white] {\small{\lab}};
\foreach \i/\j/\lab/\col in 
{4/0/21/white} 
\draw[very thick] (\i,\j) node[uStyle, draw=black!60!white, fill=white] {\small{\lab}};

\foreach \i/\j/\lab/\colA/\colB in {-1/-3/19/orange/green, 1/-3/20/orange/green}
{
    \draw[very thick, my\colA, fill=my\colA!60!white] (\i,\j) circle (14pt and 8pt); 
    \fill[my\colB!60!white] (\i,\j) --+ (90:8pt) arc (90:270:14pt and 8pt) -- cycle; 
    \draw[very thick, my\colB] (\i,\j) ++ (90:8pt) arc (90:270:14pt and 8pt); 
    \draw (\i,\j) node[uStyle, draw=none, fill=none] {\small{\lab}};
}

\end{scope}
\end{tikzpicture}
\caption{Case 2: 9 is purple.\label{case2pair}}
\end{wrapfigure}
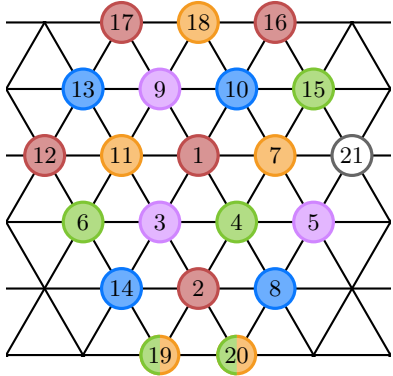

\textbf{Case 2: 9 is purple.}
(10-14) If 10 is green, then we have green/purple parallel pairs centered at 1,
so 10 is blue.
Since 3 and 9 are purple, 11 is orange and 12 is red.  If 13 is green, then we
have green/purple parallel pairs centered at 11; so 13 is blue.  If 14 is orange,
then we have orange/red parallel pairs centered at 3; so~14~is~blue.  

(15-16) Note that 21 is not green, since 4 is green but 5 and 10 are distinct
colors.  Now 15 is green; otherwise, a green/orange swap at 4,7 gives
orange/purple crossing pairs centered at 1.  Note that 18 is not red, since 1
is red but 9 and 15 are distinct colors.  So 16 is red; otherwise a red/blue
swap at 1,10 gives blue/purple crossing pairs centered at 11.

(17) Suppose 17 is not red.  If neither 19 nor 20 is purple, then a 
red/purple swap at 2,3,1,9  gives a red triple centered at 11.  If neither 19 nor
20 is green, then a green/red swap at 2,4,1 gives a green triple centered at 3.
So 19 and 20 are purple and green.  Now we recolor 2 with orange and do a
green/red swap at 4,1, which gives green/purple crossing pairs centered at 11. 
So 
17 is red.

(18-20) Now 18 is orange; otherwise an orange/purple swap at 9,11,3 gives an orange
triple centered at 1.  If neither 19 nor 20 is green, then a green/red swap at
2,4,1 gives a green triple centered at 3.  If neither 19 nor 20 is orange, then
we recolor 2 with orange and use a green/red swap at 4,1.  This gives green/purple
crossing pairs centered at 11.  So 19 and 20 are green and orange.
Now we recolor 9 with green.  Afterwards, a red/purple swap at 1,3,2
gives a purple triple centered at 4. 

\textbf{Case 3: 9 is blue.}
(10-11) Now 10 is orange, since all other colors are used on its neighborhood. 
If 11 is orange, then we have orange/red parallel pairs centered at 3.
So 11 is blue.

(12-13) If 12 is green, then we are actually in Case 2, by reflecting
horizontally and interchanging colors green and purple.  So we assume that 12
is not green; thus, 12 is purple.  Similarly, 17 is not green, 
by reflecting both vertically and horizontally, and interchanging both
orange/blue and green/purple.  Now 13 is green;
otherwise a green/red swap at 1,4,2 gives a green triple centered at 3.

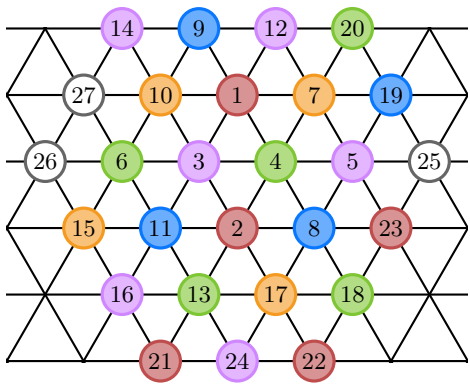
\begin{wrapfigure}{L}{0.45\textwidth}
\centering
\begin{tikzpicture}[scale=.885]
\begin{scope}
\clip(-3.46,-2.015) rectangle (3.46,3.015);
\foreach \ang in {0, 60, 120}
{
\begin{scope}[rotate=\ang]
\foreach \i in {-5,...,5}
\draw[thick] (-5.5, \i) -- (5.5,\i);
\end{scope}
}
\end{scope}
\tikzstyle{uStyle}=[shape = circle, minimum size = 15pt, inner sep =
1.5pt, outer sep = 0pt, draw, fill=white]

\begin{scope}[xscale=.577] 
\foreach \i/\j/\lab/\col in {
0/2/1/red, 0/0/2/red, -1/1/3/purple, 1/1/4/green,
3/1/5/purple, -3/1/6/green, 2/2/7/orange, 2/0/8/blue,
-1/3/9/blue, -2/2/10/orange, -2/0/11/blue, 1/3/12/purple,
-1/-1/13/green, -3/3/14/purple, -4/0/15/orange, -3/-1/16/purple,
1/-1/17/orange, 3/-1/18/green, 4/2/19/blue, 3/3/20/green,
-2/-2/21/red, 2/-2/22/red, 4/0/23/red, 0/-2/24/purple}
\draw[very thick] (\i,\j) node[uStyle, draw=my\col, fill=my\col!60!white] {\small{\lab}};
\foreach \i/\j/\lab/\col in {5/1/25/white, -5/1/26/white, -4/2/27/white}
\draw[very thick] (\i,\j) node[uStyle, draw=black!60!white, fill=white] {\small{\lab}};
\end{scope}
\end{tikzpicture}
\caption{Case 3 in the proof of Lemma~\ref{pair-lem}.\label{case3pair}}
\end{wrapfigure}

(14) Note that 27 is not purple, since 3 is purple but 6 and 9 have distinct
colors.  So, if 14 is not purple, then a purple/orange swap at 10,3 gives
purple/orange parallel pairs, centered at 1.
Thus, 14 is purple.  

(15-16) Now 15 and 16 must be orange and
purple.  If neither is orange, then recoloring 11 with orange gives orange/red
parallel pairs centered at 3.  If neither is purple, then a purple/blue swap at 11,3
gives blue/red crossing pairs centered at 4.  So 15 and 16 are orange and
purple.  Suppose that 15 is purple.  If 26 is blue, then we have blue/purple
crossing pairs centered at 6.  And if 26 is orange, then we have orange/purple
parallel pairs centered at 6.  So 26 is red.  This means that 27 is blue.
Now a green/orange swap at 6,10 gives green/red crossing pairs centered at 3.
So we assume 15 is not purple.  Thus, 15 is orange and 16 is purple.

(17-18) If 17 is purple, then we recolor 2 with orange, recolor 11 with red, and
use a red/green swap at 1,4.  This gives red/green parallel pairs centered at 3. 
Thus, 17 is orange.  Now 23 is not green, since 4 is green but 5 and 17
have distinct colors.  So 18 is green; otherwise, a green/blue swap at 8,4
gives green/blue parallel pairs centered at 2.

(19-20) If neither 19 nor 20 is blue, then a blue/orange swap at 7 gives
blue/red parallel pairs centered at 4.  If neither 19 nor 20 is green, then
a green/orange swap at 7,4 gives orange/red crossing pairs centered at 3.  So
19 and 20 are blue and green.
If 19 is green, then 23 is orange since 7 is orange.  Thus, 25 is red;
otherwise, we can recolor 5 red and get a red triple centered at 4.  Now a
blue/red swap at 8,2,11 gives red/green parallel pairs centered at 5. 
Thus, 19 is blue and 20 is green.

(21-24) Note that 24 cannot be red, since 2 is red and 16 and 17 have distinct
colors.  So 21 is red; otherwise, a red/green swap at 13,2,4,1 gives a green
triple centered at 3.  Similarly, 22 is red; otherwise, a red/orange swap at
2,17 and a red/green swap at 1,4 gives a red triple centered at 8.
Now 23 must be red; otherwise a red/blue swap at 8,2,11 gives 11 and 21 red but
16 and 17 with distinct colors, a contradiction.  Finally, 24 is purple.  If
not, then we recolor 17 purple and repeat the argument above showing that 17 is
orange, getting a contradiction.  Now an orange/blue swap at 17,8 gives
orange/red~parallel~pairs~centered~at~4. 

\textbf{Case 4: 9 is green.} 
(9-11) 
We assume that 10 is green; 
otherwise reflecting vertically (interchanging orange/blue) reduces to an
earlier case.  Further, either 11 or 24 is purple.  Otherwise, a purple/red
swap at 3 gives a purple triple centered at 4.  By possibly reflecting
vertically, we assume 11 is purple.

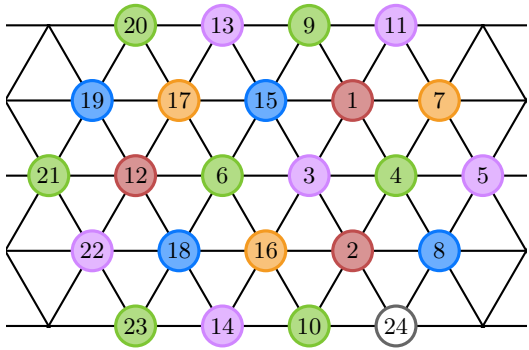
\begin{wrapfigure}{L}{0.45\textwidth}
\centering
\begin{tikzpicture}
\begin{scope}
\clip(-4.6,-1.015) rectangle (2.31,3.015);
\foreach \ang in {0, 60, 120}
{
\begin{scope}[rotate=\ang]
\foreach \i in {-5,...,5}
\draw[thick] (-5.5, \i) -- (5.5,\i);
\end{scope}
}
\end{scope}
\tikzstyle{uStyle}=[shape = circle, minimum size = 15pt, inner sep =
1.5pt, outer sep = 0pt, draw, fill=white]

\begin{scope}[xscale=.577] 
\foreach \i/\j/\lab/\col in {
0/2/1/red, 0/0/2/red, -1/1/3/purple, 1/1/4/green,
3/1/5/purple, -3/1/6/green, 2/2/7/orange, 2/0/8/blue,
-1/3/9/green, -1/-1/10/green, 1/3/11/purple, 
-5/1/12/red, 
-3/3/13/purple,
-3/-1/14/purple, 
-2/2/15/blue, 
-2/0/16/orange, 
-4/2/17/orange, 
-4/0/18/blue, 
-6/2/19/blue,
-5/3/20/green,
-7/1/21/green,
-6/0/22/purple,
-5/-1/23/green}
\draw[very thick] (\i,\j) node[uStyle, draw=my\col, fill=my\col!60!white] {\small{\lab}};
\foreach \i/\j/\lab/\col in {1/-1/24/orange}
\draw[very thick] (\i,\j) node[uStyle, draw=black!60!white, fill=white] {\small{\lab}};
\end{scope}
\end{tikzpicture}
\caption{Case 4 in the proof of Lemma~\ref{pair-lem}.\label{case4pair}}
\end{wrapfigure}

(12) Note that 15 and 16 must each be either orange or blue, and they must
have distinct colors; otherwise, we have parallel pairs centered at 3.  
Denote the colors of 15 and 16 by $\alpha$ and $\beta$, respectively.
If neither 13 nor 17 is purple, then an $\alpha$/purple swap at 15,3 gives
$\alpha$/red crossing pairs centered at 4.  And if neither 13 nor 17 is $\beta$,
then an $\alpha$/$\beta$ swap at 15 gives $\beta$/red parallel pairs centered at
3.  So 13 and 17 are $\beta$ and purple (in some order).  By a similar
argument, 18 and 14 are $\alpha$ and purple (in some order).  Thus, 12 is red;
otherwise, we recolor 6 red, which gives a red triple centered at 3.

(13-14) If 17 is purple, then either 13 is orange and 15 is blue or vice versa.  In the
first case, a blue/red swap at 15,1 gives red/purple crossing
pairs centered at 6.  In the second case, we recolor 1 blue, which gives
blue/purple parallel pairs centered at 15.  So 13 is purple.
Suppose 18 is purple.  If 15 is blue, then a red/blue swap at 15,1 gives
red/purple parallel pairs centered at 6.  If 15 is orange, then we recolor 1
blue and recolor 15 red.  Again, we get red/purple parallel pairs centered at 6.
So 14 is purple.

(15-18) Suppose 15 is orange and 16 is blue.  If 24 is purple, then recolor 1 blue, 2
orange, 15 red, and 16 red; this gives a red triple centered at 6.  So 24 is
orange.  Recolor 1 blue.  Use a red/purple swap at 2,3, and a red/orange
swap at 3,15.  This gives orange/purple parallel pairs centered at 16.
So 15 is blue and 16 is orange.
This implies that 17 is orange and 18 is blue.

(19-23) Note that 19 cannot be green, since 6 is green, but 12 and 13 have distinct
colors.  If 19 is purple, then a red/blue swap at 15,1 gives red/purple parallel
pairs centered at 17.  So 19 is blue.  If 20 is red, then a red/blue swap at
1,15 gives a red triple centered at 17.  So 20 is green.  Note that 21 or 22
must be green; otherwise, a red/green swap at 12,6 gives a red triple centered
at 3.  And 22 cannot be green, since 6 is green but 12 and 14 have distinct
colors.  So 21 is green.  If 22 is orange, then we recolor 12 purple and recolor
6 red, which gives a red triple centered at 3.  So 22 is purple.  Finally, 23
must be green; otherwise we have purple/red or purple/orange crossing pairs
centered at 18.  Now a blue/orange swap at 18,16 gives blue/orange parallel
pairs centered at 6.
\hfill $\qed$
\bigskip

Now we prove the main result of this section.

\begin{lem}
If $G$ is a 6-regular toroidal graph with edge-width at least 7, then every
5-coloring of $G$ is 5-equivalent to a 5-coloring that contains a good
4-template.  
\label{main-lem}
\end{lem}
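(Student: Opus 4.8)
The plan is simply to chain together the two principal results already established in this section. Given an arbitrary 5-coloring $\vph$ of $G$, Lemma~\ref{get-a-pair-lem} produces a sequence of Kempe swaps transforming $\vph$ into a 5-coloring $\vph'$ that contains a pair. Lemma~\ref{pair-lem} then applies to $\vph'$: it shows that $\vph'$ is 5-equivalent to a coloring containing a triple, parallel pairs, or crossing pairs, and, via its ``in particular'' clause, that $\vph'$ is \emph{nice} --- that is, 5-equivalent to some coloring containing a good 4-template. Since Kempe 5-equivalence is transitive, concatenating the two sequences of swaps shows that $\vph$ itself is 5-equivalent to a coloring containing a good 4-template, which is exactly the assertion.

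The only point worth spelling out is why this chaining is legitimate. ``Nice'' is defined in terms of 5-equivalence to \emph{some} coloring with a good 4-template, so niceness is automatically inherited along Kempe swaps; and Lemmas~\ref{triple-lem}, \ref{parallel-pairs-lem}, \ref{crossing-pairs-lem}, and~\ref{helper-lem} have already verified that every coloring containing a triple, parallel pairs, crossing pairs, or a monochromatic copy of the configuration in Figure~\ref{helper-fig} is nice. So Lemma~\ref{main-lem} itself requires no further combinatorial work beyond invoking Lemmas~\ref{get-a-pair-lem} and~\ref{pair-lem} together with transitivity.

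I therefore do not expect any genuine obstacle in the proof of Lemma~\ref{main-lem} as stated: the difficulty has been front-loaded into the earlier lemmas. Lemmas~\ref{get-a-pair-lem} and~\ref{pair-lem} are the hard part --- each carries out a lengthy case analysis on the colors that can occur in a small ball around a vertex, repeatedly performing Kempe swaps on short 2-colored components and closing branches either by reducing to a previously handled small configuration or by exhibiting an explicit good 4-template (justified by Lemmas~\ref{good-template-lem} and~\ref{loc-help-lem} together with the bonus-vertex construction). Once those lemmas are in hand, Lemma~\ref{main-lem} follows in a single line.
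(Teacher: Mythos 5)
Your proposal is correct and follows essentially the same route as the paper: apply Lemma~\ref{get-a-pair-lem} to reach a coloring with a pair, then apply Lemma~\ref{pair-lem} (and the earlier triple/parallel/crossing lemmas that it draws on) together with transitivity of 5-equivalence to reach a coloring containing a good 4-template. Your version is in fact slightly cleaner, since it invokes the ``in particular'' clause of Lemma~\ref{pair-lem} directly rather than re-stating the chain through Lemmas~\ref{triple-lem}, \ref{parallel-pairs-lem}, and~\ref{crossing-pairs-lem} as the paper does.
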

\begin{proof}
Let $G$ be a graph satisfying the hypothesis, and let $\vph_0$ be a 5-coloring of
$G$.  By Lemma~\ref{get-a-pair-lem}, $\vph_0$ is 5-equivalent to some 5-coloring
$\vph_1$ that contains a pair.  By Lemma~\ref{pair-lem}, $\vph_1$ is
5-equivalent to some 5-coloring $\vph_2$ that contains either a triple, or
parallel pairs, or crossing pairs.  By Lemmas~\ref{triple-lem},
\ref{parallel-pairs-lem}, and~\ref{crossing-pairs-lem}.  $\vph_2$ is
5-equivalent to some 5-coloring $\vph_3$ the contains a triple, and thus
contains a good 4-template.
\end{proof}

\section{Extensions and Open Questions}
\label{ext-sec}
We begin the section by sketching the proof of Theorem~\ref{main-cor}.
For reference, we repeat the statement below.

\begin{thmcor}
If $G$ is a triangulated toroidal grid $T[a\times b]$ with $a\ge 6$ and $b\ge
6$, then all 5-colorings of $G$ are 5-equivalent.
\end{thmcor}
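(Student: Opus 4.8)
The plan is to imitate the proof of the Main Theorem almost verbatim. First, since $T[a\times b]$ has edge-width equal to $\min(a,b)$, the Main Theorem already gives the result whenever $a\ge 7$ and $b\ge 7$; and by the symmetry $T[a\times b]\cong T[b\times a]$ we may assume $a\le b$, so the only outstanding case is $G=T[6\times b]$ with $b\ge 6$. Such a $G$ is 6-regular, vertex-transitive, has a unique toroidal embedding (Negami), and is 4-colorable by the Collins--Hutchinson result quoted before Theorem~\ref{6reg-4col}; its edge-width is exactly $6$, the shortest non-contractible cycles being the ``vertical'' $6$-cycles $(1,j),(2,j),\ldots,(6,j)$ (when $b=6$ there are also horizontal and diagonal $6$-cycles, which I treat separately at the end). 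Because $G$ is 4-colorable, the conclusion of Lemma~\ref{main-helper} holds for $G$ directly via Lemma~\ref{switch-lem}, with no exceptional circulants to dispose of; and Lemmas~\ref{easy-lem}, \ref{template-lem}, \ref{switch-lem}, and \ref{good-template-lem} all apply to $G$ with no change. Hence, exactly as in the proof of the Main Theorem, it suffices to re-establish the analogue of Lemma~\ref{main-lem}: every 5-coloring of $T[6\times b]$ is 5-equivalent to one containing a good 4-template.

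The only place the hypothesis ``edge-width at least $7$'' is used in Section~\ref{templates-sec} is local: it ensures that, in each figure, no two vertices drawn as distinct are equal in $G$, no edge of $G$ goes undrawn, and Lemma~\ref{loc-help-lem} applies, so that the subgraphs $H$ certifying good templates are well-behaved. When $a=6$ the only obstruction is the vertical $6$-cycles, so two drawn-distinct vertices coincide (or are joined by an undrawn edge) in $G$ only if they are related by a deck transformation of such a $6$-cycle, i.e.\ differ by $(\pm 6,0)$. The plan is thus to record an edge-width-$6$ analogue of Lemma~\ref{loc-help-lem} --- a subgraph $H$ of diameter at most $4$ is locally connected unless it contains four neighbors of an excluded vertex \emph{or} it contains an entire vertical $6$-cycle --- and then to re-examine, figure by figure, each good $4$-template claimed in Lemmas~\ref{triple-lem}, \ref{parallel-pairs-lem}, \ref{helper-lem}, \ref{crossing-pairs-lem}, \ref{get-a-pair-lem}, and \ref{pair-lem}, checking that its certifying subgraph $H$ is drawn inside a window too narrow in the vertical direction to contain a vertical $6$-cycle or to identify a drawn-distinct pair, so that the $4$-degeneracy order produced by Lemma~\ref{good-template-lem} is still valid over $T[6\times b]$. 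Granting these local checks, every argument in those lemmas --- each a manipulation by Kempe swaps inside a small subgraph --- carries over word for word, the analogue of Lemma~\ref{main-lem} follows, and Theorem~\ref{main-cor} follows by the argument of the Main Theorem.

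The main obstacle is precisely this bookkeeping: a non-contractible cycle of length $6$ is only one shorter than the safe bound, so each certifying subgraph in Section~\ref{templates-sec} must be inspected individually to confirm it neither wraps a vertical $6$-cycle nor identifies a drawn-distinct pair. I expect most such $H$ to be visibly small enough, but some of the larger good templates (those whose witnessing subgraphs have seven or eight vertices) may require a slightly different choice of bonus vertex or degeneracy prefix once reinterpreted in $T[6\times b]$. The genuinely delicate sub-case is $b=6$, where $G=T[6\times 6]$ has only $36$ vertices and edge-width $6$ in both directions, so a diameter-$4$ subgraph can be large relative to $G$ and can wrap in either direction; handling it may need a few ad hoc adjustments to the figures or a short self-contained argument (possibly a finite check), and that is where I would expect the bulk of the remaining effort to lie.
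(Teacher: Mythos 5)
Your overall plan matches the paper's: reduce to $a=6$ by symmetry, note that 4-colorability makes Lemma~\ref{switch-lem} (and hence Lemma~\ref{main-helper}) go through with no circulant exceptions, and observe that the only appeal to ``edge-width at least 7'' in Section~\ref{templates-sec} is through Lemma~\ref{loc-help-lem}, so it suffices to re-verify local connectivity of the certifying subgraphs against the short non-contractible 6-cycles. That much is exactly what the paper does.

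Two points, though. First, your proposed analogue of Lemma~\ref{loc-help-lem} is misstated: the obstruction is not $H$ containing an \emph{entire} vertical 6-cycle (that case is harmless, since all six vertices and all the short paths between them are then inside $H$), but $H$ containing \emph{exactly five} of the six vertices of some non-contractible 6-cycle. In that situation two vertices of $H$ can be at $G$-distance 2 through the missing sixth vertex while being far apart in $H$, which is precisely what breaks the proof of Lemma~\ref{loc-help-lem}. Second, and more importantly, you stop at ``re-examine each figure and make ad hoc adjustments (different bonus vertex, different degeneracy prefix, maybe a finite check),'' but that is the part the paper actually resolves, and the device it uses is different from what you guess: when a certifying subgraph $H$ does contain five of the six vertices of a length-6 non-contractible cycle $C$ (this happens in the proofs of Lemmas~\ref{parallel-pairs-lem} and~\ref{crossing-pairs-lem} but not in those of Lemmas~\ref{triple-lem} and~\ref{helper-lem}), one adds the missing sixth vertex $v$ of $C$ to \emph{both} $H$ and the template $T$, making $v$ a singleton color class of $T$. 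Then $H$ contains all of $C$, the five-of-six obstruction disappears, and one checks that $G-H-\{v\}$ is still connected so $H$ remains well-behaved. Without this specific fix your plan has a genuine gap: you have correctly located the bottleneck but left the actual repair as an open ``bookkeeping'' task, and your suggested repairs (changing the bonus vertex or the prefix) are not the mechanism that works.
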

\begin{proof}[Proof Sketch.]
We sketch how to modify the proof of the Main Theorem to prove
Theorem~\ref{main-cor}.
Recall that our Main Theorem handles the case that $a\ge 7$ and $b\ge 7$, since
then $G$ has edge-width at least 7.  By symmetry, we can assume that $a\le b$.
So assume that $a=6$.  By Lemma~\ref{4colorable-lem}, we know $G$ is
4-colorable; so Lemma~\ref{switch-lem} still applies.  In fact, the only place
that our proof uses edge-width 7 is when we apply Lemma~\ref{loc-help-lem} to
show that a subgraph $H$ is locally connected.  However, in the proof of
Lemma~\ref{loc-help-lem} we only need the fact that $H$ does not contain five
vertices on a non-contractible cycle of length 6.
The graph $T[6\times 6]$ has exactly 18 non-contractible cycles of length 6.
These run along the 6 rows, 6 columns, and 6 diagonals.  And for $T[6\times b]$
with $b\ge 7$, we have $b$ non-contractible cycles of length 6; each runs along
a column.  So, to adapt
the proof to our present situation, it suffices to check that $H$ does not
contain exactly 5 vertices on any of these non-contractible 6-cycles.  In the
proofs of Lemmas~\ref{triple-lem} and~\ref{helper-lem}, this is true.  However,
in the proofs of Lemmas~\ref{parallel-pairs-lem}
and~\ref{crossing-pairs-lem}, there is sometimes a single non-contractible cycle $C$
of length 6 with 5 of its vertices in $H$.  In this case, we simply add
the final vertex $v$ of $C$ to $T$ and to $H$, making $v$ the sole vertex in its
color in $T$.  It is straightforward to check that the subgraph $G-H-\{v\}$ is still
connected.  Thus, the proof of Lemma~\ref{loc-help-lem} still applies.
\end{proof}

In the introduction we mentioned that Mohar proved that if $G$ is a planar
graph with $\chi(G)=k$, then all $(k+1)$-colorings of $G$ are $(k+1)$-equivalent.  
Mohar also constructed, for any positive integers $k,\ell$ with $k<\ell$ a
$k$-chromatic graph with a single $k$-coloring and with two $\ell$-colorings
that are not $\ell$-equivalent (the graph is simply the categorical product of
$K_k$ and $K_{\ell}$; the details are in Proposition~2.1 of~\cite{mohar}).  We
would still like to find larger classes of graphs
$G$ for which all $(\chi(G)+1)$-colorings are $(\chi(G)+1)$-equivalent.  Our
conjectures below focus on graphs embedded in the torus, and other surfaces.
We first state a lemma that will likely be useful in studying this problem.
We call a template $T$ in a graph $G$ \emph{$k$-good} if $G_T$ is
$k$-degenerate.

\begin{lem}
\label{switch-gen-lem}
Let $G$ be a graph with $\chi(G)\le k$.  If $\vph_1$ and $\vph_2$ are
$(k+1)$-colorings of $G$ that contain monochromatic $k$-good templates $T_1$
and $T_2$, respectively, then $\vph_1$ and $\vph_2$ are $(k+1)$-equivalent.
\end{lem}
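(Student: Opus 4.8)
The plan is to repeat the proof of Lemma~\ref{switch-lem} essentially verbatim, with $k$ and $k+1$ playing the roles of $4$ and $5$. The one ingredient that must first be upgraded is Lemma~\ref{template-lem}; I claim the natural generalization holds: if $T$ is a $k$-good template in $G$, then all $(k+1)$-colorings of $G$ containing $T$ are $(k+1)$-equivalent. The proof of Lemma~\ref{template-lem} carries over unchanged. Contract $T$ to form $G_T$, which is $k$-degenerate by definition of $k$-good; apply Lemma~\ref{easy-lem} with $d=k$ (using $k<k+1$) to conclude that all $(k+1)$-colorings of $G_T$ are $(k+1)$-equivalent; and lift the witnessing sequence of Kempe swaps from $G_T$ back to $G$ exactly as before, simulating an $\alpha/\beta$ swap at a contracted vertex by performing that swap at every vertex of $G$ identified to form it.

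\textbf{Main argument.} Since $\chi(G)\le k$, fix a proper $k$-coloring $\vph_0$ of $G$; by relabeling colors we may assume $\vph_0$ does not use color $k+1$. Form $\vph_1'$ (resp.\ $\vph_2'$) from $\vph_0$ by recoloring every vertex of $T_1$ (resp.\ $T_2$) with color $k+1$; these are proper colorings because each $T_i$ is independent. I would then observe that $\vph_i'$ is $(k+1)$-equivalent to $\vph_0$: process the vertices of $T_i$ one at a time, and when recoloring a vertex $v$ of color $\alpha$ to color $k+1$, note that the Kempe component of $v$ under colors $\{\alpha,k+1\}$ is exactly $\{v\}$ --- since $\vph_0$ uses no color $k+1$ and $T_i$ is independent (so no already-processed vertex is a neighbor of $v$), and since the coloring is proper (so no neighbor of $v$ has color $\alpha$). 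Hence each such recoloring is a single valid Kempe swap. Finally, $\vph_1$ and $\vph_1'$ both contain the $k$-good monochromatic template $T_1$, so they are $(k+1)$-equivalent by the generalized Lemma~\ref{template-lem}; likewise $\vph_2$ and $\vph_2'$. Chaining the $(k+1)$-equivalences of $\vph_1$ with $\vph_1'$, of $\vph_1'$ with $\vph_0$, of $\vph_0$ with $\vph_2'$, and of $\vph_2'$ with $\vph_2$, and using transitivity of $(k+1)$-equivalence, gives that $\vph_1$ and $\vph_2$ are $(k+1)$-equivalent.

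\textbf{Expected obstacle.} There is essentially no obstacle: every step is a routine transcription of the corresponding step for $k=4$, and nothing in the argument relied on the specific value $4$ beyond the inequality ``degeneracy $<$ number of colors,'' which here is $k<k+1$. The only point worth stating carefully is the claim that recoloring an independent set to a color unused by the current coloring is realized by a sequence of Kempe swaps, namely the singleton-component observation above; this is exactly the fact used implicitly in the proof of Lemma~\ref{switch-lem}.
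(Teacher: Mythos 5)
Your proof is correct and is exactly the argument the paper has in mind: the authors explicitly omit the proof of Lemma~\ref{switch-gen-lem} with the remark that it is ``nearly identical to that of Lemma~\ref{switch-lem},'' and your transcription (generalize Lemma~\ref{template-lem} via Lemma~\ref{easy-lem}, then route through a $k$-coloring $\vph_0$ and recolor each $T_i$ with the unused color) is precisely that adaptation. If anything, you are more careful than the original, since you spell out the singleton-Kempe-component observation justifying that recoloring an independent set with a fresh color is achieved by a sequence of valid Kempe swaps, a point the proof of Lemma~\ref{switch-lem} leaves implicit.
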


We omit the proof, since it is nearly identical to that of
Lemma~\ref{switch-lem}.  

\begin{conj}
If $G$ is a triangulated toroidal grid $T(a\times b)$ (say with $a\ge 3$ and
$b\ge 3$), then all 5-colorings of $G$ are 5-equivalent.
\end{conj}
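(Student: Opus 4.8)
The plan is to push the argument behind Theorem~\ref{main-cor} further down, from $a\ge 6$ to $a\in\{3,4,5\}$. By symmetry assume $a\le b$; the Main Theorem and Theorem~\ref{main-cor} already settle every case with $a\ge 6$, so what remains is $T[3\times b]$, $T[4\times b]$, and $T[5\times b]$ with $b\ge a$. First I would record that none of these grids occurs among the exceptions of Theorem~\ref{6reg-4col} (the exceptional grids all have a nontrivial shift, or a side of length $2$), so each is $4$-colorable; hence Lemma~\ref{switch-lem} applies and, exactly as in the proof of the Main Theorem, it suffices to show that every $5$-coloring is $5$-equivalent to one containing a monochromatic good $4$-template. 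The whole chain Lemma~\ref{get-a-pair-lem} $\Rightarrow$ Lemma~\ref{pair-lem} $\Rightarrow$ Lemmas~\ref{triple-lem}, \ref{parallel-pairs-lem}, \ref{helper-lem}, \ref{crossing-pairs-lem} $\Rightarrow$ Lemma~\ref{main-lem} is purely local --- every figure lives inside a ball of radius at most~$4$ --- so in principle it all survives. The one place it genuinely breaks is Lemma~\ref{loc-help-lem}, where ``edge-width at least $7$'' was used to rule out that a diameter-$\le 4$ subgraph $H$ meets a short non-contractible cycle in too many vertices, together with the parallel point that two vertices drawn as distinct could coincide in $G$.

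So the core of the work is a careful bookkeeping pass through all the figures of Section~\ref{templates-sec}, now redrawn on the torus. The recurring fix is the one already used for $a=6$ in the proof sketch of Theorem~\ref{main-cor}: whenever the subgraph $H$ witnessing a good template contains all but one (more generally, all but a few) vertices of a short non-contractible cycle $C$, one adjoins the missing vertex or vertices of $C$ to both $H$ and the template $T$, each new vertex as its own color class. Then one re-verifies the hypotheses of Lemma~\ref{good-template-lem}: that the enlarged $H$ is still locally connected (hence, being of diameter $\le 4$, well-behaved), that $G$ minus the enlarged $H$ is still connected, and that the $4$-degeneracy prefix together with the bonus-vertex bookkeeping still goes through. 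One must also check, case by case, that the various Kempe swaps invoked --- so far justified using distances in the planar drawing --- do not accidentally recolor a vertex identified with another in $G$; where they do, one either enlarges the template as above or re-chooses the swap.

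For $a=4$ and $a=5$ I expect this to be essentially a longer version of the $a=6$ discussion: the relevant short non-contractible cycles are the column $4$- or $5$-cycles, with no shorter ones intervening (except that for the square grids $T[4\times 4]$ and $T[5\times 5]$ there are also short ``diagonal'' non-contractible cycles to watch), so in a radius-$4$ ball at most a bounded number of such cycles are ``seen twice'' and the enlargement fix handles them. The hard part will be $a=3$. There the edge-width is only~$3$, every column is a triangle, and a ball of radius~$4$ wraps around the torus many times, so the planar-triangulation heuristic that drives all the figures essentially collapses: a radius-$4$ subgraph can meet many column-triangles in two or three of their vertices at once, there are numerous unshown edges and numerous forced coincidences, and the finite case analyses of Lemmas~\ref{triple-lem}--\ref{pair-lem} were never designed for this.

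For $a=3$ I would therefore abandon the planar drawings and exploit the explicit structure of $T[3\times b]$ --- a cyclic sequence of $b$ triangles, consecutive triangles joined by a perfect $C_6$-linkage --- proving the needed ``$5$-equivalent to a coloring with a good template'' statement by a transfer-matrix / sliding-window argument along this width-$3$ cylinder, while the genuinely small instances $T[3\times 3]$, $T[3\times 4]$, $T[3\times 5]$ (and possibly a few more, and similarly $T[4\times 4]$, $T[4\times 5]$, $T[5\times 5]$) I would simply verify by computer. Making the transfer-matrix step rigorous --- in particular identifying the short list of ``local patterns'' that can occur, showing each is $5$-equivalent to one containing a good template, and controlling how a Kempe swap performed in one window propagates to neighboring windows --- is where I expect the real difficulty to lie, and it is presumably why the statement is left only as a conjecture.
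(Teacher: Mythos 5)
This statement is Conjecture~1 of the paper, which the paper explicitly leaves open; there is no proof in the paper to compare against. What you have written is not a proof but a research program, and you are candid about that, so the correct assessment is that the ``gap'' here is the entire remaining argument.

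Your plan is sensible as far as it goes: you correctly observe that $T[a\times b,1]$ with $a,b\ge 3$ is absent from the exception list of Theorem~\ref{6reg-4col}, so 4-colorability and hence Lemma~\ref{switch-lem} still apply, and you correctly locate the only place edge-width $\ge 7$ is used, namely Lemma~\ref{loc-help-lem}. But I think you underestimate the difficulty of the cases $a\in\{4,5\}$. For $a=6$ the authors' fix is to absorb the \emph{one} extra vertex of the \emph{one} offending non-contractible 6-cycle into the template; this works because a diameter-4 subgraph cannot wrap all the way around a 6-cycle more than once, and because the subgraphs $H$ in the figures were drawn with a generous margin below edge-width 7. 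When $a=4$, a ball of radius 4 can wrap around the torus twice in the short direction, so a single $H$ can intersect several non-contractible 4-cycles, and distinct vertices in the planar figures can coincide at distance as small as 4. Absorbing every ``missing'' vertex of every short cycle into $T$ is no longer a small perturbation; it can substantially enlarge $T$, change degrees in $G_T$, and threaten both 4-degeneracy and the connectivity of $G-H$, so the hypotheses of Lemma~\ref{good-template-lem} would genuinely need to be re-established from scratch rather than ``re-verified.'' You also need to re-audit every Kempe swap in Lemmas~\ref{triple-lem}--\ref{pair-lem} for hidden coincidences and hidden edges, which at edge-width 4 is not a bookkeeping pass but a full redo of the case analysis. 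In short, $a=4,5$ behave much more like $a=3$ than like $a=6$.

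For $a=3$ your honest assessment is right: the whole figure-based framework collapses, and a transfer-matrix or explicit structural argument on $T[3\times b]$ (which is a circular stack of $b$ triangles, or equivalently $C_{3b}[1,3,4]$ up to relabeling) is a plausible alternative, but the crucial step --- showing each local window pattern is 5-equivalent to one admitting a good template, and that swaps localize --- is unproven and nontrivial. Computer verification of the small cases is reasonable but does not establish the conjecture for all $b$. So the proposal is a credible roadmap, not a proof, and the conjecture remains open as the paper states.
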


\begin{conj}
If $G$ is a 4-chromatic toroidal graph, then all 5-colorings of $G$ are
5-equivalent.
\end{conj}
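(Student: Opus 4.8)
\medskip

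\noindent\textbf{Proof proposal.}
The plan is to follow the architecture of the Main Theorem, and to prove the (formally stronger) statement that \emph{all $5$-colorings of every $4$-colorable toroidal graph are $5$-equivalent}; this implies the conjecture and has the advantage that the hypothesis is closed under taking subgraphs. We argue by induction on $|V(G)|$, with small graphs trivial. If $G$ has a vertex $w$ with $d(w)\le 4$, then $G-w$ is a smaller $4$-colorable toroidal graph, so by induction all its $5$-colorings are $5$-equivalent, and Lemma~\ref{easy-lem} extends this to $G$. So we may assume $\delta(G)\ge 5$. In this case, exactly as in the proof of the Main Theorem, it suffices to show every $5$-coloring of $G$ is $5$-equivalent to one containing a monochromatic $4$-good template: since $\chi(G)\le 4$, Lemma~\ref{switch-gen-lem} (with $k=4$) then gives that all such colorings---hence all $5$-colorings of $G$---are $5$-equivalent. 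Thus the task reduces to proving the analogue of Lemma~\ref{main-lem} for toroidal graphs of minimum degree at least $5$.

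The bulk of the work is to adapt Section~\ref{templates-sec} from the $6$-regular setting to this wider one. By Euler's formula, $\sum_v(6-d(v))\ge 0$, so a toroidal graph with $\delta(G)\ge 5$ is ``almost'' a $6$-regular triangulation: vertices of degree $\ne 6$ are constrained, and elsewhere $G$ looks locally exactly like the $6$-regular triangulation of the plane used throughout Section~\ref{templates-sec}. The template toolkit must be re-derived in this generality. Lemma~\ref{good-template-lem} needs a version that tolerates vertices of degree $5$ or $>6$ in the well-behaved subgraph $H$: its ``grow $R$'' argument should survive, but the cyclic-neighbor case check must be redone degree by degree, and one should be careful that longer cyclic neighbor lists (from high-degree vertices) produce \emph{more} subcases, not fewer. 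Likewise Lemma~\ref{loc-help-lem} must be re-examined with the edge-width hypothesis replaced by one forbidding only those short non-contractible cycles that could interfere with subgraphs of the small diameters actually used. This re-derivation is mechanical but voluminous.

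The genuine obstacle is the analogue of Lemmas~\ref{triple-lem}--\ref{pair-lem}: showing that every $5$-coloring of $G$ is $5$-equivalent to one containing a triple (and hence a $4$-good template). In the $6$-regular case this is a long Kempe-swap case analysis that is tractable precisely because the local picture is completely rigid---every vertex has the same hexagonal neighborhood, so a few figures suffice. With $\delta(G)\ge 5$ the local picture varies, so the argument cannot be driven off a fixed template figure. I expect the right approach is first to prove a local structure theorem for $4$-colorable toroidal graphs---classifying the possible neighborhoods of degree-$5$, degree-$6$, and higher-degree vertices, in the spirit of how Theorem~\ref{6reg-lem} identifies the $6$-regular case with exactly the graphs $T[a\times b,c]$---and then to run the triple-finding swaps on each configuration. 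One encouraging point: a degree-$5$ vertex already forces a \emph{pair} by Pigeonhole, so the difficulty is concentrated on regions that are locally the $6$-regular triangulation with only a bounded amount of defect from the few vertices of degree $\ne 6$. Assembling such a structure theorem and checking that the swap arguments of Section~\ref{templates-sec} survive uniformly across the resulting configurations is the crux; once a $4$-good template is located, Lemma~\ref{switch-gen-lem} finishes the proof as before.
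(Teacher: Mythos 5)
The statement you are addressing is Conjecture~2 in the paper: it is posed as an open problem, not a theorem, and the paper gives no proof of it. So there is no proof in the paper to compare against; the only question is whether your proposal actually settles the conjecture. It does not. The reductions in your opening paragraph are sound --- strengthen to all $4$-colorable toroidal graphs, induct via Lemma~\ref{easy-lem} to reach $\delta(G)\ge 5$, and reduce to finding a monochromatic $4$-good template, which closes out via Lemma~\ref{switch-gen-lem} with $k=4$. But the heart of the argument --- the analogue of Lemma~\ref{main-lem}, i.e.\ that every $5$-coloring of such a $G$ is $5$-equivalent to one containing a $4$-good template --- is precisely what you leave unproved. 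You first call the adaptation of Lemmas~\ref{good-template-lem} and~\ref{loc-help-lem} ``mechanical but voluminous,'' then more candidly call the analogue of Lemmas~\ref{triple-lem}--\ref{pair-lem} ``the crux'' and gesture at a structure theorem you ``expect'' to exist. That is a research program, not a proof, and you are honest about it.

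Beyond the acknowledged gap, there are concrete reasons to doubt the program as sketched. The Euler-formula heuristic that $\delta(G)\ge 5$ makes $G$ ``almost a $6$-regular triangulation'' is false as stated: a toroidal graph with minimum degree $5$ need not embed as a triangulation at all (it may have many quadrilateral or larger faces), and even triangulations can have degree-$5$ and degree-$7$ vertices in comparable abundance with no bound on their total number. The entire machinery of Section~\ref{templates-sec} rests on the rigidity supplied by Theorem~\ref{6reg-lem}: every vertex sits in the same hexagonal patch, which is what makes the fixed figures and their twelve local symmetries a legitimate exhaustive case analysis. Nothing replaces that in the $\delta\ge 5$ setting, and the triple-finding lemmas repeatedly rely on specific vertices being forced to exist at specific positions in that patch. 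Your pigeonhole observation that a degree-$5$ vertex forces a pair is true but does not help: the paper already gets a pair easily in the $6$-regular case (Lemma~\ref{get-a-pair-lem}); the hard step is climbing from a pair to a triple or other good template, and the degree-$5$ case offers no obvious shortcut there. In short, the proposal correctly identifies the missing ingredient but does not supply it, and the ``almost $6$-regular'' premise on which the hoped-for structure theorem rests is not justified.
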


\begin{conj}
If $G$ is a toroidal graph, then all 5-colorings of $G$ are 5-equivalent.
\end{conj}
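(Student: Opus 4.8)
The plan is to prove the conjecture by a case split on $\chi(G)$. When $\chi(G)\ge 6$ there are no $5$-colorings of $G$, so the statement is vacuous. When $\chi(G)\le 4$, the graph $G$ is $4$-colorable, so by Lemma~\ref{switch-lem} the whole problem reduces to one statement: every $5$-coloring of $G$ is $5$-equivalent to one that contains a good $4$-template (which is in particular a monochromatic good template, as Lemma~\ref{switch-lem} requires; Lemma~\ref{switch-gen-lem} is the analogue for larger $\chi$). The case $\chi(G)=5$ has no such crutch, since $G$ has no $4$-coloring to serve as a ground state, and I treat it separately in the last paragraph.

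For $\chi(G)\le 4$ I would rerun the reduction chain of Section~\ref{templates-sec}: every $5$-coloring is $5$-equivalent to one with a pair (Lemma~\ref{get-a-pair-lem}), then to one with a triple or parallel pairs or crossing pairs (Lemma~\ref{pair-lem}), then to one with a good $4$-template (Lemmas~\ref{triple-lem}, \ref{parallel-pairs-lem}, \ref{crossing-pairs-lem}), and finish via Lemma~\ref{switch-lem}. Two new difficulties appear. First, $G$ need not be $6$-regular; a vertex of degree below $6$ only makes $4$-degeneracy easier to certify, so the cost is mainly that each figure-driven lemma sprouts additional cases once neighborhoods may be smaller. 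Second, and more seriously, $G$ may have small edge-width, so the repeated appeals to ``we may ignore non-contractible cycles'' around Lemma~\ref{loc-help-lem} break down, because a subgraph of diameter $4$ can wrap around the torus. I see two routes. One extends the patching of Section~\ref{ext-sec}: whenever a short non-contractible cycle $C$ meets one of our diameter-$4$ subgraphs in several vertices, add the remaining vertices of $C$ to the template as their own color classes; since $C$ has length at most $6$, there are only finitely many local configurations to catalogue. The other cuts $G$ along a shortest non-contractible cycle $C$ to obtain a planar graph $G^\ast$ with two distinguished faces, each bounded by a copy of $C$; a $5$-coloring of $G$ is exactly a $5$-coloring of $G^\ast$ that agrees on the two copies of each vertex of $C$, and one would adapt Meyniel's theorem~\cite{meyniel} that all $5$-colorings of a planar graph are $5$-equivalent to this ``identified-boundary'' version, with an induction on edge-width.

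For $\chi(G)=5$, I would mirror the treatment of $C_{37}[1,10,11]$ in Lemma~\ref{main-helper}: exhibit a small family of ``canonical'' $5$-colorings---typically a $4$-coloring of $G$ with a well-chosen independent set recolored by a fifth color---and show every $5$-coloring of $G$ is $5$-equivalent to one of them. The structural input is that $G$ contains a $5$-critical subgraph $G'$; a toroidal graph has at most $3\,|V|$ edges, while the Kostochka--Yancey bound forces a $5$-critical graph to have at least about $\tfrac94\,|V|$ edges, so $G'$ is sparse, and one hopes this constrains its colorings tightly enough that the canonical family works. This is where I expect the main obstacle to lie: unlike the $6$-critical case, there are infinitely many $5$-critical toroidal graphs, so no finite obstruction list is available, and the canonical-coloring trick is currently known to succeed only in isolated instances. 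A classification of $5$-critical toroidal graphs of bounded edge-width, in the spirit of Thomassen's work on $6$-critical graphs on surfaces, would likely settle this case; short of that, $\chi(G)=5$ combined with small edge-width is the point at which the methods of the present paper---which always assume edge-width at least $6$---give no leverage, and where essentially all of the remaining work lies.
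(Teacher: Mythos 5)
The statement you have been asked to prove is Conjecture~3 of the paper; the paper does not prove it, and explicitly presents it as open. Your submission is not a proof but a research program, and to your credit you say so: you flag that the $\chi(G)=5$ case is ``where essentially all of the remaining work lies'' and that the present methods ``give no leverage'' there. That is an honest assessment, not a proof, so there is nothing to compare against a paper argument --- none exists.

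A few substantive remarks on the plan itself. The case split on $\chi(G)$ is sensible, and the $\chi(G)\ge 6$ case is indeed vacuous. But the $\chi(G)\le 4$ case is far from routine even granting the machinery of Section~\ref{templates-sec}: every lemma there (\ref{triple-lem} through \ref{pair-lem}) is proved by a neighborhood-by-neighborhood case analysis that presupposes $G$ is a $6$-regular triangulation with edge-width at least $7$, so that each vertex has a cyclic fan of exactly six distinct neighbors and Lemma~\ref{loc-help-lem} applies. Dropping $6$-regularity does not merely ``sprout additional cases'' --- it removes the triangulated-grid structure on which the entire figure-driven analysis and the definition of a ``triple'' (three independent vertices sharing a common neighbor in a $6$-fan) rest. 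Your second route --- cutting along a shortest non-contractible cycle and adapting Meyniel's planar theorem to an identified-boundary version --- is the more promising direction, but an ``identified-boundary Meyniel theorem'' is itself a nontrivial open step, and you do not indicate how Kempe chains that cross the cut would be controlled. For $\chi(G)=5$, as you note, the canonical-coloring trick of Lemma~\ref{main-helper} is known to work only for a handful of circulants; the Kostochka--Yancey sparsity bound narrows the search but does not supply a finite obstruction list, and without one the argument has no base to stand on. So: correct identification of the difficulties, no resolution of any of them.
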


\begin{conj}
For every surface $S$ there exists $c_S$ such that if $G$ embeds in $S$
with edge-width at least $c_S$, then all 5-colorings of $G$ are 5-equivalent.
\end{conj}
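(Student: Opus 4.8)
\noindent\textit{Proof proposal.}
The plan is to imitate the architecture of the Main Theorem's proof: combine a \emph{reduction step} --- every $5$-coloring of $G$ is $5$-equivalent to one containing a monochromatic ``good template'', i.e.\ an independent set (or small union of independent sets) whose contraction drops the degeneracy of $G$ below the number of colors in play --- with a \emph{switching step} along the lines of Lemma~\ref{switch-gen-lem}. If $\chi(G)\le 4$, then Lemma~\ref{switch-gen-lem} (with parameter $4$) says it suffices to show that every $5$-coloring of $G$ is $5$-equivalent to one containing a monochromatic $4$-good template. If $\chi(G)=5$, however, what we want is ``all $\chi(G)$-colorings are $5$-equivalent'', a different and harder claim that Lemma~\ref{switch-gen-lem} does not deliver. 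So the conjecture splits into (i) controlling $\chi(G)$, (ii) the reduction step when $\chi(G)\le 4$, and (iii) the case $\chi(G)=5$.

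For (i), and for the geometric setup, I would invoke the theory of locally planar graphs. By Thomassen's theorem there is a constant depending only on $S$ above which every graph embedded in $S$ is $5$-colorable (indeed $5$-list-colorable); choosing $c_S$ at least this large, we may assume $\chi(G)\le 5$. Large edge-width also makes $G$ \emph{locally planar}: every subgraph of diameter bounded in terms of $c_S$ embeds in a disk. This is precisely the ingredient that let Lemmas~\ref{good-template-lem} and~\ref{loc-help-lem} ``essentially ignore'' non-contractible cycles, and it reduces the problem to a purely local, planar-flavored question about colorings of finite patches. One caveat: those two lemmas are stated for $6$-regular $G$ --- their proofs walk around a cyclic list $w_1,\dots,w_6$ of neighbors --- so they must be reworked to allow vertices of arbitrary degree, most naturally via a discharging argument that, from local planarity together with a sparsity bound of the form $|E(H)|\le 3|V(H)|+O_S(1)$, both locates a region carrying a good template and builds the required degeneracy order of the contracted graph.

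The hard part --- the reason this is only conjectured --- is carrying out (ii) and (iii) without the crutch of $6$-regularity and vertex-transitivity. The reduction chain of Lemmas~\ref{get-a-pair-lem}--\ref{main-lem} (``$5$-coloring $\rightsquigarrow$ one with a pair $\rightsquigarrow$ one with a triple $\rightsquigarrow$ one with a good $4$-template'') is a long case analysis in which every local picture is \emph{the} $6$-regular triangulation; for a general locally planar graph the local picture is an arbitrary planar patch with possibly high-degree vertices, so each case must be re-derived (again, presumably by discharging), and it is not clear a priori that a robust analogue of ``every triple lies in $12$ good $4$-templates'' survives when degrees vary. Worst of all is (iii). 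Large edge-width is not known to force $4$-colorability (there is no four-color theorem for surfaces), so in general one must face graphs $G$ with $\chi(G)=5$, for which ``all $5$-colorings equivalent'' would be a locally planar strengthening of Meyniel's theorem on planar $5$-colorings --- except that Meyniel's theorem is vacuous when $\chi=5$ (planar graphs are $4$-colorable), so there is no planar precedent to imitate. Equivalently, one would need that identifying a single common color class (or boundedly many) makes the graph $4$-degenerate; since the average degree of a surface graph sits at or just above $6$, this follows from no counting bound and demands new structural input. I expect step (iii) --- a Meyniel-type degeneracy reduction for $5$-chromatic locally planar graphs --- to be the principal obstacle; step (i) is essentially off the shelf, and the local machinery of (ii), though laborious, is ``only'' a matter of discharging.
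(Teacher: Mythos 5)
The statement you were assigned is Conjecture~4 of the paper: it is explicitly listed among open questions in Section~\ref{ext-sec}, and the paper gives no proof of it. There is therefore nothing to compare against. Your write-up correctly recognizes this (``the reason this is only conjectured'') and is, appropriately, a roadmap rather than a proof, so I will assess it as such.

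Your diagnosis of the obstacles is sound and matches the implicit reasons the authors leave this open. You correctly identify that Lemma~\ref{switch-gen-lem} with $k=4$ needs $\chi(G)\le 4$, which is not guaranteed on general surfaces even at large edge-width (Thomassen's theorem caps $\chi$ at $5$, not $4$; and there are $5$-chromatic triangulations of higher-genus surfaces with arbitrarily large edge-width), so the $\chi(G)=5$ case is a genuine new difficulty with no planar Meyniel-type precedent. You also correctly note that Lemmas~\ref{good-template-lem} and~\ref{loc-help-lem}, the pair/triple/template reduction chain, and the vertex-transitivity that lets the authors speak of ``the'' good $4$-templates through a triple, all lean hard on $6$-regularity; replacing them for arbitrary locally planar patches would require discharging-style arguments that have not been carried out. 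Two small cautions: first, the sparsity bound you quote should read $|E(H)|\le 3|V(H)|-6$ for patches that embed in a disk (the $+O_S(1)$ genus correction only enters globally, not for the contractible neighborhoods that local planarity provides), which is actually slightly better news than you suggest for building degeneracy orders of contracted patches; second, the back-of-the-envelope count you gesture at for $\chi(G)=5$ (identify one color class, hope for $4$-degeneracy) does not close even heuristically, as you note, so any eventual proof for that case likely needs a structural theorem about near-triangulations of surfaces at large edge-width rather than a counting argument. None of this is a flaw in your proposal; it is an honest and essentially accurate account of why the conjecture remains open.
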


\section*{Acknowledgments}
Thanks to Marthe Bonamy for suggesting this problem.
Thanks to Bojan Mohar for helpful comments on the history of the classification
of 6-regular toroidal graphs, as well as for bringing our attention to his
construction that we mentioned in Section~\ref{ext-sec}.

\bibliographystyle{siam}
{\footnotesize{\bibliography{refs}}
\end{document}